\documentclass[twoside,reqno]{amsart}
\usepackage{amssymb,amsmath,amsfonts,amsthm}
\usepackage{enumerate}
\usepackage{amsfonts}
\usepackage{array}
\usepackage{color}

\setlength{\topmargin}{0cm}
\textwidth=16cm
\oddsidemargin=0pt
\evensidemargin=0pt
\textheight=22cm

\newtheorem{theorem}{Theorem}
\newtheorem{remark}[theorem]{Remark}

\newtheorem{proposition}[theorem]{Proposition}
\newtheorem{lemma}[theorem]{Lemma}

\newcommand{\B}{{\mathbb B}}

\newcommand{\R}{{\mathbb R}}

\newcommand{\HH}{\mathbb H}

\def\dt{\partial_{t}}

\def\div{{\rm div\, }}

\def\u{{\bf u}}
\def\B{{\bf B}}

\def\vphi{\varphi}
\def\eps{\varepsilon}

\def\th{\theta}

\begin{document}

\title[Current-vortex sheets]{Approximate current-vortex sheets\\ near the onset of instability}
\author[A.~Morando] {Alessandro Morando}
\address{DICATAM, Sezione di Matematica, \newline \indent
Universit\`a di Brescia,\newline \indent Via Valotti, 9, 25133 BRESCIA, Italy}
\email{alessandro.morando@unibs.it, paolo.secchi@unibs.it, paola.trebeschi@unibs.it}

\author[P.~Secchi]{Paolo Secchi}
%\address{Dipartimento di Matematica, Facolt\`a di Ingegneria,\newline \indent
%Via Valotti, 9, 25133 BRESCIA, Italy}
%\email{paolo.secchi@ing.unibs.it}

\author[P.~Trebeschi]{Paola Trebeschi}
%\address{Dipartimento di Matematica, Facolt\`a di Ingegneria,\newline \indent
%Via Valotti, 9, 25133 BRESCIA, Italy}
%\email{paola.trebeschi@ing.unibs.it}

\date{\today}

\begin{abstract}
The paper is concerned with the free boundary problem for 2D current-vortex sheets in ideal incompressible
magneto-hydrodynamics near the transition point between the linearized stability and instability.
In order to study the dynamics of
the discontinuity near the onset of the instability, Hunter and Thoo \cite{hunter-thoo} have introduced an asymptotic quadratically nonlinear integro-differential equation for the amplitude of small perturbations of the planar discontinuity. We study such amplitude equation and prove its nonlinear well-posedness under a stability condition given in terms of a longitudinal strain of the fluid along the discontinuity.
\end{abstract}

\subjclass[2010]{35Q35, 76E17, 76E25, 35R35, 76B03.}
\keywords{{Magneto-hydrodynamics, incompressible fluids, current-vortex sheets, interfacial stability and instability}}

\thanks{{The authors are supported by the national research project PRIN 2012 \lq\lq Nonlinear Hyperbolic Partial Differential Equations, Dispersive and Transport Equations: theoretical and applicative aspects\rq\rq.}}

\maketitle

\section{Introduction and main results}\label{int}

We consider the equations of 2-dimensional incompressible magneto-hydrodynamics (MHD)
\begin{equation}
\label{mhd1}
\begin{cases}
\partial_t \u +\nabla \cdot
(\u \otimes \u-\B\otimes \B) +\nabla q =0 \, ,
\\
\partial_t \B -\nabla \times
(\u\times \B) =0 \, ,
\\
\div  \u=0\, ,\;\div \B=0\, \qquad\qquad \text{in }(0,T)\times\R^2,
\end{cases}
\end{equation}
where
$\u=(u_1,u_2)$ denotes the velocity field
and
$\B=(B_1,B_2)$ the magnetic field,
$p$ is the
pressure, $q= p+\frac{1}{2}|\B|^2$ the
total pressure
(for simplicity the density $\rho\equiv1$).

Let us consider {current-vortex sheets} solutions of \eqref{mhd1} (also called \lq\lq tangential discontinuities\rq\rq), that is weak solutions that are smooth on either side of a smooth hypersurface
$$\Gamma(t)=\{y=f(t,x)\}, \qquad  \mbox{where } t\in[0,T], \, (x,y)\in\R^2,\;
$$
and such that at $\Gamma(t)$ satisfy the boundary conditions
\begin{equation}
\label{bc}
\dt f =\u^\pm \cdot N \, ,\quad \B^\pm \cdot N=0 \, ,\quad [q]=0  \, ,
\end{equation}
with $N:=(-\partial_x f, 1)$. In \eqref{bc} $(\u^\pm,\B^\pm,q^\pm)$ denote the values of $(\u,\B,q)$ on the two sides of $\Gamma (t)$, and
$[q]=q^+_{|\Gamma}-q^-_{|\Gamma}$ the jump across
$\Gamma (t)$.

From \eqref{bc} the discontinuity front $\Gamma (t)$
is a tangential discontinuity, namely the plasma does not flow through the discontinuity front and the magnetic field is tangent to $\Gamma (t)$.
The possible jump of the tangential velocity and tangential magnetic field gives a concentration of current and vorticity on the front $\Gamma (t)$.
{Current-vortex sheets are fundamental waves in MHD and play an important role in plasma physics and astrophysics. The existence of current-vortex sheets solutions is known for compressible fluids \cite{ChenWang,trakhinin09arma}, but, as far as we know, is still an open problem for incompressible fluids, see \cite{cmst,morandotrakhinintrebeschi} for partial results.}

The necessary and sufficient linear stability condition for planar (constant coefficients) current-vortex
sheets was found a long time ago, see \cite{axford,michael,syrovatskii}.
%%%
To introduce it, let us consider a stationary solution of \eqref{mhd1}, \eqref{bc} with interface located at $\{y=0\}$ given by the constant states
\begin{equation}
\begin{array}{ll}\label{constant}
\u^\pm=(U^\pm,0)^T, \qquad \B^\pm=(B^\pm,0)^T
\end{array}
\end{equation}
in the $x$-direction.
The {necessary} and {sufficient stability condition} for the stationary solution is
\begin{equation}
\label{syrovatskii}
|U^+-U^-|^2 < 2 \, \Big( |B^+| ^2+ |B^-|^2 \Big) \, ,
\end{equation}
see \cite{axford,michael,syrovatskii}.
%%%
Equality in \eqref{syrovatskii} corresponds to the transition to
{\it violent} instability, i.e. {ill-posedness} of the linearized problem.

Let $U=(U^+,U^-), B=(B^+,B^-)$ and define
\[
\Delta(U,B):= \frac12 \, \Big( |B^+| ^2+ |B^-|^2 \Big) -\frac14|U^+-U^-|^2.
\]
According to \eqref{syrovatskii}, stability/instability occurs when $\Delta(U,B)\gtrless0$.

Hunter and Thoo investigated in \cite{hunter-thoo} the transition to instability when $\Delta(U,B)=0$.
Assume that $U^\pm,B^\pm$ depend on a small positive parameter $\eps$ and
\[
U^\pm=U^\pm_0+\eps U^\pm_1+O(\eps^2), \qquad
B^\pm=B^\pm_0+\eps B^\pm_1+O(\eps^2)
\]
as $\eps\to0^+$, where
\[
\Delta(U_0,B_0)=0\,.
\]
Then
\begin{equation}
\begin{array}{ll}\label{Delta}

\Delta(U,B)=\eps\mu+O(\eps^2)
\end{array}
\end{equation}
as $\eps\to0^+$, where
\[
\mu=B^+_0B^+_1+B^-_0B^-_1 - \frac12\left( U^+_0-U^-_0  \right)\left( U^+_1-U^-_1  \right)
\,.
\]
From \eqref{Delta}, $\mu$ plays the role of a scaled bifurcation parameter:
for small $\eps>0$, if $\mu>0$ the stationary solution \eqref{constant} is linearly stable,
while if $\mu<0$, it is linearly unstable.

It is proved in \cite{hunter-thoo} that the perturbed location of the interface  has the asymptotic expansion
\begin{equation*}
\begin{array}{ll}\label{}
y=f(t,x;\eps)=\eps\vphi( \tau,\th ) + O(\eps^{3/2})  \qquad \mbox{as } \eps\to0^+,
\end{array}
\end{equation*}
where $\tau=\eps^{1/2} t$ is a \lq\lq slow\rq\rq time variable and
$\th=x-\lambda_0 t$  is a new spatial variable in a reference frame moving with the surface wave, $\lambda_0=( U^+_0+U^-_0  )/2$.

As shown in \cite{hunter-thoo}, after a rescaling, and writing again $(t,x)$ for $(\tau,\th)$, the first order term $\vphi$ satisfies the quadratically nonlinear amplitude equation
\begin{equation}\label{onde_integro_diff}
\varphi_{tt}-\mu\varphi_{xx}=\left(\frac12\mathbb H[\phi^2]_{xx}+\phi\varphi_{xx}\right)_{\!\!x}\,,\qquad\phi=\mathbb H[\varphi]\,,
\end{equation}
where the unknown is the scalar function $\varphi=\varphi(t,x)$, whereas $\mathbb H$ denotes the Hilbert transform with respect to $x$.

\eqref{onde_integro_diff} is an integro-differential equation of order two: in fact, it may also be written as
\begin{equation*}
\begin{array}{ll}\label{equ1bis}
\vphi_{tt}-\mu\vphi_{xx} = \left(  [ \HH;\phi
]\partial_x  \phi
_{x} + \HH[\phi
^2_x]\right)_x \,,
\end{array}
\end{equation*}
where $[ \HH;\phi
]\partial_x$ is a pseudo-differential operator of order zero.
In \cite{hunter-thoo} the authors discuss the linearized well-posedness of \eqref{onde_integro_diff}. Linearizing the operator
\[
\mathbb L[\varphi]:=\varphi_{tt}-\mu\varphi_{xx}-\left( \left[\mathbb H  \,;\phi
\right]\phi
_{xx} + \mathbb H[\phi
^2_x] \right)_{x}\,
\]
about a given basic state $\vphi_0$ gives
\begin{equation}\label{linearizzata}
\mathbb{L}^\prime[\varphi_0]\varphi^\prime=\varphi^\prime_{tt} - {\left(\mu-2\phi
_{0,x}\right)}\varphi^\prime_{xx}\\
-2\left[\mathbb H\,;\phi
_{0,x}\right]\phi
^\prime_{xx}-2\mathbb H[\phi
_{0,xx}\phi
^\prime_x]\\
+\big(\left[\phi
^\prime;\mathbb H\right]\phi
_{0,xx}+{ \left[\phi
_0;\mathbb H\right]\phi
^\prime_{xx}\big)_x}\,,
\end{equation}
where
$\phi
'=\HH[\vphi'], \phi
_0=\HH[\vphi_0]$.
{Assume that the last term in \eqref{linearizzata} may be disregarded, even if of order 2 in $\vphi'$.}
When
\begin{equation}
\begin{array}{ll}\label{extstab2}
\mu-2\phi
_{0,x}<0
\end{array}
\end{equation}
the operator $\mathbb{L}^\prime[\varphi_0]$ is elliptic and \eqref{onde_integro_diff} is locally linearly ill-posed in any Sobolev space.
On the contrary, when
\begin{equation}
\begin{array}{ll}\label{extstab}

\mu-2\phi
_{0,x}>0

\end{array}
\end{equation}
the operator $\mathbb{L}^\prime[\varphi_0]$ is hyperbolic and \eqref{onde_integro_diff} is locally linearly well-posed. In a sense we can think of  \eqref{onde_integro_diff} as a nonlinear perturbation of the wave equation.
In \cite{hunter-thoo} the reader may also find a physical explanation of condition \eqref{extstab2} yielding the linearized ill-posedness, or alternatively \eqref{extstab} for well-posedness, which is given in terms of a longitudinal strain of the fluid along the discontinuity.

For simplicity, in the sequel it is assumed that $\varphi^\prime$ and $\varphi_0$ are periodic functions in $x$ (cf. Theorem \ref{nonlin_th} below). In this case, the periodicity of $\varphi_0$ implies that $\varphi_{0, x}$ has spatial mean equal to zero; since $\varphi_0$ and $\varphi_{0,x}$ are also real-valued then $\phi_{0,x}=\mathbb H[\varphi_{0, x}]$ is still real-valued with zero spatial mean (see the results collected in the next sections \ref{molt_fourier}, \ref{trasf_hilbert}). Therefore $\phi_{0, x}$ (if not identically zero\footnote{Because the spatial mean of $\varphi_{0}$ is zero, $\phi_{0, x}$ identically zero should imply that $\varphi_{0}$ is identically zero too.}) should attain either positive or negative values; consequently inequality \eqref{extstab} yields $\mu>0$ (providing {\it linear stability} of \eqref{onde_integro_diff}), while the opposite inequality implies $\mu<0$ (which gives {\it linear instability}). It is therefore somehow natural to regard \eqref{extstab} as a {\it stability condition}, under which we investigate the {\it nonlinear well-posedness} of the equation \eqref{onde_integro_diff}.

%%%%%%%%%%QUI
It is interesting to observe that the same quadratic operator of \eqref{onde_integro_diff} appears in the first order nonlocal amplitude equation
\begin{equation}\label{amplie}
\varphi_{t}+\frac12\mathbb H[\phi^2]_{xx}+\phi\varphi_{xx}=0\,,\qquad\phi=\mathbb H[\varphi]\,,
\end{equation}
for nonlinear Rayleigh waves  \cite{hamilton-et-al} and surface waves on current-vortex sheets and plasma-vacuum interfaces in incompressible MHD \cite{ali-hunter,ali-hunter-parker,secchi-quart}.
{Equation \eqref{amplie} is considered a canonical model equation for nonlinear surface wave solutions of hyperbolic conservation laws, analogous to the inviscid Burgers equation for bulk waves.}

%%%%%%%%%%%%%%%%%%%%%%%%%%%%%%%%%%%%%%%%
%%%%%%%%%%%%%%%%%%%%%%%%%%%%%%%%%%%%%%%%
%%%%%%%%%%%%%%%%%%%%%%%%%%%%%%%%%%%%%%%%
%%%%%%%%%%%%%%%%%%%%%%%%%%%%%%%%%%%%%%%%
\bigskip
\bigskip

%%%%%%%%%%%%%%%%%%%%%%%%%%%%%%%%%%%%%%%%
%%%%%%%%%%%%%%%%%%%%%%%%%%%%%%%%%%%%%%%%
%%%%%%%%%%%%%%%%%%%%%%%%%%%%%%%%%%%%%%%%
%We are concerned with the following integro-differential equation
%\begin{equation}\label{{onde_integro_diff}}
%\varphi_{tt}-\mu\varphi_{xx}=\left(\frac12\mathbb H[\phi^2]_{xx}+\phi\varphi_{xx}\right)_{\!\!x}\,,\qquad\phi=\mathbb H[\varphi]\,,
%\end{equation}
%where the unknown is the scalar function $\varphi=\varphi(t,x)$ of the time $t\in\mathbb R^+$ and the space variable $x$, ranging on the one-dimensional torus $\mathbb T$ (that is $\varphi$ is periodic in $x$), whereas $\mathbb H$ denotes the Hilbert transform with respect to $x$ and $\mu$ is a constant.
%
%The equation \eqref{onde_integro_diff} was derived by Hunter--Thoo \cite{hunter-thoo} as an asymptotic equation for the motion of a tangential discontinuity in ideal incompressible magneto-hydrodynamics (MHD), near a critical value of the magnetic field where the discontinuity loses stability. Then the unknown $\varphi=\varphi(t,x)$ represents the location of the discontinuity .... Here the constant $\mu$ represents a bifurcation parameter: as $\mu$ decreases to zero, the linearization of \eqref{onde_integro_diff} at $\varphi=0$ changes type from a hyperbolic wave equation to an elliptic evolution equation, corresponding to the onset of a Hadamard instability in the linearized theory.

In this paper we are mainly interested in the nonlinear well-posedness of \eqref{onde_integro_diff} under assumption \eqref{extstab}. More specifically, we will study the local-in-time existence of solutions to the initial value problem for  \eqref{onde_integro_diff}  with sufficiently smooth initial data
\begin{equation}\label{id}
\varphi_{\vert\,t=0}=\varphi^{(0)}\,,\qquad \partial_t\varphi_{\vert\,t=0}=\varphi^{(1)}\,,
\end{equation}
satisfying the following ``stability'' condition
\begin{equation*}\label{stability_nl}
\mu-2\phi^{(0)}_{x}>0\,,\qquad\phi^{(0)}:=\mathbb H[\varphi^{(0)}]\,,
\end{equation*}
which  must be understood as a {\it smallness} assumption on the size of the initial data $\varphi^{(0)}$ in \eqref{id}.

For the sake of convenience, in the paper the unknown $\varphi=\varphi(t,x)$ is a scalar function of  the time $t\in\mathbb R^+$  and the space variable $x$, ranging on the one-dimensional torus $\mathbb T$ (that is $\varphi$ is periodic in $x$). For all notation we refer to the following Section \ref{prbt}.
%%%%
The main result of the paper is given by the following theorem.
%%%
\begin{theorem}\label{nonlin_th}
\begin{itemize}
\item[(1)] Assume that $\varphi^{(0)}\in H^{11}(\mathbb T)$, $\varphi^{(1)}\in H^{10}(\mathbb T)$ and
\begin{equation}\label{sign-cond}
\mu- 2\mathbb{H}[\varphi^{(0)}]_x\geq \delta>0 \quad {\rm in}\,\,\mathbb{T},
\end{equation}
with some positive constant $\delta$.
Then there exists $T>0$, depending only on $\Vert\varphi^{(0)}\Vert_{H^{11}(\mathbb T)}$, $\Vert\varphi^{(1)}\Vert_{H^{10}(\mathbb T)}$ and $\delta$, such that the initial value problem \eqref{onde_integro_diff}, \eqref{id} with initial data $\varphi^{(0)}$, $\varphi^{(1)}$ admits a unique solution $\varphi$ on $[0,T]$ satisfying
    \begin{equation*}%\label{sol}
    \varphi\in L^2(0, T; H^{9}(\mathbb T))\cap H^1(0, T; H^{8}(\mathbb T))\cap H^2(0, T; H^{7}(\mathbb T))\,,
    \end{equation*}
\begin{equation*}
\mu- 2\mathbb{H}[\varphi]_x\geq \delta/2 \quad {\rm in}\,\,[0,T]\times\mathbb{T}\, .
    \end{equation*}
\item[(2)] If $\nu>10$ and $\varphi^{(0)}\in H^{\nu+1}(\mathbb T)$, $\varphi^{(1)}\in H^{\nu}(\mathbb T)$ satisfy condition \eqref{sign-cond} then the solution $\varphi$ of \eqref{onde_integro_diff}, \eqref{id} with initial data $\varphi^{(0)}$, $\varphi^{(1)}$, considered in the statement (1), satisfies
    \begin{equation*}%\label{reg_sol}
    \varphi\in L^2(0, T; H^{\nu-1}(\mathbb T))\cap H^1(0, T; H^{\nu-2}(\mathbb T))\cap H^2(0, T; H^{\nu-3}(\mathbb T))\,.
    \end{equation*}
\end{itemize}
\end{theorem}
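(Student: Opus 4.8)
The plan is to prove local-in-time existence and uniqueness by a standard energy method combined with a continuation (continuous-induction) argument for the quasilinear wave-type equation \eqref{onde_integro_diff}, treating the right-hand side as a quasilinear perturbation of the linear wave operator $\partial_t^2-(\mu-2\phi_{0,x})\partial_x^2$. First I would set up a \emph{linearized problem}: given a basic state $\varphi_0$ in the energy class with $\mu-2\mathbb H[\varphi_0]_x\geq\delta/2$, solve the linear equation obtained by freezing the principal coefficient, i.e. $\varphi^\prime_{tt}-(\mu-2\phi_{0,x})\varphi^\prime_{xx}=F$, with the lower-order pseudodifferential terms (the commutators $[\mathbb H;\phi_{0,x}]$, $\mathbb H[\phi_{0,xx}\varphi^\prime_x]$, etc., which by the results in Sections \ref{molt_fourier}, \ref{trasf_hilbert} are of order $\leq 1$ in $\varphi^\prime$) moved to $F$ along with the quadratic right-hand side evaluated at $\varphi_0$. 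For this linear problem I would derive energy estimates in $H^s$: multiply by $\partial_t\varphi^\prime$, integrate by parts in $x$, and use the positivity of $\mu-2\phi_{0,x}$ to control $\|\partial_t\varphi^\prime\|_{L^2}^2+\|\partial_x\varphi^\prime\|_{L^2}^2$; then commute $\partial_x^k$ through the equation and repeat, paying attention that the commutator $[\partial_x^k,\mu-2\phi_{0,x}]\partial_x^2\varphi^\prime$ and the Hilbert-transform commutators are bounded in $L^2$ by $\|\varphi_0\|_{H^{s}}\|\varphi^\prime\|_{H^{s}}$ (Moser-type / commutator estimates on $\mathbb T$). This yields an a priori bound $\|\varphi^\prime\|_{C([0,T];H^s)}+\|\partial_t\varphi^\prime\|_{C([0,T];H^{s-1})}\leq C(\|\varphi_0\|_{H^s})\big(\|\varphi^{(0)}\|_{H^s}+\|\varphi^{(1)}\|_{H^{s-1}}+T\cdot(\dots)\big)$, and the time-regularity $\varphi^\prime\in H^2(0,T;H^{s-2})$ comes from reading $\varphi^\prime_{tt}$ off the equation.

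Next I would run a \emph{fixed-point / iteration scheme}: define a map $\varphi_0\mapsto\varphi^\prime$ by solving the linearized problem, and show it maps a suitable ball $\mathcal B_R=\{\varphi: \|\varphi\|_{C([0,T];H^{9})}\cap\dots\leq R,\ \mu-2\mathbb H[\varphi]_x\geq\delta/2\}$ into itself for $T$ small, and is a contraction in a lower-norm metric (say $C([0,T];H^{8})$). The ball stays invariant because, by the energy estimate and choosing $R\sim 2(\|\varphi^{(0)}\|_{H^{11}}+\|\varphi^{(1)}\|_{H^{10}})$ and then $T$ small depending on $R$ and $\delta$, the solution cannot leave $\mathcal B_R$; the sign condition $\mu-2\mathbb H[\varphi]_x\geq\delta/2$ is propagated because $\partial_t\mathbb H[\varphi]_x$ is controlled in $L^\infty$ (via Sobolev embedding $H^{s}\hookrightarrow C^1$ for $s$ large, here $s=11$ is comfortably enough) so $\mathbb H[\varphi]_x$ stays within $\delta/2$ of its initial value for short time. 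Contraction follows from the same energy identity applied to the difference of two iterates, with the loss-of-one-derivative in the nonlinearity absorbed because the difference is estimated one notch below the invariant-ball norm. Uniqueness is the contraction statement (or a direct energy estimate on the difference of two solutions). The regularity-persistence claim in part (2) is then obtained by the same linear estimates run at level $s=\nu+1$ rather than $s=11$: since the solution already exists on $[0,T]$ and the bootstrap of higher norms is linear in the top-order part, no new smallness of $T$ is needed, so the same $T$ works.

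The main obstacle I expect is the \emph{quasilinear nature combined with the nonlocal (Hilbert-transform) structure}: the coefficient $\mu-2\mathbb H[\varphi]_x$ of the top-order term depends on $\varphi$ itself through a nonlocal operator, so when I commute derivatives past it I must control commutators of the form $[\partial_x^k,\mathbb H[\varphi]_x]\partial_x^2\varphi$, and the right-hand side of \eqref{onde_integro_diff} genuinely contains three $x$-derivatives on $\phi=\mathbb H[\varphi]$ (terms like $(\tfrac12\mathbb H[\phi^2]_{xx})_x$ and $(\phi\varphi_{xx})_x=\phi_x\varphi_{xx}+\phi\varphi_{xxx}$), so naively one loses a derivative. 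The resolution — which is why the statement allocates $\varphi\in H^{11}$ initial data but only asks for $H^9$ in $x$ for the solution, i.e.\ a genuine loss — is that the worst top-order term $\phi\varphi_{xxx}$ must be \emph{incorporated into the principal operator} (it combines with $\mu\varphi_{xx}$ after one integration by parts, or is handled by recognizing $\phi\varphi_{xxx}=(\phi\varphi_{xx})_x-\phi_x\varphi_{xx}$ so that the effective principal symbol is exactly $\mu-2\phi_x$), after which the boundedness of $[\mathbb H;\cdot]$ as an order-zero operator (Section \ref{trasf_hilbert}) keeps everything else lower-order; one then does the energy estimate at one derivative less than the data, which is the standard quasilinear trade-off and accounts for the indices $11\to 9$. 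A secondary technical point is carefully citing or proving the Kato–Ponce / commutator estimates for $\mathbb H$ and for products on $\mathbb T$, and the Moser inequality $\|fg\|_{H^s}\lesssim\|f\|_{L^\infty}\|g\|_{H^s}+\|f\|_{H^s}\|g\|_{L^\infty}$, which is where the Sobolev embedding forces $s$ to be moderately large.
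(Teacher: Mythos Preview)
Your approach has a genuine gap: the standard fixed-point / Picard iteration you propose will not close, because the linearized equation exhibits a loss of \emph{two} derivatives from the basic state $\varphi_0$ to the solution $\varphi'$, not one. Concretely, the linearized operator (written as \eqref{operatore_lin1} in the paper) contains the term $\big([\phi';\mathbb H]\phi_{0,xx}+[\phi_0;\mathbb H]\phi'_{xx}\big)_x$. After expansion this produces pieces like $\phi'\,\mathbb H[\phi_{0,xxx}]$ and, more seriously, the commutator estimates needed to close an $H^m$-energy bound on $\varphi'$ force control of $\varphi_{0,x}$ in $H^{m+2}$ (see the terms $G_3,G_4$ in the paper's derivation of the tame estimate in Section \ref{stima_tame}). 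Thus the map $\varphi_0\mapsto\varphi'$ sends $H^{m+3}$ to $H^{m+1}$, and the usual quasilinear trick of proving contraction one derivative below the invariant-ball norm cannot absorb a two-step loss. Your reading of the indices $11\to 9$ as ``the standard quasilinear trade-off'' is a misdiagnosis: in ordinary quasilinear wave theory one obtains $\varphi\in C([0,T];H^s)$ from $\varphi^{(0)}\in H^s$ with no loss; the two-derivative drop here is a symptom that the problem is genuinely outside that framework.

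The paper's proof proceeds exactly as you outline up through the $L^2$ energy estimate for the linearized problem (multiplication by $\varphi'_t$, integration by parts, delicate treatment of the nonlocal second-order piece $\mathcal I_2$ using Hilbert-transform identities). But at the next stage it explicitly records the tame estimate
\[
\gamma\big\{\|\varphi'_t\|_{L^2_\gamma(\mathbb R;H^m)}^2+\|\varphi'_x\|_{L^2_\gamma(\mathbb R;H^m)}^2\big\}
\le\frac{C}{\gamma}\big\{\|\varphi_{0,x}\|_{L^\infty(\mathbb R;H^{m+2})}^2\|g\|_{L^2_\gamma(\mathbb R;H^2)}^2+\|g\|_{L^2_\gamma(\mathbb R;H^m)}^2\big\},
\]
observes that the $H^{m+2}$-dependence on $\varphi_{0,x}$ blocks any contraction argument, and then solves the nonlinear problem via Nash--Moser (Section \ref{nlpb}). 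The specific regularity thresholds $H^{11},H^{10}$ for the data and $H^9,H^8,H^7$ for the solution come from the Nash--Moser bookkeeping ($m_0=4$, $s=1$, $s'=3$, $m'=7$), not from a quasilinear energy closure. Part (2) then follows from the higher-regularity clause of the Nash--Moser theorem, with the same $T$ because the smallness condition is imposed only on the $Y_9$-norm of the defect $F^a$.
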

%%%
%%%
%%%
\begin{remark}\label{rmk:4}
It is worth pointing out that the time interval of the existence of the solution $\varphi=\varphi(t,x)$ to the initial value problem \eqref{onde_integro_diff}, \eqref{id}, computed from the initial data by Theorem \ref{nonlin_th}, is the same in both the statements (1) and (2). Even though in (2) the initial data $\varphi^{(0)}$, $\varphi^{(1)}$ have an additional Sobolev regularity $\nu>10$, with respect to the minimal regularity that is required in (1), the final time $T$ depends on those data only through the lower order norms $\Vert\varphi^{(0)}\Vert_{H^{11}(\mathbb T)}$, $\Vert\varphi^{(1)}\Vert_{H^{10}(\mathbb T)}$, see Subsection \ref{proof-theorem}.
\end{remark}

The paper is organized as follows. After the following Section \ref{prbt} about notations and basic tools, in Section \ref{equazione_lineare} we study the linearized equation about a given reference state $\vphi_0$. The main result of this section is the basic energy estimate \eqref{stima_apriori_1} obtained by the standard approach of multiplication and integration by parts. Even if the approach is quite natural, in our opinion the result is not at all obvious. In fact, it follows from a very careful analysis, in particular the study of some critical terms containing derivative of higher order. Here we use in a crucial way some fine properties of the Hilbert transform and new commutator estimates.

In Section \ref{stima_tame} we prove a tame estimate in Sobolev spaces of any order for the solution $\vphi'$ to the linearized equation. The proof follows from the basic energy a priori estimate \eqref{stima_apriori_1} and suitable commutator estimates involving the Hilbert transform and derivatives of higher order. An important feature is the following one. In the basic energy a priori estimate \eqref{stima_apriori_1}, there is no loss of regularity in the sense that the $L^2$-norm of the source term $g=\mathbb{L}^\prime[\varphi_0]$ controls the $L^2$-norms of both $\vphi_t'$ and $\vphi_x'$ (as for the standard wave equation). Thus, one would hope to prove an estimate in Sobolev spaces of higher order again with no loss of derivative, and consequently to solve the nonlinear problem by a standard method like the implicit function theorem or the contraction principle.

Unfortunately, here we find a serious difficulty. Our tame estimate \eqref{stima_tame_1} still has no loss of regularity from  the source term $g$ to the solution, but it contains the loss of 2 spatial derivatives in the inversion of the operator $\mathbb{L}^\prime[\varphi_0]$,
from the given basic state $\varphi_0$ to $\varphi^\prime$.

For this reason, we can't apply a standard method for the resolution of the nonlinear problem. Instead, \eqref{onde_integro_diff} is solved by applying the Nash-Moser's theorem. This is done in Section \ref{nlpb} where we give an equivalent formulation of the Cauchy problem for \eqref{onde_integro_diff} as an abstract equation in a suitable functional setting, and verify all the assumptions needed for the application of Nash-Moser's theorem.

Finally, in \ref{stima_commutatore} we prove our commutator estimates involving the Hilbert transform and give other useful estimates. In \ref{sec_N-M}, for reader's convenience, we recall the assumptions and the statement of the Nash-Moser's theorem.
%

%%%%%%%%%%%%%%%%%%%%%%%%%%%%%%%%%%%%%%%

\section{Preliminary results and basic tools}\label{prbt}
\subsection{Notations}\label{not}
Throughout the whole paper, the partial derivative of a function $f(t,x)$ with respect to $t$ or $x$ will be denoted appending to the function the subscript $t$ or $x$ as
\begin{equation*}
f_t:=\frac{\partial f}{\partial t}\,,\qquad f_x:=\frac{\partial f}{\partial x}\,.
\end{equation*}
(The notations $\partial_t f$, $\partial_x f$ will be also used.) Higher order derivatives in $(t,x)$ will be denoted by the repeated indices; for instance $f_{tt}$ and $f_{tx}$ will stand respectively for second order derivatives of $f$ with respect to $t$ twice and $t$, $x$.

Let $\mathbb T$ denote the one-dimensional torus defined as
\begin{equation*}%\label{torus}
\mathbb T:=\mathbb R/(2\pi\mathbb Z)\,,
\end{equation*}
that is the set of equivalence classes of real numbers with respect to the equivalence relation $\sim$ defined as
\begin{equation*}%\label{tilde}
x\sim y\qquad\mbox{if and only if}\qquad x-y\in 2\pi\mathbb Z\,.
\end{equation*}
It is customary to identify functions that are defined on $\mathbb T$ with $2\pi-$periodic functions on $\mathbb R$. According to this convention, it will be usual referring to $f:\mathbb T\rightarrow\mathbb C$ as a ``periodic function''.

All periodic functions $f:\mathbb T\rightarrow\mathbb C$ can be expanded in terms of Fourier series as
\begin{equation*}%\label{serie_fourier}
f(x)=\frac{1}{2\pi}\sum\limits_{k\in\mathbb Z}\widehat{f}(k)e^{ikx}\,,
\end{equation*}
where $\left\{\widehat{f}(k)\right\}_{k\in\mathbb Z}$ are the Fourier coefficients defined by
\begin{equation}\label{coeff_fourier}
\widehat{f}(k):=\int_{\mathbb T}f(x)e^{-ikx}\,dx\,,\qquad k\in\mathbb Z\,.
\end{equation}
For $1\le p\le +\infty$, we denote by $L^p(\mathbb T)$ the usual Lebesgue space of exponent $p$ on $\mathbb T$, defined as the set of (equivalence classes of) measurable functions $f:\mathbb T\rightarrow\mathbb C$ such that the norm
\begin{equation*}%\label{normaLp}
\Vert f\Vert_{L^p(\mathbb T)}:=
\begin{cases}\left(\int_{\mathbb T}\vert f(x)\vert^p\,dx\right)^{1/p}\,,\quad\mbox{if}\,\,p<+\infty\,\\  \mbox{ess sup}_{x\in\mathbb T}\vert f(x)\vert\,,\quad\mbox{if}\,\,p=+\infty
\end{cases}
\end{equation*}
is finite. We denote
\begin{equation*}
(f,g)_{L^2(\mathbb T)}:=\int_{\mathbb T}f(x)\overline{g(x)}\,dx
\end{equation*}
the inner product of two functions $f, g\in L^2(\mathbb T)$ ($\overline z$ denotes the conjugate of $z\in\mathbb C$).

For all $s\in\mathbb R$, $H^s(\mathbb T)$ will denote the Sobolev space of order $s$ on $\mathbb T$, defined to be the set of periodic functions\footnote{The word ``function'' is used here, and in the rest of the paper, in a wide sense. To be more precise, one should speak about ``periodic distributions'' on the torus, instead of ``periodic functions'', when dealing with real order Sobolev spaces. However, for the sake of simplicity, here we prefer to avoid the precise framework of distributions. We refer the reader to the monograph \cite{ruzhansky-turunen} for a thorough presentation of the periodic setting.} $f:\mathbb T\rightarrow\mathbb C$ such that
\begin{equation}\label{normaHs}
\Vert f\Vert_{H^s(\mathbb T)}^2:=\frac1{2\pi}\sum\limits_{k\in\mathbb Z}(1+|k|)^{2s}\vert\widehat{f}(k)\vert^2<+\infty\,.
\end{equation}
The function $\Vert\cdot\Vert_{H^s(\mathbb T)}$ defines a norm on $H^s(\mathbb T)$, associated to the inner product
\begin{equation*}%\label{prodottoHs}
(f,g)_{H^s(\mathbb T)}:=\frac1{2\pi}\sum\limits_{k\in\mathbb Z}(1+|k|)^{2s}\widehat{f}(k)\overline{\widehat{g}(k)}\,,
\end{equation*}
which turns $H^s(\mathbb T)$ into a Hilbert space.

Because of the relation between differentiation and Fourier coefficients, it is obvious that when $s$ is a positive integer $H^s(\mathbb T)$ reduces to the space of periodic functions $f:\mathbb T\rightarrow\mathbb C$ such that
\begin{equation*}
\partial_x^k f\in L^2(\mathbb T)\,,\quad\mbox{for}\,\,0\le k\le s
\end{equation*}
and
\begin{equation*}%\label{normaderivate}
\sum\limits_{k=0}^s\left\Vert \partial_x^k f\right\Vert_{L^2(\mathbb T)}
\end{equation*}
defines a norm in $H^s(\mathbb T)$ equivalent to \eqref{normaHs}\footnote{Even though the functions $f$ involved here depend on $x\in \mathbb T$ alone, the partial derivative notation $\partial_x^k:=\partial_x\dots\partial_x$ ($k$ times) is used just in order to be consistent with the notations adopted in the subsequent sections, where functions will also depend on time.}.

%For $m\in\mathbb N$, we denote by $W^{m,\infty}(\mathbb T)$ the space of measurable functions $f=f(x)$ that belong to $L^\infty(\mathbb T)$ together with their (weak) derivatives up to the order $m$; this space is equipped with the natural norm
%\begin{equation*}%\label{normaWm}
%\Vert f\Vert_{W^{m,\infty}(\mathbb T)}:=\max\limits_{0\le k\le m}\Vert \partial_x^k f\Vert_{L^\infty(\mathbb T)}\,.
%\end{equation*}
In the following, we are mainly concerned with real-valued periodic functions $f:\mathbb T\rightarrow\mathbb R$ with zero spatial mean, that is such that
\begin{equation*}%\label{media}
\int_{\mathbb T}f(x)\,dx=0\,.
\end{equation*}
For such functions the Fourier coefficients \eqref{coeff_fourier} obey the additional constraints
\begin{equation}\label{coeff_fourier_condizioni}
\widehat{f}(0)=0\,,\qquad \overline{\widehat{f}(k)}=\widehat{f}(-k)\,,\,\,\forall\,k\in\mathbb Z\,.
\end{equation}
In view of \eqref{coeff_fourier_condizioni}, for zero mean periodic functions on $\mathbb T$ a norm in $H^s(\mathbb T)$ equivalent to \eqref{normaHs} is provided by
\begin{equation*}%\label{normaHs_equiv}
\Vert f\Vert_{s}^2:=\frac1{2\pi}\sum\limits_{k\in\mathbb Z\setminus\{0\}}|k|^{2s}\vert\widehat{f}(k)\vert^2\,.
\end{equation*}
For $s\in\mathbb N$, from the well-known formula
\begin{equation*}
\widehat{\partial^s_xf}(k)=(ik)^s\widehat{f}(k)
\end{equation*}
and Parseval's identity it follows that
\begin{equation}\label{ident_norme}
\Vert f\Vert_{s}=\Vert\partial^s_x f\Vert_{L^2(\mathbb T)}\,.
\end{equation}

\vspace{.5cm}
Hereafter, we will deal with spaces of functions that depend even on time $t$. It will be convenient to regard real-valued functions $f=f(t,x)$, depending on time and space, as vector-valued functions of $t$ alone taking values in some Banach space $\mathcal X$ of functions depending on $x\in\mathbb T$. For technical reasons, the time variable $t$ will be allowed to run through the whole real line $\mathbb R_t$ (or even a real interval $(-\infty,T)$ for given $T>0$), and the integrability properties of functions in time will be referred to the {\it weighted} measure $e^{-\gamma t}\,dt$ on $\mathbb R_t$ (or $(-\infty,T)$), being $\gamma$ a real positive parameter.

For every Banach space $\mathcal X$ (with norm $\Vert\cdot\Vert_{\mathcal X}$) and $\gamma\ge 1$, we denote by $L^2_\gamma(\mathbb R; \mathcal X)$ the space of measurable functions $f:\mathbb R\rightarrow\mathcal X$, such that the real-valued function
\begin{equation*}
t\mapsto e^{-\gamma t}\Vert f(t)\Vert_{\mathcal X}=\Vert f_\gamma(t)\Vert_{\mathcal X}
\end{equation*}
is square integrable on $\mathbb R$; here it is set
\begin{equation}\label{f_gamma}
f_\gamma:=e^{-\gamma t}f\,.
\end{equation}
The space $L^2_\gamma(\mathbb R; \mathcal X)$ is provided with the norm
\begin{equation}\label{norma_l2gamma}
\Vert f\Vert_{L^2_\gamma(\mathbb R; \mathcal X)}^2:=\int_\mathbb R \Vert f_\gamma(t)\Vert_{\mathcal X}^2\,dt\,.
\end{equation}
In the sequel, the Banach space $\mathcal X$ will be always some Sobolev space $H^m(\mathbb T)$ of integer order $m$. When in particular $\mathcal X=L^2(\mathbb T)$ then the space $L^2_\gamma(\mathbb R; L^2(\mathbb T))$ reduces to be the set of functions $f=f(t,x)$, that are measurable both on $t$ and $x$ and such that $e^{-\gamma t}f$ is square integrable in $\mathbb R\times\mathbb T$; accordingly we set $L^2_\gamma(\mathbb R\times\mathbb T)=L^2_\gamma(\mathbb R; L^2(\mathbb T))$, and denote by $\Vert\cdot\Vert_{L^2_\gamma(\mathbb R\times\mathbb T)}$ the related norm in \eqref{norma_l2gamma}. For $\gamma=0$, the weighted spaces above reduce of course to the usual Lebesgue spaces that will be simply denoted as $L^2(\mathbb R;\mathcal X)$.

When $\mathcal X$ is an Hilbert space (that will be always the case in the following), being $\Vert\cdot\Vert_{\mathcal X}$ the norm associated to the natural inner product, then integration by parts yields that the following identity
\begin{equation*}%\label{derivata_gamma}
\Vert \partial_t f\Vert^2_{L^2_\gamma(\mathbb R;\mathcal X)}=\gamma^2\Vert f\Vert^2_{L^2_\gamma(\mathbb R; \mathcal X)}+\Vert\partial_t f_\gamma\Vert^2_{L^2(\mathbb R;\mathcal X)}
\end{equation*}
holds true for every $\gamma\ge 1$ and all sufficiently smooth functions $f$. In particular, it follows that
\begin{equation}\label{dis_gamma}
\gamma\Vert f\Vert_{L^2_\gamma(\mathbb R; \mathcal X)}\le \Vert\partial_t f\Vert_{L^2_\gamma(\mathbb R;\mathcal X)}\,.
\end{equation}
More in general for $k\in\mathbb N$ we set $H^k_\gamma(\mathbb R;\mathcal X)$ the space of measurable functions $f:\mathbb R\rightarrow\mathcal X$ such that
\begin{equation*}
\partial^j_t f\in L^2_\gamma(\mathbb R; \mathcal X)\,,\quad\mbox{for}\,\,j=0,\dots,k\,,
\end{equation*}
endowed with the weighted norm $\Vert\cdot\Vert_{H^k_\gamma(\mathbb R; \mathcal X)}$ defined by
\begin{equation}\label{normaHk}
\Vert f\Vert_{H^k_\gamma(\mathbb R; \mathcal X)}^2=\sum\limits_{j=0}^k\gamma^{2(k-j)}\Vert\partial^j_t f\Vert^2_{L^2_\gamma(\mathbb R; \mathcal X)}\,.
\end{equation}
Applying repeatedly the inequality \eqref{dis_gamma}, one can see that for every $k\in\mathbb N$, $\gamma\ge 1$ and $f\in H^k_\gamma(\mathbb R; \mathcal X)$ the following holds
\begin{equation*}
\Vert f\Vert_{H^k_\gamma(\mathbb R; \mathcal X)}\le \sqrt {k+1}\Vert\partial_t^k f\Vert_{L^2_\gamma(\mathbb R; \mathcal X)}\,.
\end{equation*}
The previous inequality shows that $f\mapsto \Vert \partial_t^k f\Vert_{L^2_\gamma(\mathbb R; \mathcal X)}$ defines a norm in $H^k_\gamma(\mathbb R; \mathcal X)$ equivalent (uniformly in $\gamma$) to the norm $\Vert\cdot\Vert_{H^k_\gamma(\mathbb R; \mathcal X)}$ defined in \eqref{normaHk}.

We denote by $L^\infty(\mathbb R; \mathcal X)$ the space of measurable functions $f:\mathbb R\rightarrow \mathcal X$ such that
$$
t\mapsto\Vert f(t)\Vert_{\mathcal X}
$$
is bounded in $\mathbb R$, provided with the norm
\begin{equation*}%\label{norma_inf}
\Vert f\Vert_{L^\infty(\mathbb R;\mathcal X)}:={\rm ess\,sup}_{t\in\mathbb R}\Vert f(t)\Vert_{\mathcal X}\,.
\end{equation*}
More in general, for $k\in\mathbb N$, $W^{k,\infty}(\mathbb R; \mathcal X)$ will be the space of measurable functions $f:\mathbb R\rightarrow \mathcal X$ such that
\begin{equation*}
\partial^j_t f\in L^\infty(\mathbb R; \mathcal X)\,,\quad\mbox{for}\,\,j=0,\dots,k\,,
\end{equation*}
with the norm
\begin{equation*}
\Vert f\Vert_{W^{k,\infty}(\mathbb R; \mathcal X)}:=\sup\limits_{0\le j\le k}\Vert\partial^j_t f\Vert_{L^\infty(\mathbb R; \mathcal X)}\,.
\end{equation*}
By replacing the real line with the real interval $(-\infty,T)$ for $T>0$ in all the above definitions, the vector-valued spaces $H^k_\gamma(-\infty,T;\mathcal X)$, $W^{k,\infty}(-\infty,T;\mathcal X)$ can be defined exactly in the same manner as before, with similar properties.
We recall the following vector-valued counterpart of the standard Sobolev Imbedding Theorem (see \cite{morando-trebeschi2013}).
\begin{lemma}\label{sobolev-imb}
For any $T>0$ and $\gamma\ge 1$, $H^1_\gamma(-\infty,T; \mathcal X)\hookrightarrow L^\infty(-\infty,T; \mathcal X)$ and there exists a positive constant $C_T$, independent of $\gamma$, such that
\begin{equation*}%\label{sobolev_ineq}
\Vert u\Vert_{L^\infty(-\infty, T;\mathcal X)}\le\frac{C_T}{\sqrt{\gamma}}\Vert u\Vert_{H^1_\gamma(-\infty, T; \mathcal X)}\,,\quad\forall\,u\in H^1_\gamma(-\infty,T;\mathcal X)\,.
\end{equation*}
\end{lemma}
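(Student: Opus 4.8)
The plan is to reduce the estimate to the classical one--dimensional Sobolev inequality after removing the exponential weight, and then to read off the gain $\gamma^{-1/2}$ from a $\gamma$--weighted Young inequality; throughout I use that $\mathcal X$ is a Hilbert space and that the $\gamma$--weighted norms are the ones defined by \eqref{f_gamma} and \eqref{normaHk}.

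Fix $\gamma\ge1$ and, by density, assume first that $u\in H^1_\gamma(-\infty,T;\mathcal X)$ is sufficiently smooth; set $v:=u_\gamma=e^{-\gamma t}u$. Since $\partial_t v=-\gamma v+(\partial_t u)_\gamma$, one has $v\in H^1(-\infty,T;\mathcal X)$, $\Vert v\Vert_{L^2(-\infty,T;\mathcal X)}=\Vert u\Vert_{L^2_\gamma(-\infty,T;\mathcal X)}$ and $\Vert\partial_t v\Vert_{L^2(-\infty,T;\mathcal X)}\le\gamma\Vert u\Vert_{L^2_\gamma(-\infty,T;\mathcal X)}+\Vert\partial_t u\Vert_{L^2_\gamma(-\infty,T;\mathcal X)}$, so that, reading off $\gamma^2\Vert u\Vert_{L^2_\gamma}^2\le\Vert u\Vert_{H^1_\gamma}^2$ and $\Vert\partial_t u\Vert_{L^2_\gamma}^2\le\Vert u\Vert_{H^1_\gamma}^2$ from \eqref{normaHk} with $k=1$,
\begin{equation*}
\gamma\,\Vert v\Vert_{L^2(-\infty,T;\mathcal X)}^2\le\gamma^{-1}\Vert u\Vert_{H^1_\gamma(-\infty,T;\mathcal X)}^2,\qquad \gamma^{-1}\Vert\partial_t v\Vert_{L^2(-\infty,T;\mathcal X)}^2\le 4\,\gamma^{-1}\Vert u\Vert_{H^1_\gamma(-\infty,T;\mathcal X)}^2 .
\end{equation*}

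Next I would run the scalar argument for $t\mapsto\Vert v(t)\Vert_{\mathcal X}$. As $\mathcal X$ is a Hilbert space, this map is locally absolutely continuous on $(-\infty,T)$ with $\frac{d}{dt}\Vert v(t)\Vert_{\mathcal X}^2=2\,\mathrm{Re}\,(v(t),\partial_t v(t))_{\mathcal X}$, and the right--hand side lies in $L^1(-\infty,T)$ by Cauchy--Schwarz; since also $\Vert v\Vert_{\mathcal X}^2\in L^1(-\infty,T)$, there is a sequence $s_n\to-\infty$ with $\Vert v(s_n)\Vert_{\mathcal X}\to0$, and integrating from $s_n$ to $t$ and letting $n\to\infty$ yields, for a.e.\ $t<T$,
\begin{equation*}
\Vert v(t)\Vert_{\mathcal X}^2=2\int_{-\infty}^{t}\mathrm{Re}\,(v(s),\partial_s v(s))_{\mathcal X}\,ds\le 2\,\Vert v\Vert_{L^2(-\infty,T;\mathcal X)}\,\Vert\partial_t v\Vert_{L^2(-\infty,T;\mathcal X)}\le\gamma\,\Vert v\Vert_{L^2(-\infty,T;\mathcal X)}^2+\gamma^{-1}\Vert\partial_t v\Vert_{L^2(-\infty,T;\mathcal X)}^2 ,
\end{equation*}
the last inequality being $2ab\le\gamma a^2+\gamma^{-1}b^2$. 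Inserting the two bounds from the previous step gives $\Vert v(t)\Vert_{\mathcal X}^2\le 5\,\gamma^{-1}\Vert u\Vert_{H^1_\gamma(-\infty,T;\mathcal X)}^2$ for a.e.\ $t<T$; taking the essential supremum over $t$, recalling $v=u_\gamma$, and finally removing the provisional smoothness of $u$ by a standard density argument, one obtains the imbedding $H^1_\gamma(-\infty,T;\mathcal X)\hookrightarrow L^\infty(-\infty,T;\mathcal X)$ together with the stated inequality, with $C_T=\sqrt5$ (independent of $\gamma$, and in fact of $T$).

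The only genuinely delicate point I expect is the rigorous justification, in the $\mathcal X$--valued setting, of the local absolute continuity of $t\mapsto\Vert v(t)\Vert_{\mathcal X}^2$, of the Leibniz formula for its derivative, and of the existence of a sequence $s_n\to-\infty$ along which $\Vert v(s_n)\Vert_{\mathcal X}\to0$; this calls for some care with Bochner integration and uses that both $v$ and $\partial_t v$ belong to $L^2(-\infty,T;\mathcal X)$. Everything else is the classical one--variable computation, the weight intervening only through $\gamma\Vert v\Vert_{L^2}^2=\gamma^{-1}\bigl(\gamma^2\Vert u\Vert_{L^2_\gamma}^2\bigr)$, which is exactly the mechanism producing the gain $\gamma^{-1/2}$.
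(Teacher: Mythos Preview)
Your computation is correct up to and including the bound $\Vert v(t)\Vert_{\mathcal X}^2\le 5\gamma^{-1}\Vert u\Vert_{H^1_\gamma}^2$, but the last step contains a slip: taking the essential supremum of $\Vert v(t)\Vert_{\mathcal X}=e^{-\gamma t}\Vert u(t)\Vert_{\mathcal X}$ over $t\le T$ bounds the \emph{weighted} quantity $\sup_{t\le T}e^{-\gamma t}\Vert u(t)\Vert_{\mathcal X}$, not $\Vert u\Vert_{L^\infty(-\infty,T;\mathcal X)}$ as defined in the paper (which carries no weight). Passing from one to the other costs a factor $e^{\gamma t}\le e^{\gamma T}$, so your argument actually yields $C_T=\sqrt5\,e^{\gamma T}$, which \emph{does} depend on~$\gamma$.

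This is not merely a loss in your estimate: the inequality with a $\gamma$--independent $C_T$ is false as written. For $\mathcal X=\mathbb R$ and $u(t)=e^{\beta t}$ with $\beta=(1+\sqrt2)\gamma>\gamma$ one checks that $u\in H^1_\gamma(-\infty,T;\mathbb R)$ and
\[
\Vert u\Vert_{L^\infty(-\infty,T)}^2=(\sqrt2-1)\,e^{2\gamma T}\,\gamma^{-1}\Vert u\Vert_{H^1_\gamma}^2,
\]
which is unbounded as $\gamma\to\infty$ for any fixed $T>0$. The paper itself does not prove the lemma but refers to \cite{morando-trebeschi2013}; in every place where it is applied (e.g.\ \eqref{sgn_restr}, \eqref{stima_dm-1tildephitt}) the parameter $\gamma$ has already been fixed and the functions in $X_m$ vanish for $t<0$, so a constant of the form $e^{\gamma T}$ is entirely adequate there. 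Once you restore the missing factor $e^{\gamma T}$ in your final step, your argument delivers exactly that.
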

%%%%%%%%%%%%%%%%%%%%%%%%%%%%%%%%%%%%%%%%%%%%%%%%%%%%%%%%%%%%%%%%%%%%%%%%%%%%%%%%%%%%%%%%%%%%%%%%%%%%%%%%%%%%%%%%%%%%%%%%%%%%%%%%%%%%%%%%%%%%%%%%%%%%%%%%%%%%%%%%%
\subsection{Some reminds on periodic Fourier multipliers}\label{molt_fourier}
For a given sequence of real (or complex) numbers $\{A(k)\}_{k\in\mathbb Z}$, we denote by $A$ the linear operator defined on periodic functions $f:\mathbb T\rightarrow\mathbb C$ by setting
\begin{equation}\label{operatore1}
Af(x):=\frac{1}{2\pi}\sum\limits_{k\in\mathbb Z}A(k)\widehat{f}(k)e^{ikx}\,,\qquad x\in\mathbb T\,,
\end{equation}
or equivalently, on the Fourier side, by its Fourier coefficients
\begin{equation}\label{operatore2}
\widehat{Af}(k)=A(k)\widehat{f}(k)\,,\qquad\forall\,k\in\mathbb Z\,.
\end{equation}
We refer to the sequence $\{A(k)\}_{k\in\mathbb Z}$ as the {\it symbol} of the operator $A$.

The following continuity result will be useful in the sequel.
\begin{proposition}\label{prop_molt_sobolev}
Let the sequence  $\{A(k)\}_{k\in\mathbb Z}$ satisfy the following assumption
\begin{equation}\label{stima_simbolo}
(1+\vert k\vert)^{-m}\vert A(k)\vert\le C\,,\qquad\forall\,k\in\mathbb Z\,,
\end{equation}
with suitable constants $m\in\mathbb R$, $C>0$; then the operator $A$ with symbol $\{A(k)\}_{k\in\mathbb Z}$, defined by \eqref{operatore1}, extends as a linear bounded operator
\begin{equation*}%\label{continuita_sobolev}
A:H^{s}(\mathbb T)\rightarrow H^{s-m}(\mathbb T)\,,
\end{equation*}
for all $s\in\mathbb R$; more precisely
\begin{equation*}%\label{stima_sobolev}
\Vert Af\Vert_{H^{s-m}(\mathbb T)}\le C\Vert f\Vert_{H^{s}(\mathbb T)}\,,\qquad\forall\,f\in H^{s}(\mathbb T)\,,
\end{equation*}
where $C$ is the same constant involved in \eqref{stima_simbolo}.
\end{proposition}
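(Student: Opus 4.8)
The plan is to argue entirely on the Fourier side, using the description of the $H^s(\mathbb T)$-norm in \eqref{normaHs}. Fix $s\in\mathbb R$ and $f\in H^s(\mathbb T)$. By \eqref{operatore2} the candidate function $Af$ has Fourier coefficients $\widehat{Af}(k)=A(k)\widehat f(k)$, so that (a priori only formally)
\[
\Vert Af\Vert_{H^{s-m}(\mathbb T)}^2=\frac1{2\pi}\sum_{k\in\mathbb Z}(1+|k|)^{2(s-m)}\,|A(k)|^2\,|\widehat f(k)|^2\,.
\]
Now I would insert the symbol bound \eqref{stima_simbolo}, rewritten as $|A(k)|^2\le C^2(1+|k|)^{2m}$ for every $k\in\mathbb Z$; the powers of $(1+|k|)$ then combine, $(1+|k|)^{2(s-m)}(1+|k|)^{2m}=(1+|k|)^{2s}$, and one gets
\[
\Vert Af\Vert_{H^{s-m}(\mathbb T)}^2\le\frac{C^2}{2\pi}\sum_{k\in\mathbb Z}(1+|k|)^{2s}\,|\widehat f(k)|^2=C^2\,\Vert f\Vert_{H^s(\mathbb T)}^2<+\infty\,.
\]

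The only point deserving a word of justification is that the finiteness of the right-hand side means precisely that the sequence $\{A(k)\widehat f(k)\}_{k\in\mathbb Z}$ is square-summable against the weights $(1+|k|)^{2(s-m)}$; hence the formal trigonometric series \eqref{operatore1} defining $Af$ is in fact the Fourier expansion of a genuine element of $H^{s-m}(\mathbb T)$ (in the distributional sense adopted in the paper), and the displayed computation is legitimate. Taking square roots yields the asserted norm inequality with the very same constant $C$, and linearity of $f\mapsto Af$ is evident from \eqref{operatore1}.

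I do not expect any real obstacle: the statement is an immediate consequence of Parseval's identity on $\mathbb T$ together with the pointwise symbol estimate, and the argument is automatically uniform in $s\in\mathbb R$ since $s$ enters only through the bookkeeping of the weight exponents. One could alternatively establish the bound first on trigonometric polynomials and extend $A$ by density/continuity, but this is unnecessary here, since the weighted $\ell^2$ computation already exhibits $Af$ as an element of $H^{s-m}(\mathbb T)$ directly.
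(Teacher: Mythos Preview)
Your proof is correct and follows essentially the same route as the paper's own proof: compute $\Vert Af\Vert_{H^{s-m}(\mathbb T)}^2$ directly via \eqref{normaHs} and \eqref{operatore2}, insert the symbol bound \eqref{stima_simbolo}, and recombine the weights. Your added remark on why the formal series actually defines an element of $H^{s-m}(\mathbb T)$ is a slight elaboration beyond what the paper writes, but not a different argument.
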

\begin{proof}
The proof is a straightforward application of the estimates \eqref{stima_simbolo} and the definition of the Sobolev norm in \eqref{normaHs}; indeed we compute (see \eqref{operatore2}):
\begin{align*}
\Vert Af\Vert^2_{H^{s-m}(\mathbb T)}= &\frac1{2\pi}\sum\limits_{k\in\mathbb Z}(1+\vert k\vert)^{2(s-m)}\vert\widehat{Af}(k)\vert^2=\frac1{2\pi}\sum\limits_{k\in\mathbb Z}(1+\vert k\vert)^{2(s-m)}\vert A(k)\widehat{f}(k)\vert^2\\
\\
&\le \frac{C^2}{2\pi}\sum\limits_{k\in\mathbb Z}(1+\vert k\vert)^{2s}\vert\widehat{f}(k)\vert^2=C^2\Vert f\Vert^2_{H^s(\mathbb T)}\,.
\end{align*}
\end{proof}
We will refer to an operator $A$, under the assumptions of Proposition \ref{prop_molt_sobolev}, as a {\it Fourier multiplier} of order $m$. Such an operator transforms periodic functions with mean zero into functions of the same type, as it is easily seen by observing that
$\widehat{Af}(0)=A(0)\widehat{f}(0)=0,
$ as long as $\widehat f(0)=0$.
\begin{remark}\label{remark_prodotto}
As a straightforward consequence of formulas \eqref{operatore1}, \eqref{operatore2}, it even follows that the composition $AB$ of two Fourier multipliers $A$ and $B$, whose symbols are respectively  $\{A(k)\}_{k\in\mathbb Z}$ and  $\{B(k)\}_{k\in\mathbb Z}$, is again a Fourier multiplier whose symbol is given by  $\{A(k)B(k)\}_{k\in\mathbb Z}$ (the order of $AB$ being the sum of the orders of $A$ and $B$ separately, because of \eqref{stima_simbolo}). We have in particular that $AB=BA$.
\end{remark}

An example of a Fourier multiplier of order one is provided by the $x-$derivative, i.e. $Af=f_x$, since indeed
\begin{equation*}%\label{derivata}
\widehat{Af}(k)=\widehat{f_x}(k)=ik\widehat{f}(k)\,,\qquad\forall\,k\in\mathbb Z\,.
\end{equation*}
Another relevant example of a Fourier multiplier is considered in the next section.
%%%%%%%%%%%%%%%%%%%%%%%%%%%%%%%%%%%%%%%%%%%%%%%%%%%%%%%%%%%%%%%%%%%%%%%%%%%%%%%%%%%%%%%%%%%%%%%%%%%%%%%%%%%%%%%%%%%%%%%%%%%%%%%%%%%%%%%%%%%%%%%%%%%%%%%%%%%%%%%
\subsection{Discrete Hilbert transform}\label{trasf_hilbert}
The discrete Hilbert transform of a periodic function $f:\mathbb T\rightarrow\mathbb C$, denoted by $\mathbb H[f]$, is defined on the Fourier side by setting
\begin{equation}\label{hilbert1}
\widehat{\mathbb H[f]}(k)=-i\,{\rm sgn}\,k\widehat{f}(k)\,,\qquad\forall\,k\in\mathbb Z\,,
\end{equation}
where
\begin{equation}\label{segno}
{\rm sgn}\,k:=
\begin{cases}
1\,,\quad\mbox{if}\,\,k>0\,,\\ 0\,,\quad\mbox{if}\,\,k=0\,,\\ -1\,,\quad\mbox{if}\,\,k<0\,.
\end{cases}
\end{equation}
It is clear that, in view of Proposition \ref{prop_molt_sobolev}, the Hilbert transform provides a Fourier multiplier of order zero, the condition \eqref{stima_simbolo} being satisfied by $A(k)=-i\,{\rm sgn}\,k$ with $m=0$ and $C=1$; then after Proposition \ref{prop_molt_sobolev} we conclude that
\begin{equation*}%\label{continuita_hilbert}
\mathbb H:H^s(\mathbb T)\rightarrow H^s(\mathbb T)
\end{equation*}
is a linear bounded operator and
\begin{equation}\label{stima_hilbert}
\Vert\mathbb H[f]\Vert_{H^s(\mathbb T)}\le\Vert f\Vert_{H^s(\mathbb T)}\,,\qquad\forall\,f\in H^s(\mathbb T)
\end{equation}
for all $s\in\mathbb R$.

Since $\vert{\rm sgn}\,k\vert=1$ for $k\neq 0$, it holds in particular that for every periodic function $f\in H^s(\mathbb T)$ with mean zero (i.e. $\widehat{f}(0)=0$), one has
\begin{equation*}
\Vert\mathbb H[f]\Vert^2_{s}=\frac1{2\pi}\sum\limits_{k\in\mathbb Z\setminus\{0\}}\vert k\vert^{2s}\vert\widehat{\mathbb H[f]}(k)\vert^2=\frac1{2\pi}\sum\limits_{k\in\mathbb Z\setminus\{0\}}\vert k\vert^{2s}\vert-i\,{\rm sgn}\,k\widehat{f}(k)\vert^2=\frac1{2\pi}\sum\limits_{k\in\mathbb Z\setminus\{0\}}\vert k\vert^{2s}\vert\widehat{f}(k)\vert^2=\Vert f\Vert^2_{s}\,.
\end{equation*}
Here below we collect a few elementary properties of the Hilbert transform that will be useful in the sequel.
\begin{itemize}
\item[1.] The Hilbert transform commutes with the $x-$derivative. It is a particular case of the property recalled in Remark \ref{remark_prodotto};
\item[2.] For all periodic functions $f,g:\mathbb T\rightarrow\mathbb C$ there holds
\begin{equation}\label{prodotto_hilbert}
\mathbb H\left[fg-\mathbb H[f]\mathbb H[g]\right]=f\mathbb H[g]+\mathbb H[f]g\,.
\end{equation}
\item[3.] For every periodic function $f:\mathbb T\rightarrow\mathbb C$, with zero mean, and $k\in\mathbb Z$, the following formulas of calculus hold true \footnote{Notice that, according to the convention ${\rm sgn}\,0=0$ (see \eqref{segno}), the Hilbert transform $\mathbb H[f]$ of any periodic function $f$ on $\mathbb T$ has zero mean. Hence, the first formula in \eqref{calcolo} is not true when the mean of  $f$ is different from zero. In the latter case, that formula should be replaced by $\mathbb H^2[f]=-f +\widehat{f}(0)$.}
\begin{equation}\label{calcolo}
\mathbb H^2[f]=-f\,,\qquad \mathbb H\left[e^{ik\cdot}\right](x)=-i\,{\rm sgn}\,k\,e^{ikx}\,.
\end{equation}
\item[4.] For all periodic functions $f,g\in L^2(\mathbb T)$ there holds
\begin{equation}\label{integraleH}
\left(\mathbb H[f], g\right)_{L^2(\mathbb T)}=\left(f, -\mathbb H[g]\right)_{L^2(\mathbb T)}.
\end{equation}
\item[5.] For all periodic functions $f,g\in L^2(\mathbb T)$ and $h\in L^\infty(\mathbb T)$ there holds
\begin{equation}\label{aggiunto}
\left(\left[h;\mathbb H\right]f , g\right)_{L^2(\mathbb T)}=\left(f ,\left[h;\mathbb H\right]g\right)_{L^2(\mathbb T)},
\end{equation}
where $\left[h;\mathbb H\right]$ denotes the commutator between the multiplication by the function $h$ and the Hilbert transform $\mathbb H$.
\end{itemize}
%%%%%%%%%%%%%%%%%%%%%%%%%%%%%%%%%%%%%%%%%%%%%%%%%%%%%%%%%%%%%%%%%%%%%%%%%%%%%%%%%%%%%%%%%%%%%%%%%%%%%%%%%%%%%%%%%%%%%%%%%%%%%%%%%%%%%%%%%%%%%%%%%%%%%%%%%%%%
\section{The linearized equation}\label{equazione_lineare}
It is well-known that a first step in proving the local-in-time existence for the nonlinear equation \eqref{onde_integro_diff} is the study of the well-posedness of the linearization of this equation about a sufficiently smooth state $\varphi_0=\varphi_0(t,x)$; according to \eqref{extstab} we assume that the reference state $\varphi_0$ satisfies
\begin{equation*}%\label{stability}
\mu-2\phi_{0,x}>0\,,\qquad\phi_0:=\mathbb H[\varphi_0]\,.
\end{equation*}
The above condition ensures the ``leading part'' $\varphi_{tt}-(\mu-2\phi_{0,x})\varphi_{xx}$ of the linearized equation to be of hyperbolic type (see \eqref{equazione_lin1}).

In order to linearize the equation \eqref{onde_integro_diff}, it is firstly convenient to rewrite it in the following equivalent form
\begin{equation}\label{onde_1}
\varphi_{tt}-\mu\varphi_{xx}=\left(\mathbb H[\phi^2_x]-\left[\phi\,;\mathbb H\right]\phi_{xx}\right)_{x}\,,
\end{equation}
where
\begin{equation*}%\label{commutatore}
\left[\phi\,;\mathbb H\right]:=\phi\mathbb H-\mathbb H\phi
\end{equation*}
denotes the commutator of the multiplication operator by $\phi$ and the Hilbert transform $\mathbb H$. Here we have used the first formula in \eqref{calcolo} (it is assumed that $\varphi$ has zero spatial mean), which implies $\varphi_{xx}=-\mathbb H[\phi_{xx}]$, and that $\mathbb H$ commutes with the $x-$differentiation.

Let $\varphi_0=\varphi_0(t,x)$ be a given basic state, with zero spatial mean, obeying suitable regularity assumptions and set $\phi_0=\mathbb H[\varphi_0]$. If we let $\mathbb L[\cdot]$ denote the nonlinear operator
\begin{equation}\label{operatore_nonlin}
\mathbb L[\varphi]:=\varphi_{tt}-\mu\varphi_{xx}-\left(\mathbb H[\phi^2_x]-\left[\phi\,;\mathbb H\right]\phi_{xx}\right)_{x}\,,
\end{equation}
then the linearization of the equation \eqref{onde_1} about $\varphi_0$ is defined by
\begin{equation}\label{equazione_lin}
\mathbb{L}^\prime[\varphi_0]\varphi^\prime:=\frac{d}{d\varepsilon}\left.\mathbb L[\varphi_0+\varepsilon\varphi^\prime]\right\vert_{\varepsilon=0}=g\,.
\end{equation}
In the linear equation above the unknown $\varphi^\prime=\varphi^\prime(t,x)$ represents some small perturbation of the basic state $\varphi_0$, with zero spatial mean, while $g=g(t,x)$ is some given nonzero forcing term taking account of lower order perturbation errors arising from the linearization of \eqref{onde_1}.

An explicit computation leads to the following form of the linear operator $\mathbb{L}^\prime[\varphi_0]$ in \eqref{equazione_lin}.
\begin{equation}\label{operatore_lin1}
\mathbb{L}^\prime[\varphi_0]\varphi^\prime=\varphi^\prime_{tt}-\mu\varphi^\prime_{xx}-\left(2\mathbb H[\phi_{0,x}\phi^\prime_x]-\left[\phi^\prime;\mathbb H\right]\phi_{0,xx}-\left[\phi_0;\mathbb H\right]\phi^\prime_{xx}\right)_x\,,\qquad \phi^\prime:=\mathbb H[\varphi^\prime]\,,
\end{equation}
where again $[A\,;B]:=AB-BA$ is the commutator of the operators $A$ and $B$.
%%%%%%%%%%%%%%%%%%%%%%%%%%%%%%%%%%%%%%%%%%%%%%%%%%%%%%%%%%%%%%%%%%%%%%%%%%%%%%%%%%%%%%%%%%%%%%%%%%%%%%%%%%%%%%%%%%%%%%%%%%%%%%%%%%%%%%%%%%%%%%%%%%%%%%%%%
\subsection{$L^2-$a priori estimate}\label{stima_L2}
Following the approach developed in \cite{benzoni-serre}, our first goal is to associate to the linear equation \eqref{equazione_lin} an $L^2$-a priori energy estimate of the first order derivatives of any sufficiently smooth solution of such an equation by the forcing term $g$. In this context, the time $t$ will be allowed to span the whole real line $\mathbb R$, and the energy estimate we are looking for will be of {\it weighted type}, in the sense that the integrability of the involved functions will be measured in the weighted space $L^2_\gamma(\mathbb R\times\mathbb T)$.

In the following, for a real $\gamma>0$ and every function $f=f(t,x)$ on $\mathbb R\times\mathbb T$, according to \eqref{f_gamma}, we use the shortcut
\begin{equation}\label{shortcut}
f_\gamma:=e^{-\gamma t}f\,,\qquad f_{t,\gamma}:=e^{-\gamma t}f_t\,,\qquad f_{x,\gamma}:=e^{-\gamma t}f_x\,.
\end{equation}
For the reader convenience, let us recall here below the following useful, though trivial, identities
\begin{equation}\label{identita_1}
f_{x,\gamma}=\partial_x f_\gamma\,,\qquad f_{t,\gamma}=\gamma f_\gamma+\partial_t f_\gamma\,.
\end{equation}
The relation
\begin{equation}\label{identita}
\partial_x f_{t,\gamma}=\gamma f_{x,\gamma}+\partial_t f_{x,\gamma}
\end{equation}
follows at once from a combination of \eqref{shortcut} and \eqref{identita_1}; the latter will be repeatedly used later on.

From the second equality in \eqref{identita_1} and the integration by parts in $t$, the following estimate can also be proved, see \cite{metivier2} for the proof:
\begin{equation}\label{stima_metivier}
\gamma\Vert f\Vert_{L^2_\gamma(\mathbb R\times\mathbb T)}\le\Vert f_{t}\Vert_{L^2_\gamma(\mathbb R\times\mathbb T)}\,.
\end{equation}
Actually, inequality \eqref{stima_metivier} is a particular case of \eqref{dis_gamma} for $\mathcal{X}=L^2(\mathbb T)$.

This section is devoted to the proof of the following result.
\begin{theorem}\label{teorema_1}
Let the basic state $\varphi_0:\mathbb R\times\mathbb T\rightarrow\mathbb R$, with zero spatial mean, satisfy
\begin{equation}\label{regolarita_phi0}
\varphi_0\in L^\infty(\mathbb R; H^3(\mathbb T))\,,\quad\varphi_{0,t}\in L^\infty(\mathbb R; H^2(\mathbb T))
\end{equation}
and
\begin{equation}\label{stability_unif}
\mu-2\phi_{0,x}\ge\delta/2\quad\mbox{in}\,\,\,\mathbb R\times\mathbb T\,,
\end{equation}
with $\delta>0$ assigned in Theorem \ref{nonlin_th}. Then there exist constants $\gamma_0\ge 1$ depending only on $\delta$ and $\varphi_0$ through the norms $\Vert\varphi_{0}\Vert_{L^\infty(\mathbb R;H^3(\mathbb T))}$, $\Vert\varphi_{0,t}\Vert_{L^\infty(\mathbb R; H^2(\mathbb T))}$, and $C_0>0$ depending only on $\delta$, such that for all $\gamma\ge \gamma_0$ and every sufficiently smooth function $\varphi^\prime:\mathbb R\times\mathbb T\rightarrow\mathbb R$, with zero spatial mean, the following a priori estimate
\begin{equation}\label{stima_apriori_1}
\gamma\left\{\Vert\varphi^\prime_{t}\Vert^2_{L^2_\gamma(\mathbb R\times\mathbb T)}+\Vert\varphi^\prime_{x}\Vert^2_{L^2_\gamma(\mathbb R\times\mathbb T)}\right\}\le\frac{C_0}{\gamma}\Vert g\Vert^2_{L^2_\gamma(\mathbb R\times\mathbb T)}
\end{equation}
is satisfied, where $g:=\mathbb{L}^\prime[\varphi_0]\varphi^\prime$.
%\footnote{The constant $C_0$ in \eqref{stima_apriori_1} can be chosen to depend only on $\delta$, see the end of the proof.}.
\end{theorem}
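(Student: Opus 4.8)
The plan is to derive the estimate by the classical energy method for a wave-type equation: multiply the equation $\mathbb{L}'[\varphi_0]\varphi'=g$ by (a weighted version of) $\partial_t\varphi'$, integrate over $\mathbb R\times\mathbb T$, and integrate by parts in $t$ and $x$ to build the energy. Because we work in the weighted space $L^2_\gamma$, the natural multiplier is $e^{-2\gamma t}\varphi'_t$; equivalently, I would first substitute $\varphi'_\gamma=e^{-\gamma t}\varphi'$, rewrite the equation for $\varphi'_\gamma$ using the identities \eqref{identita_1}, \eqref{identita}, and then multiply by $\partial_t\varphi'_\gamma$ and integrate. The $\gamma$-terms produced by $f_{t,\gamma}=\gamma f_\gamma+\partial_t f_\gamma$ are exactly what yields the positive left-hand side $\gamma(\Vert\varphi'_t\Vert^2+\Vert\varphi'_x\Vert^2)$ in \eqref{stima_apriori_1}, while the source contributes $\frac1\gamma\Vert g\Vert^2$ after a Cauchy–Schwarz/Young splitting $\int g\,\varphi'_{t,\gamma}\lesssim \frac1\gamma\Vert g\Vert^2+\gamma\Vert\varphi'_{t,\gamma}\Vert^2$, the last term being absorbed for $\gamma$ large.

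The first main point is the treatment of the principal part $\varphi'_{tt}-(\mu-2\phi_{0,x})\varphi'_{xx}$. Multiplying by $\varphi'_{t,\gamma}$ and integrating by parts in $x$ in the second term produces the energy density $\tfrac12|\varphi'_{t,\gamma}|^2+\tfrac12(\mu-2\phi_{0,x})|\varphi'_{x,\gamma}|^2$ up to commutator-type remainders; here the hyperbolicity hypothesis \eqref{stability_unif}, $\mu-2\phi_{0,x}\ge\delta/2$, is what makes this quadratic form coercive, and where the constant $C_0$ depending only on $\delta$ enters. The time derivative falling on the coefficient $\mu-2\phi_{0,x}=\mu-2\mathbb H[\varphi_{0,x}]$ gives a term $\int \partial_t(\phi_{0,x})\,|\varphi'_{x,\gamma}|^2$, which is controlled by $\Vert\phi_{0,xt}\Vert_{L^\infty}\lesssim\Vert\varphi_{0,t}\Vert_{H^2}$ (using \eqref{stima_hilbert} and Sobolev embedding on $\mathbb T$) times $\Vert\varphi'_{x,\gamma}\Vert^2$; this is absorbed by the $\gamma\Vert\varphi'_x\Vert^2$ on the left once $\gamma\ge\gamma_0$ with $\gamma_0$ depending on $\Vert\varphi_{0,t}\Vert_{L^\infty(\mathbb R;H^2)}$. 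This is the mechanism by which the regularity assumptions \eqref{regolarita_phi0} on $\varphi_0$ feed into $\gamma_0$.

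The genuinely delicate step — and the one I expect to be the main obstacle — is controlling the nonlocal lower-order terms, namely $\big(2\mathbb H[\phi_{0,x}\phi'_x]-[\phi';\mathbb H]\phi_{0,xx}-[\phi_0;\mathbb H]\phi'_{xx}\big)_x$, tested against $\varphi'_{t,\gamma}$. Naively the term $([\phi_0;\mathbb H]\phi'_{xx})_x$ looks like it carries three $x$-derivatives on $\varphi'$, which would be fatal; the point is that the commutator $[\phi_0;\mathbb H]$ is smoothing of order one (a zero-order operator composed against the structure of $\phi_{0}$), so that $([\phi_0;\mathbb H]\phi'_{xx})_x$ is effectively only order two in $\varphi'$, and after one integration by parts moving a derivative onto $\varphi'_{t,\gamma}$ one is left with quantities of the form $\int (\text{order-}1\text{ in }\varphi') \cdot \varphi'_{tx,\gamma}$. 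To close the estimate without any loss one must exploit the self-adjointness \eqref{aggiunto} of $[h;\mathbb H]$, the antisymmetry \eqref{integraleH} of $\mathbb H$, and the product identity \eqref{prodotto_hilbert}, together with sharp commutator estimates (of the type announced for Appendix~\ref{stima_commutatore}) to rewrite the worst pairings as either exact $t$-derivatives of an admissible energy correction or as terms bounded by $\Vert\varphi_0\Vert_{H^3}\big(\Vert\varphi'_{t,\gamma}\Vert^2+\Vert\varphi'_{x,\gamma}\Vert^2\big)$. Collecting everything, choosing $\gamma_0$ large enough (in terms of $\delta$, $\Vert\varphi_0\Vert_{L^\infty(\mathbb R;H^3)}$, $\Vert\varphi_{0,t}\Vert_{L^\infty(\mathbb R;H^2)}$) to absorb all remainders into the coercive left-hand side, and integrating in $t$ over all of $\mathbb R$ (the boundary terms at $t=\pm\infty$ vanish by the $e^{-\gamma t}$ weight for admissible $\varphi'$), yields \eqref{stima_apriori_1}.
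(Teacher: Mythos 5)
Your overall strategy coincides with the paper's: rewrite the linearized equation for the weighted unknown via \eqref{identita_1}--\eqref{identita}, multiply by $\varphi'_{t,\gamma}$, integrate over $\mathbb R\times\mathbb T$, obtain coercivity from \eqref{stability_unif}, bound the source by Cauchy--Schwarz, and absorb the remainders for $\gamma$ large. You also correctly diagnose the one genuinely dangerous term, namely $\mathcal I_2:=-\int_{\mathbb R\times\mathbb T}\bigl(\left[\phi_0;\mathbb H\right]\partial_x\phi_{x,\gamma}\bigr)_x\varphi_{t,\gamma}$, and you correctly name the tools that end up being used: \eqref{aggiunto}, \eqref{integraleH}, \eqref{prodotto_hilbert}, and the Appendix commutator lemmas.

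The gap is precisely at that term, and your proposed resolution --- ``rewrite the worst pairings as exact $t$-derivatives of an energy correction or as bounded terms'' --- does not capture the actual mechanism, which is a self-cancellation rather than an energy modification. After integrating by parts in $x$ and splitting $\partial_x\varphi_{t,\gamma}=\gamma\varphi_{x,\gamma}+\partial_t\varphi_{x,\gamma}$ via \eqref{identita}, one writes $\mathcal I_2=\mathcal I_{2,1}+\mathcal I_{2,2}$ with $\mathcal I_{2,1}=\gamma\int\left[\phi_0;\mathbb H\right]\partial_x\phi_{x,\gamma}\,\varphi_{x,\gamma}$ and $\mathcal I_{2,2}=\int\left[\phi_0;\mathbb H\right]\partial_x\phi_{x,\gamma}\,\partial_t\varphi_{x,\gamma}$. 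Integrating $\mathcal I_{2,2}$ by parts in $t$, applying Leibniz, and then in the leftover term $\int\left[\phi_0;\mathbb H\right]\partial_t\phi_{x,\gamma}\,\partial_x\varphi_{x,\gamma}$ moving $\mathbb H$ around via \eqref{integraleH}, \eqref{aggiunto} and $\mathbb H^2=-\mathrm{Id}$, and finally swapping $t$ and $x$ derivatives back via \eqref{identita}, one finds that $\mathcal I_{2,1}$ and $-\mathcal I_2$ itself resurface on the right. This yields $\mathcal I_2=2\mathcal I_{2,1}-\mathcal I_2+\text{(first-order terms)}$, i.e.\ $2\mathcal I_2=2\mathcal I_{2,1}+\text{(first-order terms)}$, which algebraically removes the obstruction. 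Your description of the bad pairing as ``$\int(\text{order-}1)\cdot\varphi'_{tx,\gamma}$'' is accurate, but neither ``exact $t$-derivative of an energy correction'' nor ``directly bounded'' resolves it without this solve-for-$\mathcal I_2$ step.

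A second point you do not address is that $\mathcal I_{2,1}$ carries a free factor of $\gamma$, so even after the cancellation it cannot be absorbed into the left-hand side without further work: it would give a contribution $\gamma\Vert\varphi_0\Vert\cdot\Vert\varphi'_x\Vert^2_{L^2_\gamma}$ which does \emph{not} vanish relative to $\gamma\Vert\varphi'_x\Vert^2$ as $\gamma\to\infty$. The paper deals with this by reducing $\int\left[\phi_0;\mathbb H\right]\partial_x\phi_{x,\gamma}\varphi_{x,\gamma}$, via the product identity \eqref{prodotto_hilbert} and \eqref{calcolo}, to the form $\frac12\int\left[\mathbb H;\varphi_{0,x}\right]\partial_x\varphi_\gamma\cdot\varphi_{x,\gamma}$, where $\left[\mathbb H;\varphi_{0,x}\right]\partial_x$ is a bounded operator on $L^2(\mathbb T)$ (estimate \eqref{stima_comm_2}); the factor $\gamma$ on $\varphi_\gamma$ is then converted to a first-order derivative via \eqref{stima_metivier}, $\gamma\Vert\varphi_\gamma\Vert\le\Vert\varphi_{t,\gamma}\Vert$. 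Without both of these algebraic reductions the energy method closes only with a loss of regularity, so although your outline is on the right track, it stops short of a proof precisely at the step you flag as ``the main obstacle.''
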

\begin{proof}
Let $\varphi^\prime(t,x)$ be a sufficiently smooth function, with zero spatial mean, according to the statement of Theorem \ref{teorema_1} and set $g:=\mathbb{L}^\prime[\varphi_0]\varphi^\prime$. To simplify the notation, in the following we drop the superscript ${}^\prime$ in the unknown function $\varphi^\prime$.
%%%
\newline
Just in order to outline the quantity $\mu-2\phi_{0,x}$, involved in \eqref{stability_unif}, as coefficient of $\varphi_{xx}$, it is convenient to expand the last term in the right-hand side of \eqref{operatore_lin1}; by the rules of calculus collected in Section \ref{trasf_hilbert} we get
\begin{align*}
\left(2\mathbb H\right.&\left.[\phi_{0,x}\phi_x]-\left[\phi;\mathbb H\right]\phi_{0,xx}-\left[\phi_0;\mathbb H\right]\phi_{xx}\right)_x\\
&=2\mathbb H[\phi_{0,x}\phi_{xx}]+2\mathbb H[\phi_{0,xx}\phi_x]-\left(\left[\phi;\mathbb H\right]\phi_{0,xx}+\left[\phi_0;\mathbb H\right]\phi_{xx}\right)_x\\
&=2\phi_{0,x}\mathbb H[\phi_{xx}]+2\left[\mathbb H\,;\phi_{0,x}\right]\phi_{xx}+2\mathbb H[\phi_{0,xx}\phi_x]-\left(\left[\phi;\mathbb H\right]\phi_{0,xx}+\left[\phi_0;\mathbb H\right]\phi_{xx}\right)_x\\
&=-2\phi_{0,x}\varphi_{xx}+2\left[\mathbb H\,;\phi_{0,x}\right]\phi_{xx}+2\mathbb H[\phi_{0,xx}\phi_x]-\left(\left[\phi;\mathbb H\right]\phi_{0,xx}+\left[\phi_0;\mathbb H\right]\phi_{xx}\right)_x\,.
\end{align*}
Then substituting the last expression into \eqref{operatore_lin1} gives
\begin{equation}\label{operatore_lin2}
\mathbb{L}[\varphi_0]\varphi=\varphi_{tt}-\left(\mu-2\phi_{0,x}\right)\varphi_{xx}-2\left[\mathbb H\,;\phi_{0,x}\right]\phi_{xx}-2\mathbb H[\phi_{0,xx}\phi_x]+\left(\left[\phi;\mathbb H\right]\phi_{0,xx}+\left[\phi_0;\mathbb H\right]\phi_{xx}\right)_x\,.
\end{equation}
In view of \eqref{operatore_lin2}, the linearized equation \eqref{equazione_lin} takes the form
\begin{equation}\label{equazione_lin1}
\varphi_{tt}-\left(\mu-2\phi_{0,x}\right)\varphi_{xx}=2\left[\mathbb H\,;\phi_{0,x}\right]\phi_{xx}+2\mathbb H[\phi_{0,xx}\phi_x]-\left(\left[\phi;\mathbb H\right]\phi_{0,xx}+\left[\phi_0;\mathbb H\right]\phi_{xx}\right)_x+g\,.
\end{equation}
Since the estimate \eqref{stima_apriori_1} involves the weighted $L^2-$norms of $\varphi_t$, $\varphi_x$, it is also convenient to restate the equation \eqref{equazione_lin1} in terms of $\varphi_{t,\gamma}$, $\varphi_{x,\gamma}$. Then \eqref{equazione_lin1} becomes equivalent to
\begin{equation}\label{equazione_lin1_w}
\begin{split}
\gamma\varphi_{t,\gamma}&+\partial_t\varphi_{t,\gamma}-\left(\mu-2\phi_{0,x}\right)\partial_x\varphi_{x,\gamma}=2\left[\mathbb H\,;\phi_{0,x}\right]\partial_x\phi_{x,\gamma}+2\mathbb H[\phi_{0,xx}\phi_{x,\gamma}]\\
&-\left(\left[\phi_\gamma;\mathbb H\right]\phi_{0,xx}+\left[\phi_0;\mathbb H\right]\partial_x\phi_{x,\gamma}\right)_x+g_\gamma\,.
\end{split}
\end{equation}
We multiply by $\varphi_{t,\gamma}$ the equation \eqref{equazione_lin1_w} and integrate over $\mathbb R\times\mathbb T$ to get
\begin{equation}\label{equazione_lin2}
\begin{split}
\gamma\Vert\varphi_{t}\Vert^2_{L^2_\gamma(\mathbb R\times\mathbb T)}&+\int_{\mathbb R\times\mathbb T}\partial_t\varphi_{t,\gamma}\varphi_{t,\gamma}\,dx\,dt-\int_{\mathbb R\times\mathbb T}\left(\mu-2\phi_{0,x}\right)\partial_x\varphi_{x,\gamma}\varphi_{t,\gamma}\,dx\,dt\\
&=2\int_{\mathbb R\times\mathbb T}\left[\mathbb H\,;\phi_{0,x}\right]\partial_x\phi_{x,\gamma}\varphi_{t,\gamma}\,dx\,dt+2\int_{\mathbb R\times\mathbb T}\mathbb H[\phi_{0,xx}\phi_{x,\gamma}]\varphi_{t,\gamma}\,dx\,dt\\
&-\int_{\mathbb R\times\mathbb T}\left(\left[\phi_\gamma;\mathbb H\right]\phi_{0,xx}+\left[\phi_0;\mathbb H\right]\partial_x\phi_{x,\gamma}\right)_x\varphi_{t,\gamma}\,dx\,dt+\int_{\mathbb R\times\mathbb T}g_\gamma\varphi_{t,\gamma}\,dx\,dt\,.
\end{split}
\end{equation}
As for the two integrals in the left-hand side, integration by parts gives
\begin{equation}\label{int_1.1}
\int_{\mathbb R\times\mathbb T}\partial_t\varphi_{t,\gamma}\varphi_{t,\gamma}\,dx\,dt=0;
\end{equation}
\begin{equation}\label{int_1.2}
\begin{split}
-\int_{\mathbb R\times\mathbb T}&\left(\mu-2\phi_{0,x}\right)\partial_x\varphi_{x,\gamma}\varphi_{t,\gamma}\,dx\,dt=\int_{\mathbb R\times\mathbb T}\left(\left(\mu-2\phi_{0,x}\right)\varphi_{t,\gamma}\right)_x\varphi_{x,\gamma}\,dx\,dt\\
%&
%=\int_{\mathbb R\times\mathbb T}\left(\mu-2\phi_{0,x}\right)\partial_x\varphi_{t,\gamma}\varphi_{x,\gamma}\,dx\,dt+\int_{\mathbb R\times\mathbb T}\left(\mu-2\phi_{0,x}\right)_x\varphi_{t,\gamma}\varphi_{x,\gamma}\,dx\,dt\\
&
=\int_{\mathbb R\times\mathbb T}\left(\mu-2\phi_{0,x}\right)\partial_x\varphi_{t,\gamma}\varphi_{x,\gamma}\,dx\,dt-\int_{\mathbb R\times\mathbb T}2\phi_{0,xx}\varphi_{t,\gamma}\varphi_{x,\gamma}\,dx\,dt\,.
\end{split}
\end{equation}
We use the identity \eqref{identita}, with $f=\varphi$, in \eqref{int_1.2} to get
\begin{equation}\label{int_1.2.1}
\begin{split}
-\int_{\mathbb R\times\mathbb T}&\left(\mu-2\phi_{0,x}\right)\partial_x\varphi_{x,\gamma}\varphi_{t,\gamma}\,dx\,dt\\
&=\gamma\int_{\mathbb R\times\mathbb T}\left(\mu-2\phi_{0,x}\right)\vert\varphi_{x,\gamma}\vert^2\,dx\,dt+\int_{\mathbb R\times\mathbb T}\left(\mu-2\phi_{0,x}\right)\partial_t\varphi_{x,\gamma}\varphi_{x,\gamma}-2\int_{\mathbb R\times\mathbb T}\phi_{0,xx}\varphi_{t,\gamma}\varphi_{x,\gamma}\,dx\,dt\,.
\end{split}
\end{equation}
As for the second integral in the right-hand side of \eqref{int_1.2.1}, integration by parts in $t$ gives
%\begin{equation*}
%\begin{split}
%\int_{\mathbb R\times\mathbb T}&\left(\mu-2\phi_{0,x}\right)\partial_t\varphi_{x,\gamma}\varphi_{x,\gamma}=-\int_{\mathbb R\times\mathbb T}\partial_t\left(\left(\mu-2\phi_{0,x}\right)\varphi_{x,\gamma}\right)\varphi_{x,\gamma}\,dx\,dt\\
%&=2\int_{\mathbb R\times\mathbb T}\phi_{0,xt}\vert\varphi_{x,\gamma}\vert^2\,dx\,dt-\int_{\mathbb R\times\mathbb T}\left(\mu-2\phi_{0,x}\right)\partial_t\varphi_{x,\gamma}\varphi_{x,\gamma}\,dx\,dt
%\end{split}
%\end{equation*}
%hence
\begin{equation*}
\int_{\mathbb R\times\mathbb T}\left(\mu-2\phi_{0,x}\right)\partial_t\varphi_{x,\gamma}\varphi_{x,\gamma}=\int_{\mathbb R\times\mathbb T}\phi_{0,xt}\vert\varphi_{x,\gamma}\vert^2\,dx\,dt\,.
\end{equation*}
Replacing the latter into \eqref{int_1.2.1} then gives
\begin{equation}\label{int_1.2.2}
\begin{split}
-\int_{\mathbb R\times\mathbb T}&\left(\mu-2\phi_{0,x}\right)\partial_x\varphi_{x,\gamma}\varphi_{t,\gamma}\,dx\,dt\\
&=\gamma\int_{\mathbb R\times\mathbb T}\left(\mu-2\phi_{0,x}\right)\vert\varphi_{x,\gamma}\vert^2\,dx\,dt+\int_{\mathbb R\times\mathbb T}\phi_{0,xt}\vert\varphi_{x,\gamma}\vert^2\,dx\,dt-2\int_{\mathbb R\times\mathbb T}\phi_{0,xx}\varphi_{t,\gamma}\varphi_{x,\gamma}\,dx\,dt\,.
\end{split}
\end{equation}
Replacing \eqref{int_1.1}, \eqref{int_1.2.2} into \eqref{equazione_lin2} we get
\begin{equation}\label{equazione_lin3}
\begin{split}
\gamma\Vert\varphi_{t}&\Vert^2_{L^2_\gamma(\mathbb R\times\mathbb T)}+\gamma\int_{\mathbb R\times\mathbb T}\left(\mu-2\phi_{0,x}\right)\vert\varphi_{x,\gamma}\vert^2\,dx\,dt\\
&=-\int_{\mathbb R\times\mathbb T}\phi_{0,xt}\vert\varphi_{x,\gamma}\vert^2\,dx\,dt+2\int_{\mathbb R\times\mathbb T}\phi_{0,xx}\varphi_{t,\gamma}\varphi_{x,\gamma}\,dx\,dt\\
&+2\int_{\mathbb R\times\mathbb T}\left[\mathbb H\,;\phi_{0,x}\right]\partial_x\phi_{x,\gamma}\varphi_{t,\gamma}\,dx\,dt+2\int_{\mathbb R\times\mathbb T}\mathbb H[\phi_{0,xx}\phi_{x,\gamma}]\varphi_{t,\gamma}\,dx\,dt\\
&-\int_{\mathbb R\times\mathbb T}\left(\left[\phi_\gamma;\mathbb H\right]\phi_{0,xx}+\left[\phi_0;\mathbb H\right]\partial_x\phi_{x,\gamma}\right)_x\varphi_{t,\gamma}\,dx\,dt+\int_{\mathbb R\times\mathbb T}g_\gamma\varphi_{t,\gamma}\,dx\,dt\,.
\end{split}
\end{equation}
Now we are going to provide a suitable estimate for each of the integral terms that appear in the right-hand side of the identity above.
%\newline
Throughout the following, $C$ will always denote some numerical positive constant that may be possibly different from
%an occurrence to another into the same formula.
line to line.
\newline
Combining the Sobolev imbedding $H^1(\mathbb T)\hookrightarrow L^\infty(\mathbb T)$ and Poincar\'{e}'s inequality for functions with zero spatial mean, the following functional inequalities
\begin{equation}\label{sobolev-poincare}
\Vert\psi\Vert_{L^\infty(\mathbb T)}\le C\Vert\psi\Vert_{H^1(\mathbb T)}\le C\Vert\psi_x\Vert_{L^2(\mathbb T)}\,,
\end{equation}
can be easily established for all periodic functions $\psi\in H^1(\mathbb T)$ with spatial mean equal to zero. They will be repeatedly used in the following calculations.
\newline
Let us come back to the estimate of the right-hand side of \eqref{equazione_lin3}. H\"{o}lder's inequality gives
\begin{equation*}%\label{stima_int_2.1}
-\int_{\mathbb R\times\mathbb T}\phi_{0,xt}\vert\varphi_{x,\gamma}\vert^2\,dx\,dt\le \int_{\mathbb R}\Vert \phi_{0,xt}\Vert_{L^\infty(\mathbb T)}\Vert\varphi_{x,\gamma}\Vert^2_{L^2(\mathbb T)}\,dt\,;
\end{equation*}
then we apply \eqref{sobolev-poincare} to $\phi_{0,xt}$ (recall that $\varphi_0$ and $\phi_0=\mathbb H[\varphi_0]$, as well as all their derivatives in $x$ and $t$, have zero spatial mean) and the $L^2-$continuity of the Hilbert transform (see \eqref{stima_hilbert} for $s=0$) to find
\begin{equation}\label{stima_int_2.1'}
-\int_{\mathbb R\times\mathbb T}\phi_{0,xt}\vert\varphi_{x,\gamma}\vert^2\,dx\,dt\le C\int_{\mathbb R}\Vert \varphi_{0,xxt}\Vert_{L^2(\mathbb T)}\Vert\varphi_{x,\gamma}\Vert^2_{L^2(\mathbb T)}\,dt\le C\Vert \varphi_{0,t}\Vert_{L^\infty(\mathbb R;H^2(\mathbb T))}\Vert\varphi_{x}\Vert^2_{L^2_\gamma(\mathbb R\times\mathbb T)}\,.
\end{equation}
Similarly, by H\"{o}lder and Young's inequalities and making use of \eqref{sobolev-poincare} and the continuity of the Hilbert transform, one gets
\begin{equation}\label{stima_int_2.2}
\begin{split}
2\int_{\mathbb R\times\mathbb T}&\phi_{0,xx}\varphi_{t,\gamma}\varphi_{x,\gamma}\,dx\,dt\le 2\int_{\mathbb R}\Vert\phi_{0,xx}\Vert_{L^\infty(\mathbb T)}\Vert\varphi_{t,\gamma}\Vert_{L^2(\mathbb T)}\Vert\varphi_{x,\gamma}\Vert_{L^2(\mathbb T)}\,dt\\
&\le C\int_\mathbb R\Vert\varphi_{0,xxx}\Vert_{L^2(\mathbb T)}\left\{\Vert\varphi_{t,\gamma}\Vert_{L^2(\mathbb T)}^2+\Vert\varphi_{x,\gamma}\Vert_{L^2(\mathbb T)}^2\right\}\,dt\\
&\le C\Vert\varphi_{0}\Vert_{L^\infty(\mathbb R;H^3(\mathbb T))}\left\{\Vert\varphi_{t}\Vert_{L^2_\gamma(\mathbb R\times\mathbb T)}^2+\Vert\varphi_{x}\Vert_{L^2_\gamma(\mathbb R\times\mathbb T)}^2\right\}\,.
\end{split}
\end{equation}
To provide an estimate of the integral $\displaystyle 2\int_{\mathbb R\times\mathbb T}\mathbb H[\phi_{0,xx}\phi_{x,\gamma}]\varphi_{t,\gamma}\,dx\,dt$ in the right-hand side of \eqref{equazione_lin3} we use the properties of the Hilbert transform collected in Section \ref{trasf_hilbert}, together with  H\"{o}lder and Young's inequalities and \eqref{sobolev-poincare}, to get
\begin{equation}\label{stima_int_2.4}
\begin{split}
2\int_{\mathbb R\times\mathbb T}&\mathbb H[\phi_{0,xx}\phi_{x,\gamma}]\varphi_{t,\gamma}\,dx\,dt=-2\int_{\mathbb R\times\mathbb T}\phi_{0,xx}\phi_{x,\gamma}\mathbb H[\varphi_{t,\gamma}]\,dx\,dt=-2\int_{\mathbb R\times\mathbb T}\phi_{0,xx}\phi_{x,\gamma}\phi_{t,\gamma}\,dx\,dt\\
&\le 2\int_{\mathbb R} \Vert\phi_{0,xx}\Vert_{L^\infty(\mathbb T)}\Vert\phi_{t,\gamma}\Vert_{L^2(\mathbb T)}\Vert\phi_{x,\gamma}\Vert_{L^2(\mathbb T)}\,dt\le C\int_{\mathbb R}\Vert\varphi_{0,xxx}\Vert_{L^2(\mathbb T)}\Vert\varphi_{t,\gamma}\Vert_{L^2(\mathbb T)}\Vert\varphi_{x,\gamma}\Vert_{L^2(\mathbb T)}\,dt\\
%&\le C\int_{\mathbb R}\Vert\varphi_{0,xxx}\Vert_{L^2(\mathbb T)}\left\{\Vert\varphi_{t,\gamma}\Vert_{L^2(\mathbb T)}^2+\Vert\varphi_{x,\gamma}\Vert_{L^2(\mathbb T)}^2\right\}\,dt\\
&\le C\Vert\varphi_{0}\Vert_{L^\infty(\mathbb R;H^3(\mathbb T))}\left\{\Vert\varphi_{t}\Vert_{L^2_\gamma(\mathbb R\times\mathbb T)}^2+\Vert\varphi_{x}\Vert_{L^2_\gamma(\mathbb R\times\mathbb T)}^2\right\}\,.
\end{split}
\end{equation}
To obtain an estimate of $\displaystyle 2\int_{\mathbb R\times\mathbb T}\left[\mathbb H\,;\phi_{0,x}\right]\partial_x\phi_{x,\gamma}\varphi_{t,\gamma}\,dx\,dt$ we use H\"{o}lder's inequality, \eqref{sobolev-poincare} and the estimate \eqref{stima_comm_2} of Lemma \ref{lemma_comm} with $s=1$, $v=\phi_{0,x}$ and $f=\phi_{x,\gamma}$ (and again the properties of the Hilbert transform) to get
\begin{equation}\label{stima_int_2.3}
\begin{split}
\displaystyle 2\int_{\mathbb R\times\mathbb T}&\left[\mathbb H\,;\phi_{0,x}\right]\partial_x\phi_{x,\gamma}\varphi_{t,\gamma}\,dx\,dt\le 2\int_{\mathbb R}\Vert \left[\mathbb H\,;\phi_{0,x}\right]\partial_x\phi_{x,\gamma}\Vert_{L^2(\mathbb T)}\Vert\varphi_{t,\gamma}\Vert_{L^2(\mathbb T)}\,dt\\
&\le C\int_{\mathbb R}\Vert \phi_{0,xx}\Vert_{H^1(\mathbb T)}\Vert \phi_{x,\gamma}\Vert_{L^2(\mathbb T)}\Vert\varphi_{t,\gamma}\Vert_{L^2(\mathbb T)}\le C \int_{\mathbb R}\Vert \varphi_{0,xx}\Vert_{H^1(\mathbb T)}\Vert \varphi_{x,\gamma}\Vert_{L^2(\mathbb T)}\Vert\varphi_{t,\gamma}\Vert_{L^2(\mathbb T)}\\
&\le C\Vert \varphi_{0}\Vert_{L^\infty(\mathbb R;H^3(\mathbb T))}\left\{\Vert \varphi_{x}\Vert_{L^2_\gamma(\mathbb R\times\mathbb T)}^2+\Vert \varphi_{t}\Vert_{L^2_\gamma(\mathbb R\times\mathbb T)}^2\right\}\,.
\end{split}
\end{equation}
As for the integral term involving the source $g$, H\"{o}lder and Young's inequalities yield
\begin{equation}\label{stima_g}
\int_{\mathbb R\times\mathbb T}g_\gamma\,\varphi_{t,\gamma}\,dx\,dt\le\Vert g\Vert_{L^2_\gamma(\mathbb R\times\mathbb T)}\Vert\varphi_{t}\Vert_{L^2_\gamma(\mathbb R\times\mathbb T)}\le\frac{\gamma}{2}\Vert\varphi_{t}\Vert_{L^2_\gamma(\mathbb R\times\mathbb T)}^2+\frac{1}{2\gamma}\Vert g\Vert_{L^2_\gamma(\mathbb R\times\mathbb T)}^2\,.
\end{equation}
It remains now to treat the last term in the right-hand side of \eqref{equazione_lin3}, that is
\begin{equation}\label{int_2.5}
\mathcal I:=-\int_{\mathbb R\times\mathbb T}\left(\left[\phi_\gamma;\mathbb H\right]\phi_{0,xx}+\left[\phi_0;\mathbb H\right]\partial_x\phi_{x,\gamma}\right)_x\varphi_{t,\gamma}\,dx\,dt\,.
\end{equation}
Let us firstly decompose $\mathcal I$ above as the sum
\begin{equation}\label{decomp_int_2.5}
\mathcal I=\mathcal I_1+\mathcal I_2\,,
\end{equation}
where
\begin{align}
&\mathcal I_1:=-\int_{\mathbb R\times\mathbb T}\left(\left[\phi_\gamma;\mathbb H\right]\phi_{0,xx}\right)_x\varphi_{t,\gamma}\,dx\,dt\,,\label{int_2.5.1}\\
&\mathcal I_2:=-\int_{\mathbb R\times\mathbb T}\left(\left[\phi_0;\mathbb H\right]\partial_x\phi_{x,\gamma}\right)_x\varphi_{t,\gamma}\,dx\,dt\,.\label{int_2.5.2}
\end{align}
{\it The estimate of $\mathcal I_1$:} we use Leibniz's formula, the definition of the commutator and formula \eqref{integraleH} to rewrite $\mathcal I_1$ as
\begin{equation}\label{rappres_int_2.5.1}
\begin{split}
\mathcal I_1=&-\int_{\mathbb R\times\mathbb T}\left[\phi_{x,\gamma};\mathbb H\right]\phi_{0,xx}\varphi_{t,\gamma}\,dx\,dt-\int_{\mathbb R\times\mathbb T}\left[\phi_{\gamma};\mathbb H\right]\phi_{0,xxx}\varphi_{t,\gamma}\,dx\\
&=-\int_{\mathbb R\times\mathbb T}\phi_{x,\gamma}\mathbb H[\phi_{0,xx}]\varphi_{t,\gamma}\,dx\,dt+\int_{\mathbb R\times\mathbb T}\mathbb H\left[\phi_{x,\gamma}\phi_{0,xx}\right]\varphi_{t,\gamma}\,dx\,dt\\
&-\int_{\mathbb R\times\mathbb T}\phi_\gamma\mathbb H[\phi_{0,xxx}]\varphi_{t,\gamma}\,dx\,dt+\int_{\mathbb R\times\mathbb T}\mathbb H\left[\phi_\gamma\phi_{0,xxx}\right]\varphi_{t,\gamma}\,dx\,dt\\
&=-\int_{\mathbb R\times\mathbb T}\phi_{x,\gamma}\mathbb H[\phi_{0,xx}]\varphi_{t,\gamma}\,dx\,dt-\int_{\mathbb R\times\mathbb T}\phi_{x,\gamma}\phi_{0,xx}\mathbb H[\varphi_{t,\gamma}]\,dx\,dt\\
&-\int_{\mathbb R\times\mathbb T}\phi_\gamma\mathbb H[\phi_{0,xxx}]\varphi_{t,\gamma}\,dx\,dt-\int_{\mathbb R\times\mathbb T}\phi_\gamma\phi_{0,xxx}\mathbb H[\varphi_{t,\gamma}]\,dx\,dt\\
&=\int_{\mathbb R\times\mathbb T}\phi_{x,\gamma}\varphi_{0,xx}\varphi_{t,\gamma}\,dx\,dt-\int_{\mathbb R\times\mathbb T}\phi_{x,\gamma}\phi_{0,xx}\phi_{t,\gamma}\,dx\,dt\\
&+\int_{\mathbb R\times\mathbb T}\phi_\gamma\varphi_{0,xxx}\varphi_{t,\gamma}\,dx\,dt-\int_{\mathbb R\times\mathbb T}\phi_\gamma\phi_{0,xxx}\phi_{t,\gamma}\,dx\,dt\,.
\end{split}
\end{equation}
By using H\"{older} and Young's inequalities, \eqref{sobolev-poincare} and the $L^2-$continuity of the Hilbert transform (cf. Section \ref{trasf_hilbert}), the first and the second integrals involved in the representation \eqref{rappres_int_2.5.1} can be easily estimated by
\begin{equation}\label{stima_int_2.5.1.1-2}
\begin{array}{ll}
\begin{split}
\int_{\mathbb R\times\mathbb T}&\varphi_{0,xx}\phi_{x,\gamma}\varphi_{t,\gamma}\,dx\,dt\le\int_{\mathbb R}\Vert \varphi_{0,xx}\Vert_{L^\infty(\mathbb T)}\Vert\phi_{x,\gamma}\Vert_{L^2(\mathbb T)}\Vert\varphi_{t,\gamma}\Vert_{L^2(\mathbb T)}\,dt\\
&\le C\int_{\mathbb R}\Vert \varphi_{0,xxx}\Vert_{L^2(\mathbb T)}\left\{\Vert\varphi_{x\gamma}\Vert_{L^2(\mathbb T)}^2+\Vert\varphi_{t,\gamma}\Vert_{L^2(\mathbb T)}^2\right\}\,dt\\
&\le C\Vert \varphi_{0}\Vert_{L^\infty(\mathbb R;H^3(\mathbb T))}\left\{\Vert\varphi_{x}\Vert_{L^2_\gamma(\mathbb R\times\mathbb T)}^2+\Vert\varphi_{t}\Vert_{L^2_\gamma(\mathbb R\times\mathbb T)}^2\right\}\,,
\end{split}\\
\\
\begin{split}
-\int_{\mathbb R\times\mathbb T}&\phi_{0,xx}\phi_{x,\gamma}\phi_{t,\gamma}\,dx\,dt\le\int_{\mathbb R}\Vert \phi_{0,xx}\Vert_{L^\infty(\mathbb T)}\Vert\phi_{x,\gamma}\Vert_{L^2(\mathbb T)}\Vert\phi_{t,\gamma}\Vert_{L^2(\mathbb T)}\,dt\\
&\le C\int_{\mathbb R}\Vert \varphi_{0,xxx}\Vert_{L^2(\mathbb T)}\left\{\Vert\varphi_{x,\gamma}\Vert_{L^2(\mathbb T)}^2+\Vert\varphi_{t,\gamma}\Vert_{L^2(\mathbb T)}^2\right\}\,dt\\
&\le C\Vert\varphi_{0}\Vert_{L^\infty(\mathbb R;H^3(\mathbb T))}\left\{\Vert\varphi_{x}\Vert_{L^2_\gamma(\mathbb R\times\mathbb T)}^2+\Vert\varphi_{t}\Vert_{L^2_\gamma(\mathbb R\times\mathbb T)}^2\right\}\,.
\end{split}
\end{array}
\end{equation}
Concerning the third and fourth integrals in \eqref{rappres_int_2.5.1}, firstly we use again H\"{older} and Young's inequalities to get
\begin{equation}\label{stima_int_2.5.1.3-4}
\begin{array}{ll}
\displaystyle\int_{\mathbb R\times\mathbb T}\varphi_{0,xxx}\phi_\gamma\varphi_{t,\gamma}\,dx\,dt\le\int_{\mathbb R}\Vert \varphi_{0,xxx}\Vert_{L^2(\mathbb T)}\Vert\phi_\gamma\Vert_{L^\infty(\mathbb T)}\Vert\varphi_{t,\gamma}\Vert_{L^2(\mathbb T)}\,dt\,,\\
\\
\begin{split}
-\int_{\mathbb R\times\mathbb T}&\phi_{0,xxx}\phi_\gamma\phi_{t,\gamma}\,dx\,dt\le\int_{\mathbb R}\Vert \phi_{0,xxx}\Vert_{L^2(\mathbb T)}\Vert\phi_\gamma\Vert_{L^\infty(\mathbb T)}\Vert\phi_{t,\gamma}\Vert_{L^2(\mathbb T)}\,dt\\
&\le \int_{\mathbb R}\Vert \varphi_{0,xxx}\Vert_{L^2(\mathbb T)}\Vert\phi_\gamma\Vert_{L^\infty(\mathbb T)}\Vert\varphi_{t,\gamma}\Vert_{L^2(\mathbb T)}\,dt\,.
\end{split}
\end{array}
\end{equation}
Since the spatial mean of $\phi$ is zero, we apply \eqref{sobolev-poincare} to $\phi_\gamma$ and the $L^2-$continuity of the Hilbert transform to estimate
 \begin{equation}\label{sobolev-poincare-phi}
\Vert\phi_\gamma\Vert_{L^\infty(\mathbb T)}\le C\Vert\phi_{x,\gamma}\Vert_{L^2(\mathbb T)}\le C\Vert\varphi_{x,\gamma}\Vert_{L^2(\mathbb T)}\,.
\end{equation}
Combining \eqref{stima_int_2.5.1.3-4}, \eqref{sobolev-poincare-phi} and using once again Young's inequality and the Hilbert transform properties in Section \ref{trasf_hilbert} we get
\begin{equation}\label{stima_int_2.5.1.3-4'}
\begin{array}{ll}
\begin{split}
\int_{\mathbb R\times\mathbb T}&\varphi_{0,xxx}\phi_\gamma\varphi_{t,\gamma}\,dx\,dt\le C\int_\mathbb R\Vert \varphi_{0,xxx}\Vert_{L^2(\mathbb T)}\Vert\varphi_{x,\gamma}\Vert_{L^2(\mathbb T)}\Vert\varphi_{t,\gamma}\Vert_{L^2(\mathbb T)}\,dt\\
&\le C\Vert \varphi_{0}\Vert_{L^\infty(\mathbb R;H^3(\mathbb T))}\left\{\Vert\varphi_{x}\Vert_{L^2_\gamma(\mathbb R\times\mathbb T)}^2+\Vert\varphi_{t}\Vert_{L^2_\gamma(\mathbb R\times\mathbb T)}^2\right\}\,,
\end{split}\\
\\
\begin{split}
-\int_{\mathbb R\times\mathbb T}&\phi_{0,xxx}\phi_\gamma\phi_{t,\gamma}\,dx\,dt\le C \int_{\mathbb R}\Vert \varphi_{0,xxx}\Vert_{L^2(\mathbb T)}\Vert\varphi_{x,\gamma}\Vert_{L^2(\mathbb T)}\Vert\varphi_{t,\gamma}\Vert_{L^2(\mathbb T)}\,dt\\
&\le C\Vert\varphi_{0}\Vert_{L^\infty(\mathbb R;H^3(\mathbb T))}\left\{\Vert\varphi_{x}\Vert_{L^2_\gamma(\mathbb R\times\mathbb T)}^2+\Vert\varphi_{t}\Vert_{L^2_\gamma(\mathbb R\times\mathbb T)}^2\right\}\,.
\end{split}
\end{array}
\end{equation}
Gathering estimates \eqref{stima_int_2.5.1.1-2}, \eqref{stima_int_2.5.1.3-4'}, we obtain the following estimate of $\mathcal I_1$
\begin{equation}\label{stima_int_2.5.1}
\mathcal I_1\le C\Vert\varphi_{0}\Vert_{L^\infty(\mathbb R;H^3(\mathbb T))}\left\{\Vert\varphi_{x}\Vert_{L^2_\gamma(\mathbb R\times\mathbb T)}^2+\Vert\varphi_{t}\Vert_{L^2_\gamma(\mathbb R\times\mathbb T)}^2\right\}\,.
\end{equation}
{\it The estimate of $\mathcal I_2$}: The integral $\mathcal I_2$ is the most difficult to handle; indeed, even after integration by parts, it still contains second order derivatives of $\varphi$ that we can't estimate directly. We will manage to represent $\mathcal I_2$ as the sum of integral terms involving only first order derivatives of $\varphi$ to which operators of the kind considered in Lemma \ref{lemma_comm} are applied.

We first integrate by parts in $x$ and use the identity \eqref{identita} (with $f=\varphi$) to get
\begin{equation}\label{rappres_int_2.5.2_prel}
\mathcal I_2=\int_{\mathbb R\times\mathbb T}\left[\phi_0;\mathbb H\right]\partial_x\phi_{x,\gamma}\partial_x\varphi_{t,\gamma}\,dx\,dt
=\mathcal I_{2,1}+\mathcal I_{2,2}\,,
\end{equation}
where
\begin{align}
&\mathcal I_{2,1}:=\gamma\int_{\mathbb R\times\mathbb T}\left[\phi_0;\mathbb H\right]\partial_x\phi_{x,\gamma}\varphi_{x,\gamma}\,dx\,dt\,,\label{I_21}\\
&\mathcal I_{2,2}:=\int_{\mathbb R\times\mathbb T }\left[\phi_0;\mathbb H\right]\partial_x\phi_{x,\gamma}\partial_t\varphi_{x,\gamma}\,dx\,dt\,.\label{I_22}
\end{align}
Let us consider $\mathcal I_{2,2}$; integration by parts and Leibniz's rule give
\begin{equation}\label{rappres_int_2.5.2}
\begin{split}
\mathcal I_{2,2}&=-\int_{\mathbb R\times\mathbb T}\partial_t\left(\left[\phi_0;\mathbb H\right]\partial_x\phi_{x,\gamma}\right)\varphi_{x,\gamma}\,dx\,dt\\
&=-\int_{\mathbb R\times\mathbb T}\left[\phi_{0,t};\mathbb H\right]\partial_x\phi_{x,\gamma}\varphi_{x,\gamma}\,dx\,dt-\int_{\mathbb R\times\mathbb T}\left[\phi_0;\mathbb H\right]\partial_t\partial_x\phi_{x,\gamma}\varphi_{x,\gamma}\,dx\,dt\\
&=-\int_{\mathbb R\times\mathbb T}\left[\phi_{0,t};\mathbb H\right]\partial_x\phi_{x,\gamma}\varphi_{x,\gamma}\,dx\,dt-\int_{\mathbb R\times\mathbb T}\partial_x\left(\left[\phi_0;\mathbb H\right]\partial_t\phi_{x,\gamma}\right)\varphi_{x,\gamma}\,dx\,dt\\
&\quad +\int_{\mathbb R\times\mathbb T}\left[\phi_{0,x};\mathbb H\right]\partial_t\phi_{x,\gamma}\varphi_{x,\gamma}\,dx\,dt\\
&=-\int_{\mathbb R\times\mathbb T}\left[\phi_{0,t};\mathbb H\right]\partial_x\phi_{x,\gamma}\varphi_{x,\gamma}\,dx\,dt+\int_{\mathbb R\times\mathbb T}\left[\phi_0;\mathbb H\right]\partial_t\phi_{x,\gamma}\partial_x\varphi_{x,\gamma}\,dx\,dt\\
&\quad +\int_{\mathbb R\times\mathbb T}\left[\phi_{0,x};\mathbb H\right]\partial_t\phi_{x,\gamma}\varphi_{x,\gamma}\,dx\,dt\,.
\end{split}
\end{equation}
Let us focus on the second integral $\displaystyle\int_{\mathbb R\times\mathbb T}\left[\phi_0;\mathbb H\right]\partial_t\phi_{x,\gamma}\partial_x\varphi_{x,\gamma}\,dx\,dt$ in the above representation of $\mathcal I_{2,2}$. Using formulas \eqref{integraleH} and \eqref{aggiunto} we get
\begin{align*}
\int_{\mathbb R\times\mathbb T}&\left[\phi_0;\mathbb H\right]\partial_t\phi_{x,\gamma}\partial_x\varphi_{x,\gamma}\,dx\,dt\\
&=\int_{\mathbb R\times\mathbb T}\partial_t\phi_{x,\gamma}\left[\phi_0\,;\,\mathbb H\right]\partial_x\varphi_{x,\gamma}\,dx\,dt=-\int_{\mathbb R\times\mathbb T}\partial_t\varphi_{x,\gamma}\mathbb H\left[\left[\phi_0\,;\,\mathbb H\right]\partial_x\varphi_{x,\gamma}\right]\,dx\,dt\\
&=-\int_{\mathbb R\times\mathbb T}\partial_t\varphi_{x,\gamma}\left[\phi_0\,;\,\mathbb H\right]\mathbb H\left[\partial_x\varphi_{x,\gamma}\right]\,dx\,dt-\int_{\mathbb R\times\mathbb T}\partial_t\varphi_{x,\gamma}\left[\mathbb H\,;\,\left[\phi_0\,;\,\mathbb H\right]\right]\partial_x\varphi_{x,\gamma}\,dx\,dt\\
&=-\int_{\mathbb R\times\mathbb T}\partial_t\varphi_{x,\gamma}\left[\phi_0\,;\,\mathbb H\right]\partial_x\phi_{x,\gamma}\,dx\,dt-\int_{\mathbb R\times\mathbb T}\partial_t\varphi_{x,\gamma}\left[\mathbb H\,;\,\left[\phi_0\,;\,\mathbb H\right]\right]\partial_x\varphi_{x,\gamma}\,dx\,dt\,.
\end{align*}
Then we use once again the identity \eqref{identita} to exchange the $x$ and $t-$derivatives in $\partial_t\varphi_{x,\gamma}$ involved in the two integrals just above and find, after integration by parts in $x$ (see also \eqref{int_2.5.2}, \eqref{I_21}),
\begin{equation}\label{rappres_int_2.5.2_bis}
\begin{split}
\int_{\mathbb R\times\mathbb T}&\left[\phi_0;\mathbb H\right]\partial_t\phi_{x,\gamma}\partial_x\varphi_{x,\gamma}\,dx\,dt\\
&=-\int_{\mathbb R\times\mathbb T}(-\gamma\varphi_{x,\gamma}+\partial_x\varphi_{t,\gamma})\left[\phi_0\,;\,\mathbb H\right]\partial_x\phi_{x,\gamma}\,dx\,dt\\
&\quad -\int_{\mathbb R\times\mathbb T}(-\gamma\varphi_{x,\gamma}+\partial_x\varphi_{t,\gamma})\left[\mathbb H\,;\,\left[\phi_0\,;\,\mathbb H\right]\right]\partial_x\varphi_{x,\gamma}\,dx\,dt\\
&=\gamma\int_{\mathbb R\times\mathbb T}\varphi_{x,\gamma}\left[\phi_0\,;\,\mathbb H\right]\partial_x\phi_{x,\gamma}\,dx\,dt+\gamma\int_{\mathbb R\times\mathbb T}\varphi_{x,\gamma}\left[\mathbb H\,;\,\left[\phi_0\,;\,\mathbb H\right]\right]\partial_x\varphi_{x,\gamma}\,dx\,dt\\
&\quad +\int_{\mathbb R\times\mathbb T}\varphi_{t,\gamma}\left(\left[\phi_0\,;\,\mathbb H\right]\partial_x\phi_{x,\gamma}\right)_x\,dx\,dt
+\int_{\mathbb R\times\mathbb T}\varphi_{t,\gamma}\left(\left[\mathbb H\,;\,\left[\phi_0\,;\,\mathbb H\right]\right]\partial_x\varphi_{x,\gamma}\right)_x\,dx\,dt\\
&=\mathcal I_{2,1}+\gamma\int_{\mathbb R\times\mathbb T}\varphi_{x,\gamma}\left[\mathbb H\,;\,\left[\phi_0\,;\,\mathbb H\right]\right]\partial_x\varphi_{x,\gamma}\,dx\,dt\\
&\quad -\mathcal I_2+\int_{\mathbb R\times\mathbb T}\varphi_{t,\gamma}\left(\left[\mathbb H\,;\,\left[\phi_0\,;\,\mathbb H\right]\right]\partial_x\varphi_{x,\gamma}\right)_x\,dx\,dt\,.
\end{split}
\end{equation}
Substituting \eqref{rappres_int_2.5.2_bis} in \eqref{rappres_int_2.5.2} we get
\begin{equation}\label{rappres_int_2.5.2_ter}
\begin{split}
\mathcal I_{2,2}&=-\int_{\mathbb R\times\mathbb T}\left[\phi_{0,t};\mathbb H\right]\partial_x\phi_{x,\gamma}\varphi_{x,\gamma}\,dx\,dt+\mathcal I_{2,1}\\
&\quad +\gamma\int_{\mathbb R\times\mathbb T}\varphi_{x,\gamma}\left[\mathbb H\,;\,\left[\phi_0\,;\,\mathbb H\right]\right]\partial_x\varphi_{x,\gamma}\,dx\,dt-\mathcal I_2\\
&\quad +\int_{\mathbb R\times\mathbb T}\varphi_{t,\gamma}\left(\left[\mathbb H\,;\,\left[\phi_0\,;\,\mathbb H\right]\right]\partial_x\varphi_{x,\gamma}\right)_x\,dx\,dt +\int_{\mathbb R\times\mathbb T}\left[\phi_{0,x};\mathbb H\right]\partial_t\phi_{x,\gamma}\varphi_{x,\gamma}\,dx\,dt\,.
\end{split}
\end{equation}
Then substituting \eqref{rappres_int_2.5.2_ter} into \eqref{rappres_int_2.5.2_prel} gives
\begin{equation*}
\begin{split}
\mathcal I_2&=2\mathcal I_{2,1}-\int_{\mathbb R\times\mathbb T}\left[\phi_{0,t};\mathbb H\right]\partial_x\phi_{x,\gamma}\varphi_{x,\gamma}\,dx\,dt+\gamma\int_{\mathbb R\times\mathbb T}\varphi_{x,\gamma}\left[\mathbb H\,;\,\left[\phi_0\,;\,\mathbb H\right]\right]\partial_x\varphi_{x,\gamma}\,dx\,dt\\
&\quad -\mathcal I_2+\int_{\mathbb R\times\mathbb T}\varphi_{t,\gamma}\left(\left[\mathbb H\,;\,\left[\phi_0\,;\,\mathbb H\right]\right]\partial_x\varphi_{x,\gamma}\right)_x\,dx\,dt +\int_{\mathbb R\times\mathbb T}\left[\phi_{0,x};\mathbb H\right]\partial_t\phi_{x,\gamma}\varphi_{x,\gamma}\,dx\,dt\,,
\end{split}
\end{equation*}
hence, using also \eqref{identita} with $f=\phi$ and integrating by parts in $x$,
\begin{equation}\label{rappres_int_2.5.2'}
\begin{split}
\displaystyle\mathcal I_2&=\mathcal I_{2,1}-\frac12\int_{\mathbb R\times\mathbb T}\left[\phi_{0,t};\mathbb H\right]\partial_x\phi_{x,\gamma}\varphi_{x,\gamma}\,dx\,dt+\frac{\gamma}{2}\int_{\mathbb R\times\mathbb T}\varphi_{x,\gamma}\left[\mathbb H\,;\,\left[\phi_0\,;\,\mathbb H\right]\right]\partial_x\varphi_{x,\gamma}\,dx\,dt\\
&\quad+\frac12\int_{\mathbb R\times\mathbb T}\varphi_{t,\gamma}\left(\left[\mathbb H\,;\,\left[\phi_0\,;\,\mathbb H\right]\right]\partial_x\varphi_{x,\gamma}\right)_x\,dx\,dt+\frac12\int_{\mathbb R\times\mathbb T}\left[\phi_{0,x};\mathbb H\right]\partial_t\phi_{x,\gamma}\varphi_{x,\gamma}\,dx\,dt\\
&=\mathcal I_{2,1}-\frac12\int_{\mathbb R\times\mathbb T}\left[\phi_{0,t};\mathbb H\right]\partial_x\phi_{x,\gamma}\varphi_{x,\gamma}\,dx\,dt-\frac{\gamma}{2}\int_{\mathbb R\times\mathbb T}\varphi_{\gamma}\left(\left[\mathbb H\,;\,\left[\phi_0\,;\,\mathbb H\right]\right]\partial_x\varphi_{x,\gamma}\right)_x\,dx\,dt\\
&\quad+\frac12\int_{\mathbb R\times\mathbb T}\varphi_{t,\gamma}\left(\left[\mathbb H\,;\,\left[\phi_0\,;\,\mathbb H\right]\right]\partial_x\varphi_{x,\gamma}\right)_x\,dx\,dt-\frac{\gamma}2\int_{\mathbb R\times\mathbb T}\left[\phi_{0,x};\mathbb H\right]\phi_{x,\gamma}\varphi_{x,\gamma}\,dx\,dt\\
&\quad+\frac12\int_{\mathbb R\times\mathbb T}\left[\phi_{0,x};\mathbb H\right]\partial_x\phi_{t,\gamma}\varphi_{x,\gamma}\,dx\,dt\,.
\end{split}
\end{equation}
Let us now rewrite the integral $\mathcal I_{2,1}$, see \eqref{I_21}. In fact, the coefficient $\gamma$ in front of it does not allow a direct estimate.
We expand the commutator $\left[\phi_0;\mathbb H\right]$, then use \eqref{calcolo}, \eqref{integraleH} and integration by parts to write
\begin{equation}\label{rappres_int_2.5.2.1'}
\begin{split}
\int_{\mathbb R\times\mathbb T}&\left[\phi_0;\mathbb H\right]\partial_x\phi_{x,\gamma}\varphi_{x,\gamma}\,dx\,dt\\
&=\int_{\mathbb R\times\mathbb T}\phi_0\,\mathbb H[\partial_x\phi_{x,\gamma}]\varphi_{x,\gamma}\,dx\,dt-\int_{\mathbb R\times\mathbb T}\mathbb H\left[\phi_0\,\partial_x\phi_{x,\gamma}\right]\varphi_{x,\gamma}\,dx\,dt\\
&=-\int_{\mathbb R\times\mathbb T}\phi_0\,\partial_x\varphi_{x,\gamma}\varphi_{x,\gamma}\,dx\,dt+\int_{\mathbb R\times\mathbb T}\phi_0\,\partial_x\phi_{x,\gamma}\phi_{x,\gamma}\,dx\,dt\\
&=-\frac12\int_{\mathbb R\times\mathbb T}\phi_0\,\partial_x\left(\left\vert\varphi_{x,\gamma}\right\vert^2\right)\,dx\,dt+\frac12\int_{\mathbb R\times\mathbb T}\phi_0\,\partial_x\left(\left\vert\phi_{x,\gamma}\right\vert^2\right)\,dx\,dt\\
&=\frac12\int_{\mathbb R\times\mathbb T}\phi_{0,x}\,\left(
\left\vert\varphi_{x,\gamma}\right\vert^2
%\,dx\,dt-\frac12\int_{\mathbb T}\phi_{0,x}\,
-\left\vert\phi_{x,\gamma}
\right\vert^2
\right)\,dx\,dt\,.
\end{split}
\end{equation}
We use \eqref{prodotto_hilbert} with $f=g=\varphi_{x,\gamma}$, together with the first formula in \eqref{calcolo}  (notice that  $fg-\mathbb H[f]\mathbb H[g]$ has zero mean as long as $f$ and $g$ have zero mean), to get
\begin{equation*}
\left\vert\phi_{x,\gamma}\right\vert^2=\left\vert\mathbb H[\varphi_{x,\gamma}]\right\vert^2=\left\vert\varphi_{x,\gamma}\right\vert^2+2\mathbb H\left[\varphi_{x,\gamma}\phi_{x,\gamma}\right]\,.
\end{equation*}
Thus substituting into \eqref{rappres_int_2.5.2.1'} we find
\begin{equation*}%\label{rappres_int_2.5.2.1''}
\begin{split}
\int_{\mathbb R\times\mathbb T}&\left[\phi_0;\mathbb H\right]\partial_x\phi_{x,\gamma}\varphi_{x,\gamma}\,dx\,dt
%&=\frac12\int_{\mathbb R\times\mathbb T}\phi_{0,x}\,\left\vert\varphi_{x,\gamma}\right\vert^2\,dx-\frac12\int_{\mathbb R\times\mathbb T}\phi_{0,x}\,\left\vert\phi_{x,\gamma}\right\vert^2\,dx\,dt\\
=-\int_{\mathbb R\times\mathbb T}\phi_{0,x}\mathbb H\left[\varphi_{x,\gamma}\phi_{x,\gamma}\right]\,dx\,dt\\
&=\int_{\mathbb R\times\mathbb T}\mathbb H[\phi_{0,x}]\varphi_{x,\gamma}\phi_{x,\gamma}\,dx\,dt
=-\int_{\mathbb R\times\mathbb T}\varphi_{0,x}\varphi_{x,\gamma}\phi_{x,\gamma}\,dx\,dt\,.
\end{split}
\end{equation*}
We may further rewrite (and it is convenient doing so) the last integral above in terms of a commutator operator; indeed we may compute
\begin{equation*}
\begin{split}
-\int_{\mathbb R\times\mathbb T}&\varphi_{0,x}\varphi_{x,\gamma}\phi_{x,\gamma}\,dx\,dt\\
&=-\int_{\mathbb R\times\mathbb T}\varphi_{0,x}\varphi_{x,\gamma}\mathbb H\left[\varphi_{x,\gamma}\right]\,dx\,dt+\int_{\mathbb R\times\mathbb T}\varphi_{x,\gamma}\mathbb  H\left[\varphi_{0,x}\varphi_{x,\gamma}\right]\,dx\,dt-\int_{\mathbb R\times\mathbb T}\varphi_{x,\gamma}\mathbb  H\left[\varphi_{0,x}\varphi_{x,\gamma}\right]\,dx\,dt\\
&=\int_{\mathbb R\times\mathbb T}\left[\mathbb H;\varphi_{0,x}\right]\partial_x\varphi_{\gamma}\varphi_{x,\gamma}\,dx\,dt+\int_{\mathbb R\times\mathbb T}\varphi_{0,x}\varphi_{x,\gamma}\phi_{x,\gamma}\,dx\,dt
\end{split}
\end{equation*}
hence
\begin{equation*}
-\int_{\mathbb R\times\mathbb T}\varphi_{0,x}\varphi_{x,\gamma}\phi_{x,\gamma}\,dx\,dt=\frac12\int_{\mathbb R\times\mathbb T}\left[\mathbb H;\varphi_{0,x}\right]\partial_x\varphi_{\gamma}\varphi_{x,\gamma}\,dx\,dt
\end{equation*}
and
\begin{equation*}
\int_{\mathbb R\times\mathbb T}\left[\phi_0;\mathbb H\right]\partial_x\phi_{x,\gamma}\varphi_{x,\gamma}\,dx\,dt=-\int_{\mathbb R\times\mathbb T}\varphi_{0,x}\varphi_{x,\gamma}\phi_{x,\gamma}\,dx\,dt=\frac12\int_{\mathbb R\times\mathbb T}\left[\mathbb H;\varphi_{0,x}\right]\partial_x\varphi_{\gamma}\varphi_{x,\gamma}\,dx\,dt\,.
\end{equation*}
In conclusion we obtain
\begin{equation}\label{rappres_I_21}
\mathcal I_{2,1}=\frac{\gamma}2\int_{\mathbb R\times\mathbb T}\left[\mathbb H;\varphi_{0,x}\right]\partial_x\varphi_{\gamma}\varphi_{x,\gamma}\,dx\,dt\,,
\end{equation}
then substituting \eqref{rappres_I_21} into \eqref{rappres_int_2.5.2'} we get
\begin{equation}\label{rappres_int_2.5.2''}
\begin{split}
\displaystyle\mathcal I_2&=\frac{\gamma}2\int_{\mathbb R\times\mathbb T}\left[\mathbb H;\varphi_{0,x}\right]\partial_x\varphi_{\gamma}\varphi_{x,\gamma}\,dx\,dt-\frac12\int_{\mathbb R\times\mathbb T}\left[\phi_{0,t};\mathbb H\right]\partial_x\phi_{x,\gamma}\varphi_{x,\gamma}\,dx\,dt\\
&-\frac{\gamma}{2}\int_{\mathbb R\times\mathbb T}\varphi_{\gamma}\left(\left[\mathbb H\,;\,\left[\phi_0\,;\,\mathbb H\right]\right]\partial_x\varphi_{x,\gamma}\right)_x\,dx\,dt+\frac12\int_{\mathbb R\times\mathbb T}\varphi_{t,\gamma}\left(\left[\mathbb H\,;\,\left[\phi_0\,;\,\mathbb H\right]\right]\partial_x\varphi_{x,\gamma}\right)_x\,dx\,dt\\
&-\frac{\gamma}2\int_{\mathbb R\times\mathbb T}\left[\phi_{0,x};\mathbb H\right]\partial_x\phi_{\gamma}\varphi_{x,\gamma}\,dx\,dt+\frac12\int_{\mathbb R\times\mathbb T}\left[\phi_{0,x};\mathbb H\right]\partial_x\phi_{t,\gamma}\varphi_{x,\gamma}\,dx\,dt=\sum\limits_{j=1}^6 I_j\,.
\end{split}
\end{equation}
We are now in the position to provide a suitable estimate of $\mathcal I_2$.

Concerning the first integral $I_1:=\displaystyle\frac{\gamma}2\int_{\mathbb R\times\mathbb T}\left[\mathbb H;\varphi_{0,x}\right]\partial_x\varphi_{\gamma}\varphi_{x,\gamma}\,dx\,dt$ in the right-hand side of \eqref{rappres_int_2.5.2''}, we apply H\"{o}lder and Young's inequalities and use the estimates \eqref{stima_comm_2} (with $s=1$, $v=\varphi_{0,x}$ and $f=\varphi_\gamma$), \eqref{stima_metivier} to get
\begin{equation*}%\label{stima_I1}
\begin{split}
I_1&\le\frac{\gamma}{2}\int_{\mathbb R}\left\Vert\left[\mathbb H\,;\,\varphi_{0,x}\right]\partial_x\varphi_{\gamma}\right\Vert_{L^2(\mathbb T)}\Vert\varphi_{x,\gamma}\Vert_{L^2(\mathbb T)}\,dt\\
&\le C\gamma\int_{\mathbb R}\Vert\varphi_{0,xx}\Vert_{H^1(\mathbb T)}\Vert\varphi_\gamma\Vert_{L^2(\mathbb T)}\Vert\varphi_{x,\gamma}\Vert_{L^2(\mathbb T)}\,dt\\
&\le C\int_{\mathbb R}\Vert\varphi_{0,xx}\Vert_{H^1(\mathbb T)}\Vert\varphi_{t,\gamma}\Vert_{L^2(\mathbb T)}\Vert\varphi_{x,\gamma}\Vert_{L^2(\mathbb T)}\,dt\\
&\le C\Vert\varphi_{0}\Vert_{L^\infty(\mathbb R; H^3(\mathbb T))}\left\{\Vert\varphi_{t}\Vert^2_{L^2_\gamma(\mathbb R\times\mathbb T)}+\Vert\varphi_{x}\Vert^2_{L^2_\gamma(\mathbb R\times\mathbb T)}\right\}\,.
\end{split}
\end{equation*}
To estimate the fifth integral $I_5:=\displaystyle-\frac{\gamma}2\int_{\mathbb R\times\mathbb T}\left[\phi_{0,x};\mathbb H\right]\partial_x\phi_{\gamma}\varphi_{x,\gamma}\,dx\,dt$ in the right-hand side of \eqref{rappres_int_2.5.2''} we use exactly the same arguments as before to obtain
\begin{equation*}%\label{stima_I5}
I_5\le C\Vert\varphi_{0}\Vert_{L^\infty(\mathbb R; H^3(\mathbb T))}\left\{\Vert\varphi_{t}\Vert^2_{L^2_\gamma(\mathbb R\times\mathbb T)}+\Vert\varphi_{x}\Vert^2_{L^2_\gamma(\mathbb R\times\mathbb T)}\right\}\,.
\end{equation*}
The integrals $I_2:=\displaystyle-\frac12\int_{\mathbb R\times\mathbb T}\left[\phi_{0,t};\mathbb H\right]\partial_x\phi_{x,\gamma}\varphi_{x,\gamma}\,dx\,dt$ and $I_6:=\displaystyle\frac12\int_{\mathbb R\times\mathbb T}\left[\phi_{0,x};\mathbb H\right]\partial_x\phi_{t,\gamma}\varphi_{x,\gamma}\,dx\,dt$ in the right-hand side of \eqref{rappres_int_2.5.2''} can be treated exactly in the same manner as above, applying H\"{o}lder and Young's inequalities, using the estimate \eqref{stima_comm_2} (with $s=1$, and $v=\phi_{0,t}$, $f=\phi_{x,\gamma}$ or $v=\phi_{0,x}$, $f=\phi_{t,\gamma}$ respectively in $I_2$ or $I_6$) and the $L^2-$continuity of $\mathbb H$ to get
\begin{equation*}%\label{stima_I2_I6}
\begin{split}
\begin{split}
I_2&\le\frac12\int_{\mathbb R}\Vert\left[\phi_{0,t};\mathbb H\right]\partial_x\phi_{x,\gamma}\Vert_{L^2(\mathbb T)}\Vert\varphi_{x,\gamma}\Vert_{L^2(\mathbb T)}\,dt\le C\int_{\mathbb R}\Vert\phi_{0,tx}\Vert_{H^1(\mathbb T)}\Vert\phi_{x,\gamma}\Vert_{L^2(\mathbb T)}\Vert\varphi_{x,\gamma}\Vert_{L^2(\mathbb T)}\,dt\\
%&\le C\int_{\mathbb R}\Vert\varphi_{0,tx}\Vert_{H^1(\mathbb T)}\Vert\varphi_{x,\gamma}\Vert^2_{L^2(\mathbb T)}\,dt\\
&\le C\Vert\varphi_{0,t}\Vert_{L^\infty(\mathbb R;H^2(\mathbb T))}\Vert\varphi_{x}\Vert^2_{L^2_\gamma(\mathbb R\times\mathbb T)}\,;
\end{split}\\
\\
\begin{split}
I_6&\le\frac12\int_{\mathbb R}\Vert\left[\phi_{0,x};\mathbb H\right]\partial_x\phi_{t,\gamma}\Vert_{L^2(\mathbb T)}\Vert\varphi_{x,\gamma}\Vert_{L^2(\mathbb T)}\,dt\le C\int_{\mathbb R}\Vert\phi_{0,xx}\Vert_{H^1(\mathbb T)}\Vert\phi_{t,\gamma}\Vert_{L^2(\mathbb T)}\Vert\varphi_{x,\gamma}\Vert_{L^2(\mathbb T)}\,dt\\
%&\le C\int_{\mathbb R}\Vert\varphi_{0,xx}\Vert_{H^1(\mathbb T)}\Vert\varphi_{t,\gamma}\Vert_{L^2(\mathbb T)}\Vert\varphi_{x,\gamma}\Vert_{L^2(\mathbb T)}\,dt\\
&\le C\Vert\varphi_{0}\Vert_{L^\infty(\mathbb R;H^3(\mathbb T))}\left\{\Vert\varphi_{t}\Vert^2_{L^2_\gamma(\mathbb R\times\mathbb T)}+\Vert\varphi_{x}\Vert^2_{L^2_\gamma(\mathbb R\times\mathbb T)}\right\}\,.
\end{split}
\end{split}
\end{equation*}
To estimate
\begin{equation*}
I_3:=-\frac{\gamma}{2}\int_{\mathbb R\times\mathbb T}\varphi_{\gamma}\left(\left[\mathbb H\,;\,\left[\phi_0\,;\,\mathbb H\right]\right]\partial_x\varphi_{x,\gamma}\right)_x\,dx\,dt\,,\quad I_4:=\displaystyle\frac12\int_{\mathbb R\times\mathbb T}\varphi_{t,\gamma}\left(\left[\mathbb H\,;\,\left[\phi_0\,;\,\mathbb H\right]\right]\partial_x\varphi_{x,\gamma}\right)_x\,dx\,dt
\end{equation*}
in the right-hand side of \eqref{rappres_int_2.5.2''}, we use again H\"{o}lder's inequality, the estimates \eqref{stima_comm_3} (with $s=1$, $v=\phi_{0}$ and $f=\varphi_{x,\gamma}$), \eqref{stima_metivier}, \eqref{sobolev-poincare} and the $L^2-$continuity of $\mathbb H$ to get
\begin{equation*}%\label{stima_I3_I4}
\begin{split}
\begin{split}
I_3&\le\frac{\gamma}2\int_{\mathbb R}\left\Vert \left(\left[\mathbb H\,;\,\left[\phi_0\,;\,\mathbb H\right]\right]\partial_x\varphi_{x,\gamma}\right)_x\right\Vert_{L^2(\mathbb T)}\Vert\varphi_{\gamma}\Vert_{L^2(\mathbb T)}\,dt\\
&\le C\gamma\int_{\mathbb R}\Vert\phi_{0,xx}\Vert_{H^1(\mathbb T)}\Vert\varphi_{x,\gamma}\Vert_{L^2(\mathbb T)}\Vert\varphi_{\gamma}\Vert_{L^2(\mathbb T)}\,dt\\
&\le C\int_{\mathbb R}\Vert\varphi_{0,xx}\Vert_{H^1(\mathbb T)}\Vert\varphi_{x,\gamma}\Vert_{L^2(\mathbb T)}\Vert\varphi_{t,\gamma}\Vert_{L^2(\mathbb T)}\,dt\\
&\le C\Vert\varphi_{0}\Vert_{L^\infty(\mathbb R;H^3(\mathbb T))}\left\{\Vert\varphi_{t}\Vert^2_{L^2_\gamma(\mathbb R\times\mathbb T)}+\Vert\varphi_{x}\Vert^2_{L^2_\gamma(\mathbb R\times\mathbb T)}\right\}\,;
\end{split}\\
\\
\begin{split}
I_4&\le\frac12\int_{\mathbb R}\left\Vert \left(\left[\mathbb H\,;\,\left[\phi_0\,;\,\mathbb H\right]\right]\partial_x\varphi_{x,\gamma}\right)_x\right\Vert_{L^2(\mathbb T)}\Vert\varphi_{t,\gamma}\Vert_{L^2(\mathbb T)}\,dt\\
&\le C\int_{\mathbb R}\Vert\varphi_{0,xx}\Vert_{H^1(\mathbb T)}\Vert\varphi_{x,\gamma}\Vert_{L^2(\mathbb T)}\Vert\varphi_{t,\gamma}\Vert_{L^2(\mathbb T)}\,dt\\
&\le C\Vert\varphi_{0}\Vert_{L^\infty(\mathbb R;H^3(\mathbb T))}\left\{\Vert\varphi_{t}\Vert^2_{L^2_\gamma(\mathbb R\times\mathbb T)}+\Vert\varphi_{x}\Vert^2_{L^2_\gamma(\mathbb R\times\mathbb T)}\right\}\,.
\end{split}
\end{split}
\end{equation*}
Gathering the preceding estimates yields the following estimate of $\mathcal I_2$
\begin{equation*}%\label{stima_int_2.5.2}
\mathcal I_2\le C\left\{\Vert\varphi_{0,t}\Vert_{L^\infty(\mathbb R;H^2(\mathbb T))}+\Vert\varphi_{0}\Vert_{L^\infty(\mathbb R;H^3(\mathbb T))}\right\}\left\{\Vert\varphi_{t}\Vert^2_{L^2_\gamma(\mathbb R\times\mathbb T)}+\Vert\varphi_{x}\Vert^2_{L^2_\gamma(\mathbb R\times\mathbb T)}\right\}\,,
\end{equation*}
from which, in view of \eqref{decomp_int_2.5} and using also \eqref{stima_int_2.5.1}, we get for the integral $\mathcal I$ in \eqref{int_2.5} the estimate
\begin{equation}\label{stima_int_2.5}
\mathcal I\le C\left\{\Vert\varphi_{0,t}\Vert_{L^\infty(\mathbb R;H^2(\mathbb T))}+\Vert\varphi_{0}\Vert_{L^\infty(\mathbb R;H^3(\mathbb T))}\right\}\left\{\Vert\varphi_{t}\Vert^2_{L^2_\gamma(\mathbb R\times\mathbb T)}+\Vert\varphi_{x}\Vert^2_{L^2_\gamma(\mathbb R\times\mathbb T)}\right\}\,.
\end{equation}
Gathering the estimates \eqref{stima_int_2.1'}-\eqref{stima_g}, \eqref{stima_int_2.5}, from \eqref{equazione_lin3} we derive
\begin{equation}\label{stima_lineare1}
\begin{split}
\gamma &\Vert\varphi_{t}\Vert^2_{L^2_\gamma(\mathbb R\times\mathbb T)}+\gamma\int_{\mathbb R\times\mathbb T}\left(\mu-2\phi_{0,x}\right)\left\vert\varphi_{x,\gamma}\right\vert^2\,dx\,dt\\
&\le C\left\{\Vert\varphi_{0,t}\Vert_{L^\infty(\mathbb R;H^2(\mathbb T))}+\Vert\varphi_{0}\Vert_{L^\infty(\mathbb R;H^3(\mathbb T))}\right\}\left\{\Vert\varphi_{t}\Vert^2_{L^2_\gamma(\mathbb R\times\mathbb T)}+\Vert\varphi_{x}\Vert^2_{L^2_\gamma(\mathbb R\times\mathbb T)}\right\}\\
&\quad +\frac{\gamma}{2}\Vert\varphi_{t}\Vert^2_{L^2_\gamma(\mathbb R\times\mathbb T)}+\frac{1}{2\gamma}\Vert g\Vert^2_{L^2_\gamma(\mathbb R\times\mathbb T)}\,.
\end{split}
\end{equation}
The estimate \eqref{stima_apriori_1} follows at once from \eqref{stima_lineare1} by absorbing in the left-hand side the term $\displaystyle\frac{\gamma}{2}\Vert\varphi_{t}\Vert^2_{L^2_\gamma(\mathbb R\times\mathbb T)}$, then using the assumption \eqref{stability_unif} to bound from below the integral $\displaystyle\int_{\mathbb R\times\mathbb T}\left(\mu-2\phi_{0,x}\right)\left\vert\varphi_{x,\gamma}\right\vert^2\,dx\,dt$ in the left-hand side of \eqref{stima_lineare1} by
\begin{equation*}
\frac{\delta}{2}\Vert\varphi_{x}\Vert^2_{L^2_\gamma(\mathbb R\times\mathbb T)}\,,
\end{equation*}
and finally taking $\gamma_0=\gamma_0(\delta, \Vert\varphi_{0,t}\Vert_{L^\infty(\mathbb R;H^2(\mathbb T))}, \Vert\varphi_{0}\Vert_{L^\infty(\mathbb R;H^3(\mathbb T))})$ sufficiently large such that
\begin{equation*}
\min\left\{1,\delta\right\}\gamma-2C\left\{\Vert\varphi_{0,t}\Vert_{L^\infty(\mathbb R;H^2(\mathbb T))}+\Vert\varphi_{0}\Vert_{L^\infty(\mathbb R;H^3(\mathbb T))}\right\}\ge\frac12\min\left\{1,\delta\right\}\gamma\,.
\end{equation*}
We can take for instance $\gamma_0=\displaystyle\frac{4C\left\{\Vert\varphi_{0,t}\Vert_{L^\infty(\mathbb R;H^2(\mathbb T))}+\Vert\varphi_{0}\Vert_{L^\infty(\mathbb R;H^3(\mathbb T))}\right\}}{\min\left\{1,\delta\right\}}$ and $C_0=\displaystyle\frac{2}{\min\{1,\delta\}}$.
\end{proof}
%%%%%%%%%%%%%%%%%%%%%%%%%%%%%%%%%%%%%%%%%%%%%%%
%%%%%%%%%%%%%%%%%%%%%%%%%%%%%%%%%%%%%%%%%%%%%%%
%%%%%%%%%%%%%%%%%%%%%%%%%%%%%%%%%%%%%%%%%%%%%%%
%%%%%%%%%%%%%%%%%%%%%%%%%%%%%%%%%%%%%%%%%%%%%%%
%%%%%%%%%%%%%%%%%%%%%%%%%%%%%%%%%%%%%%%%%%%%%%%
\subsection{Well-posedness of the linearized problem}
Theorem \ref{teorema_1} only gives an a priori estimate for any sufficiently smooth function $\varphi^\prime$ in terms of $g:=\mathbb{L}^\prime[\varphi_0]\varphi^\prime$. Using standard methods we can also obtain the existence of the solution.
\begin{theorem}\label{teorema_2}
Let the basic state $\varphi_0:\mathbb R\times\mathbb T\rightarrow\mathbb R$, with zero spatial mean, satisfy
\eqref{regolarita_phi0}, \eqref{stability_unif}, and let $\gamma_0\ge 1$, $C_0>0$ be the constants of Theorem \ref{teorema_1}.
For every $\gamma\ge \gamma_0$ and $g\in{L^2_\gamma(\mathbb R\times\mathbb T)}$ there exists a unique solution $\varphi^\prime$ with zero spatial mean of \eqref{equazione_lin} (expanded form in \eqref{equazione_lin1}) such that $\varphi^\prime_{t}\in{L^2_\gamma(\mathbb R\times\mathbb T)}, \varphi^\prime_{x}\in{L^2_\gamma(\mathbb R\times\mathbb T)}$. Moreover, $\varphi^\prime$
satisfies
the a priori estimate
\eqref{stima_apriori_1}.
\end{theorem}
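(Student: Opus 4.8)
The plan is to derive the existence of the solution from the a priori estimate of Theorem~\ref{teorema_1} by the classical duality argument (in the spirit of \cite{benzoni-serre}), and then to recover the regularity of the weak solution so obtained. Uniqueness is immediate: if $\varphi'$ has zero spatial mean, $\varphi'_t,\varphi'_x\in L^2_\gamma(\mathbb R\times\mathbb T)$ and $\mathbb L'[\varphi_0]\varphi'=0$, then the a priori estimate \eqref{stima_apriori_1} (applied, if needed, to smooth approximations of $\varphi'$) yields $\varphi'_t=\varphi'_x=0$, so that $\varphi'$ is constant; the zero mean condition then forces $\varphi'\equiv0$. Thus only existence has to be proved.

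I would first conjugate away the exponential weight by setting $v:=\varphi'_\gamma=e^{-\gamma t}\varphi'$, so that \eqref{equazione_lin1} becomes $\mathbb L_\gamma v=g_\gamma$ with $\mathbb L_\gamma:=e^{-\gamma t}\,\mathbb L'[\varphi_0]\,e^{\gamma t}$. Since only the term $\varphi_{tt}$ in $\mathbb L'[\varphi_0]$ carries a time derivative, $\mathbb L_\gamma$ differs from $\mathbb L'[\varphi_0]$ only through the bounded additional terms $2\gamma\partial_t+\gamma^2$, and, using the elementary identity $\|\gamma v+\partial_t v\|_{L^2}^2=\gamma^2\|v\|_{L^2}^2+\|\partial_t v\|_{L^2}^2$, the a priori estimate \eqref{stima_apriori_1} translates into the coercivity bound $\gamma\bigl(\gamma^2\|v\|_{L^2}^2+\|\partial_t v\|_{L^2}^2+\|\partial_x v\|_{L^2}^2\bigr)\le (C/\gamma)\,\|\mathbb L_\gamma v\|_{L^2}^2$, valid for every sufficiently smooth zero mean $v$ and all $\gamma\ge\gamma_0$ (all norms being taken on $\mathbb R\times\mathbb T$). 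Next I would compute the formal $L^2(\mathbb R\times\mathbb T)$ adjoint $\mathbb L_\gamma^{*}$: because $\partial_t^2$ is symmetric, $\partial_t$ is skew symmetric, and by the self adjointness relations \eqref{integraleH}, \eqref{aggiunto} for $\mathbb H$ and for the commutators $[h;\mathbb H]$, one checks that $\mathbb L_\gamma^{*}$ retains the same leading part $\partial_t^2-(\mu-2\phi_{0,x})\partial_x^2$ and the same type of lower order and commutator structure as $\mathbb L_\gamma$ (with $\gamma$ replaced by $-\gamma$ in the first order time term, and with the $x$ derivatives that hit $\phi_0$ rearranged in a way still controlled by $\|\varphi_0\|_{L^\infty(\mathbb R;H^3)}$ and $\|\varphi_{0,t}\|_{L^\infty(\mathbb R;H^2)}$). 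Repeating verbatim the computation of the proof of Theorem~\ref{teorema_1} — now testing $\mathbb L_\gamma^{*}w$ against $-\partial_t w$, integrating by parts, and invoking Lemma~\ref{lemma_comm} together with \eqref{sobolev-poincare} — should then give the dual a priori estimate $\gamma\bigl(\|\partial_t w\|_{L^2}^2+\|\partial_x w\|_{L^2}^2\bigr)\le (C'/\gamma)\,\|\mathbb L_\gamma^{*}w\|_{L^2}^2$ for all sufficiently smooth zero mean $w$ (with suitable decay in time) and all $\gamma$ larger than some $\gamma_0'\ge\gamma_0$.

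Granting the dual estimate, the construction of a weak solution is standard. Let $\mathcal D$ be the space of smooth zero mean functions on $\mathbb R\times\mathbb T$ with suitable decay in $t$, and on the subspace $\mathbb L_\gamma^{*}\mathcal D\subset L^2(\mathbb R\times\mathbb T)$ define the linear functional $\ell(\mathbb L_\gamma^{*}w):=(g_\gamma,w)_{L^2(\mathbb R\times\mathbb T)}$; it is single valued because $\mathbb L_\gamma^{*}w=0$ implies $w=0$, and, combining $|(g_\gamma,w)|\le\|g_\gamma\|_{L^2}\|w\|_{L^2}$, the Poincaré inequality $\|w\|_{L^2}\le C\|\partial_x w\|_{L^2}$ for zero mean $w$, and the dual estimate, it satisfies $|\ell(\mathbb L_\gamma^{*}w)|\le (C/\gamma)\,\|g_\gamma\|_{L^2}\,\|\mathbb L_\gamma^{*}w\|_{L^2}$. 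By Hahn--Banach and Riesz representation there is $v\in L^2(\mathbb R\times\mathbb T)$ with $(v,\mathbb L_\gamma^{*}w)_{L^2}=(g_\gamma,w)_{L^2}$ for all $w\in\mathcal D$, i.e.\ $\mathbb L_\gamma v=g_\gamma$ in the distributional sense; choosing the zero mean representative of $v$ (consistent since the zeroth Fourier mode equation is homogeneous, $g$ being, as we may assume, of zero spatial mean) and setting $\varphi':=e^{\gamma t}v$ produces a zero mean distributional solution of \eqref{equazione_lin}. To upgrade $v$ to the energy class and to obtain \eqref{stima_apriori_1}, I would regularize: for $\eps>0$ solve $\mathbb L_\gamma v^\eps+\eps(\partial_t^4+\partial_x^4)v^\eps=g_\gamma$, a fourth order elliptic problem on $\mathbb R\times\mathbb T$ solvable on the zero mean subspace with $v^\eps\in H^2$; the regularizing term contributes nothing to the energy identity obtained by testing with $\partial_t v^\eps$ (since $\int\partial_t^4 v^\eps\,\partial_t v^\eps=\int\partial_x^4 v^\eps\,\partial_t v^\eps=0$ by integration by parts over $\mathbb R\times\mathbb T$), so the computation of Theorem~\ref{teorema_1} applies to $v^\eps$ and bounds $\|\partial_t v^\eps\|_{L^2}+\|\partial_x v^\eps\|_{L^2}$ uniformly in $\eps$; letting $\eps\to0^{+}$ and using uniqueness to identify the limit with $v$ gives $\varphi'_t,\varphi'_x\in L^2_\gamma(\mathbb R\times\mathbb T)$ and the estimate \eqref{stima_apriori_1}. (Alternatively, the regularity step can be done by passing to the first order system satisfied by the pair $(\partial_x\varphi',\partial_t\varphi')$ — which, thanks to the order $-1$ commutators $[\mathbb H;\phi_{0,x}]$ and $[\phi_0;\mathbb H]$ absorbing one $x$ derivative, involves only first order $x$ derivatives — and invoking the usual weak equals strong regularity theory for such systems.)

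The step I expect to be the main obstacle is the derivation of the dual a priori estimate: one has to verify that transposing $\mathbb L_\gamma$ does not spoil the delicate commutator structure exploited in the treatment of the term $\mathcal I_2$ in the proof of Theorem~\ref{teorema_1}, i.e.\ that after transposition the doubly nested commutator $[\mathbb H;[\phi_0;\mathbb H]]$ and the second order terms of the form $\bigl([\phi_0;\mathbb H]\phi_{xx}\bigr)_x$ still appear in a shape amenable to Lemma~\ref{lemma_comm} and to the Sobolev--Poincaré bound \eqref{sobolev-poincare}. A secondary technical point is to make sure that $v^\eps$ is regular enough for all the integrations by parts of the energy argument to be legitimate, which is the reason for using the comparatively strong $\eps(\partial_t^4+\partial_x^4)$ regularization rather than a lower order one.
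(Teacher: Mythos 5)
Your proposal is a valid argument, but it follows a genuinely different route than the paper. The paper constructs the solution by a Galerkin scheme in Fourier space: it introduces the finite–dimensional Fourier projections $P_N$, solves the approximate ODE \eqref{equazione_lin1approx} for $\varphi^N=P_N\varphi'$, notes that the energy computation of Theorem~\ref{teorema_1} applies verbatim to $\varphi^N$ (because $P_N$ is a Fourier multiplier and hence commutes with $\mathbb H$ and $\partial_x$, so the commutator structure is preserved exactly), and passes to the limit. This sidesteps entirely the two points you flag as your main technical burdens: no dual a priori estimate for $\mathbb L_\gamma^{*}$ is needed, and no elliptic regularization has to be introduced and then removed. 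Your duality-plus-regularization route is the standard alternative (closer to what one does for general symmetrizable hyperbolic systems where a convenient Galerkin basis is not available), and it would work here too, but it is heavier: one has to re-run the full energy computation on the adjoint operator to obtain the dual estimate — as you correctly anticipate, checking that the doubly nested commutator $\left[\mathbb H\,;\,\left[\phi_0\,;\,\mathbb H\right]\right]$ and the terms $\bigl([\phi_0;\mathbb H]\phi_{xx}\bigr)_x$ survive transposition in a form amenable to Lemma~\ref{lemma_comm} and \eqref{sobolev-poincare} is exactly the crux, even though formulas \eqref{integraleH}, \eqref{aggiunto} suggest it should go through — and one also has to justify the unique solvability of the fourth-order regularized problem $\mathbb L_\gamma v^\eps+\eps(\partial_t^4+\partial_x^4)v^\eps=g_\gamma$ with variable coefficients and nonlocal terms, plus the passage $\eps\to 0^+$. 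Two small precision points on your write-up: the energy multiplier matching the proof of Theorem~\ref{teorema_1} is $\varphi'_{t,\gamma}=\partial_t v^\eps+\gamma v^\eps$ rather than $\partial_t v^\eps$ alone, in which case the regularizing term contributes the nonnegative quantities $\gamma\eps\|\partial_t^2 v^\eps\|_{L^2}^2$ and $\gamma\eps\|\partial_x^2 v^\eps\|_{L^2}^2$ rather than vanishing — this is still harmless, but it is not zero; and the zero-mean remark about $g$ is correct and worth keeping since, as you observe, for zero-mean $\varphi_0$ and $\varphi'$ the operator $\mathbb L'[\varphi_0]$ automatically produces zero-mean output, so the compatibility is built in. In short: what the Galerkin approach buys is that it inherits the a priori estimate of Theorem~\ref{teorema_1} with no extra work at all, thanks to the commutation of $P_N$ with the Hilbert transform; what your duality approach buys is independence from the existence of such a tailored orthogonal basis, at the cost of establishing the adjoint estimate and a regularization/limit step.
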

%%%%%%%%%%%%%%%%%%%%%%%%%%%%%%%%%%%%%%%%%%%%%%%
\begin{proof}
The proof follows from a standard Galerkin approximation. Given any function $f:\mathbb T\rightarrow\mathbb C$ expanded in terms of Fourier series as
\begin{equation*}\label{}
f(x)=\frac{1}{2\pi}\sum\limits_{k\in\mathbb Z}\widehat{f}(k)e^{ikx}\,,
\end{equation*}
we define the finite dimensional orthogonal projection
\begin{equation*}\label{}
f^N(x)=P_Nf(x)=\frac{1}{2\pi}\sum\limits_{|k|\leq N}\widehat{f}(k)e^{ikx}\,.
\end{equation*}
Let us consider the Galerkin approximation $\vphi^N=P_N\varphi^\prime$, defined as the solution of the approximate ODE
\begin{multline}\label{equazione_lin1approx}
\varphi^N_{tt}-P_N\left(  \left(\mu-2\phi_{0,x}\right)\varphi^N_{xx} \right)\\
=P_N\left\{  2\left[\mathbb H\,;\phi_{0,x}\right]\phi^N_{xx}+2\mathbb H[\phi_{0,xx}\phi^N_x]
-\left(\left[\phi^N;\mathbb H\right]\phi_{0,xx}+\left[\phi_0;\mathbb H\right]\phi^N_{xx}\right)_x \right\}+g^N\,.
\end{multline}
We can repeat for $\vphi^N$ the same calculations in the proof of Theorem \ref{teorema_1}, leading to the analogue of the a priori estimate \eqref{stima_apriori_1}, uniform in $N$. By standard arguments we can extract a subsequence $\{\vphi^N\}$ and pass to the limit in \eqref{equazione_lin1approx} to obtain a solution of \eqref{equazione_lin}. The uniqueness of the solution follows from \eqref{stima_apriori_1} and the linearity of the problem.

\end{proof}

%%%%%%%%%%%%%%%%%%%%%%%%%%%%%%%%%%%%%%%%%%%%%%%%%%%%%%%%%%%%%%%%%%%%%%%%%%%%%%%%%%%%%%%%%%%%%%%%%%%%%%%
%%%%%%%%%%%%%%%%%%%%%%%%%%%%%%%%%%%%%%%%%%%%%%%%%%%%%%%%%%%%%%%%%%%%%%%%%%%%%%%%%%%%%%%%%%%%%%%%%%%%%%%

\section{Tame estimate for the linearized equation}\label{stima_tame}
This section is devoted to associate to all sufficiently smooth solutions of the equation \eqref{equazione_lin} an appropriate a priori estimate in higher order Sobolev norms. In agreement with the above section, the norms involved in the found estimate are {\it weighted} in time by the exponential function $e^{-\gamma t}$.

The goal of this section is to prove the following result.
\begin{proposition}\label{prop_stima_tame}
Let $m\ge1$ be an integer. Assume that the basic state $\varphi_0:\mathbb R\times\mathbb T\rightarrow\mathbb R$, with zero spatial mean, satisfies
\begin{equation*}%\label{regolarita_phi0_m}
\varphi_0\in L^\infty(\mathbb R; H^5(\mathbb T))\cap L^\infty(\mathbb R; H^{m+3}(\mathbb T))\,,\quad\varphi_{0,t}\in L^\infty(\mathbb R; H^2(\mathbb T))
\end{equation*}
and
condition \eqref{stability_unif}. Then there exist $\gamma_1\ge 1$, and $C>0$, depending boundedly and increasingly on $m$, $\delta$, $\Vert\varphi_{0}\Vert_{L^\infty(\mathbb R;H^5(\mathbb T))}$, $\Vert\varphi_{0,t}\Vert_{L^\infty(\mathbb R; H^2(\mathbb T))} $, such that for all $\gamma\ge \gamma_1$ and $g\in{L^2_\gamma(\mathbb R; H^m(\mathbb T))}$ there exists a unique solution $\varphi^\prime$ with zero spatial mean of \eqref{equazione_lin} (expanded form in \eqref{equazione_lin1}) such that $\varphi^\prime_{t}\in{{L^2_\gamma(\mathbb R; H^m(\mathbb T))}}, \varphi^\prime_{x}\in{{L^2_\gamma(\mathbb R; H^m(\mathbb T))}}$. Moreover,  the following a priori estimate holds true
\begin{equation}\label{stima_tame_1}
\begin{split}
\gamma & \left\{\Vert\varphi^\prime_{t}\Vert^2_{L^2_\gamma(\mathbb R; H^m(\mathbb T))}+\Vert\varphi^\prime_{x}\Vert^2_{L^2_\gamma(\mathbb R;H^m(\mathbb T))}\right\}\\
&\le \frac{C}{\gamma}\left\{\Vert\varphi_{0,x}\Vert^2_{L^\infty(\mathbb R;H^{m+2}(\mathbb T))}\Vert g\Vert^2_{L^2_\gamma(\mathbb R; H^2(\mathbb T))} +\Vert g\Vert^2_{L^2_\gamma(\mathbb R; H^m(\mathbb T))}\right\}\,.
\end{split}
\end{equation}
%where $g:=\mathbb L^\prime[\varphi_0]\varphi^\prime$.
\end{proposition}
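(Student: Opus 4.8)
The existence and uniqueness of a solution $\varphi'$ with $\varphi'_t,\varphi'_x\in L^2_\gamma(\mathbb R;H^m(\mathbb T))$ will, once the a priori estimate \eqref{stima_tame_1} is available, follow exactly as in Theorem \ref{teorema_2}: the Galerkin approximations $\varphi^N$ solving \eqref{equazione_lin1approx} are smooth and finite--dimensional in $x$, so \eqref{stima_tame_1} applies to them with constants independent of $N$; extracting a weakly convergent subsequence and passing to the limit produces a solution with the stated regularity, which coincides with the one already furnished by Theorem \ref{teorema_2} because of uniqueness. Hence the whole point is the a priori estimate, which we establish for sufficiently smooth $\varphi'$ of zero spatial mean, by induction on $m$, the case $m=0$ being precisely \eqref{stima_apriori_1}.

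For the inductive step we mimic, at the level of $m$ extra $x$--derivatives, the energy argument of Theorem \ref{teorema_1}: we apply $\partial_x^m$ to \eqref{equazione_lin1}, multiply by $\partial_x^m\varphi'_{t,\gamma}$, and integrate over $\mathbb R\times\mathbb T$. Integration by parts in $t$ and in $x$, together with \eqref{identita}, reproduces on the left--hand side the good quantities $\gamma\{\|\partial_x^m\varphi'_{t}\|^2_{L^2_\gamma(\mathbb R\times\mathbb T)}+\int_{\mathbb R\times\mathbb T}(\mu-2\phi_{0,x})|\partial_x^m\varphi'_{x,\gamma}|^2\}$, the integral being bounded below, via \eqref{stability_unif}, by $\tfrac\delta2\|\partial_x^m\varphi'_{x}\|^2_{L^2_\gamma(\mathbb R\times\mathbb T)}$; since for zero--mean functions $\|\partial_x^m\varphi'_t\|_{L^2_\gamma(\mathbb R\times\mathbb T)}\simeq\|\varphi'_t\|_{L^2_\gamma(\mathbb R;H^m(\mathbb T))}$ and likewise for $\varphi'_x$, this recovers the left--hand side of \eqref{stima_tame_1} up to the lower--order norms already controlled by the inductive hypothesis. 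On the right--hand side one meets: the source term $\int_{\mathbb R\times\mathbb T}\partial_x^m g_\gamma\,\partial_x^m\varphi'_{t,\gamma}$, bounded by $\tfrac\gamma2\|\varphi'_t\|^2_{L^2_\gamma(\mathbb R;H^m)}+\tfrac1{2\gamma}\|g\|^2_{L^2_\gamma(\mathbb R;H^m)}$, the first part being absorbed and the second producing the $\|g\|^2_{H^m}$ term of \eqref{stima_tame_1}; the commutator $[\partial_x^m,(\mu-2\phi_{0,x})\partial_x^2]\varphi'$ from the principal part; and the terms produced by $\partial_x^m$ acting on the right--hand side of \eqref{equazione_lin1}, which — after the delicate integration--by--parts and commutator surgery that in Theorem \ref{teorema_1} turned the $\mathcal I_2$--contribution of $\big([\phi_0;\mathbb H]\phi'_{xx}\big)_x$ into first--order quantities, now redone with $\partial_x^m$ inserted — reduce to integrals of products of derivatives of $\phi_0$, $\varphi'$, $\phi'$ to which the operators of Lemma \ref{lemma_comm} (estimates \eqref{stima_comm_2}, \eqref{stima_comm_3}) and the weighted inequality $\gamma\|f\|_{L^2_\gamma}\le\|f_t\|_{L^2_\gamma}$ apply.

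Every such integral is then split, by the tame commutator estimates of Lemma \ref{lemma_comm}, the Moser product inequality $\|fg\|_{H^s}\lesssim\|f\|_{L^\infty}\|g\|_{H^s}+\|f\|_{H^s}\|g\|_{L^\infty}$, Gagliardo--Nirenberg interpolation, and the $L^2$--boundedness of $\mathbb H$, into three kinds of contributions. \emph{Type (i):} a \emph{bounded} norm of a coefficient (an $L^\infty$--type norm of $\phi_0,\phi_{0,x},\phi_{0,xx}$ or $\phi_{0,xt}$, all controlled by $\|\varphi_0\|_{L^\infty(\mathbb R;H^5)}+\|\varphi_{0,t}\|_{L^\infty(\mathbb R;H^2)}$) times the \emph{top}--order quantities $\|\varphi'_t\|_{L^2_\gamma(\mathbb R;H^m)}+\|\varphi'_x\|_{L^2_\gamma(\mathbb R;H^m)}$; these are absorbed into the left--hand side by taking $\gamma\ge\gamma_1$ large, $\gamma_1$ depending boundedly and increasingly on $m,\delta,\|\varphi_0\|_{H^5},\|\varphi_{0,t}\|_{H^2}$. \emph{Type (ii):} the \emph{top} coefficient norm $\|\varphi_{0,x}\|_{L^\infty(\mathbb R;H^{m+2})}$ — the regularity produced when all $m$ extra $x$--derivatives pile onto a coefficient already carrying up to three $\varphi_0$--derivatives, as in the $\int\phi_\gamma\,\varphi_{0,xxx}\,\varphi'_{t,\gamma}$--type terms stemming from $\mathcal I_1$ — times the \emph{lowest} norms $\|\varphi'_t\|_{L^2_\gamma(\mathbb R\times\mathbb T)}+\|\varphi'_x\|_{L^2_\gamma(\mathbb R\times\mathbb T)}$, which by \eqref{stima_apriori_1} are $\le\tfrac{C_0^{1/2}}{\gamma}\|g\|_{L^2_\gamma(\mathbb R\times\mathbb T)}$; squaring and using the outer factor $\gamma^{-1}$ this yields exactly $\tfrac C\gamma\|\varphi_{0,x}\|^2_{H^{m+2}}\|g\|^2_{H^2}$, the first term of \eqref{stima_tame_1} (with room to spare). \emph{Type (iii):} intermediate products, where a coefficient norm $\|\varphi_{0,x}\|_{H^j}$ with $0<j<m+2$ multiplies a norm of $\varphi'$ at an intermediate level $<m$, estimated by the inductive hypothesis; log--convexity of the Sobolev norms of $\varphi_0$ and of $g$, combined with Young's inequality, converts these back into a combination of the Type (i) and Type (ii) contributions. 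Since all $\phi_{0,t}$--dependence enters only through Type (i) — the $t$--derivatives of $\varphi_0$ never receiving more than one extra $x$--derivative in the surgery — only $\|\varphi_{0,t}\|_{H^2}$ is ever needed. Collecting the three types and absorbing proves \eqref{stima_tame_1}.

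The main obstacle is the surgery invoked in the second paragraph: carrying out at level $m$ the chain of integrations by parts and commutator identities that in Theorem \ref{teorema_1} reduced $\mathcal I_2$ to first--order quantities, in such a way that (a) the $\varphi'$-- and $\phi'$--factors never end up with more than $m+1$ derivatives, so that the left--hand side of \eqref{stima_tame_1} can absorb them, and (b) no single term ever contains both a high norm of $\varphi_0$ and a high norm of $g$, the coefficient norms being confined to $\|\varphi_{0,x}\|_{H^{m+2}}$ and the data norms multiplying them to $\|g\|_{L^2}$ (hence $\le\|g\|_{H^2}$). This is exactly where the gain of two derivatives in the double--commutator estimate \eqref{stima_comm_3} for $\big[\mathbb H;[\phi_0;\mathbb H]\big]\partial_x$, the tame structure of \eqref{stima_comm_2}, and the $\gamma$--trading inequality $\gamma\|f\|_{L^2_\gamma}\le\|f_t\|_{L^2_\gamma}$ are indispensable.
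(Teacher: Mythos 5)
Your outline correctly identifies what must be controlled, but it misses the single observation that makes the paper's proof go through without pain, and it leaves the hardest step of your own route explicitly unaddressed. The paper does \emph{not} redo the energy integral of Theorem \ref{teorema_1} with $\partial_x^m$ inserted. Instead it applies $\partial_x^m$ directly to the linearized equation and notes that, after moving the commutators to the right, $\partial_x^m\varphi'$ satisfies \emph{the same} equation $\mathbb L'[\varphi_0](\partial_x^m\varphi')=\sum_{j=1}^5 G_j+\partial_x^m g$, where the $G_j$'s are commutator terms in $\varphi_0$, $\varphi'$. One can then invoke the $L^2$ estimate \eqref{stima_apriori_1} of Theorem \ref{teorema_1} \emph{as a black box} applied to $\partial_x^m\varphi'$ with forcing $\sum G_j+\partial_x^m g$, so that all the integration-by-parts and double-commutator surgery of $\mathcal I_2$ is already encapsulated in that estimate and never has to be revisited. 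The remaining work is purely to estimate $\|G_j\|_{L^2_\gamma}$ by tame commutator bounds (Lemmas \ref{lemma_comm}, \ref{lemma_comm_1}, \ref{lemma_comm_2}).

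Your plan, by contrast, proposes to multiply $\partial_x^m$ of \eqref{equazione_lin1} by $\partial_x^m\varphi'_{t,\gamma}$ and to re-execute the whole energy argument at level $m$, including the $\mathcal I_2$ manipulations, which you acknowledge as ``the main obstacle'' while not carrying it out. That is a genuine gap: the point (a)--(b) constraints you impose on the surgery are exactly what you would need to verify, and verifying them from scratch at order $m$ is nontrivial. Moreover your choice of induction on $m$ is unnecessary: once the $G_j$-estimates are in hand, the only lower-order $\varphi'$-norm surviving after absorption is $\|\varphi'_x\|_{L^2_\gamma(\mathbb R;H^2)}$ (it arises from the $L^\infty$-control $\|\phi_{xx,\gamma}\|_{L^\infty}\lesssim\|\varphi_{x,\gamma}\|_{H^2}$ in the commutator lemmas, not at the bare $L^2$ level as you suggest), so a single reuse of the inequality with $m=2$ suffices to close. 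This is also why the paper's right-hand side carries $\|g\|^2_{L^2_\gamma(\mathbb R;H^2)}$ rather than the $\|g\|^2_{L^2_\gamma(\mathbb R\times\mathbb T)}$ you claim: the ``room to spare'' you advertise does not materialize.

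Finally, a small but real issue: the existence/uniqueness statement in the proposition does not need a new Galerkin argument as you suggest. Existence in $L^2_\gamma$ is already furnished by Theorem \ref{teorema_2}; what Proposition \ref{prop_stima_tame} adds is the tame a priori bound, and the improved regularity of the unique solution follows from it by the usual density/regularization argument.
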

%%%%
%%%%
%%%%
\begin{remark}\label{}
The a priori estimate \eqref{stima_tame_1} shows the loss of 2 spatial derivatives in the inversion of the operator $\mathbb{L}^\prime[\varphi_0]$,
from the given basic state $\varphi_0$ to $\varphi^\prime$. For this reason the equation \eqref{onde_integro_diff} cannot be solved by standard methods like the implicit function theorem or the contraction principle. Instead, \eqref{onde_integro_diff} will be solved by applying the Nash-Moser's theorem.
\end{remark}
\begin{proof}
In view of \eqref{ident_norme}, to obtain an estimate of the $L^2_\gamma(\mathbb R;H^m(\mathbb T))-$norm of  the derivatives $\varphi^\prime_t$ and $\varphi^\prime_x$ of a smooth solution $\varphi^\prime$ to the equation \eqref{equazione_lin}, with an arbitrary (sufficiently large) order $m\ge 1$, let us first concentrate on the derivative $\partial_x^m\varphi^\prime$. To simplify the notation, in the following we drop the superscript ${}^\prime$ in the unknown function $\varphi^\prime$.

Applying the derivative $\partial_x^m$ to \eqref{equazione_lin} (see also \eqref{equazione_lin1}), after some calculations we find that $\partial_x^m\varphi$ must solve the linear equation
\begin{equation*}%\label{equazione_lin_dm}
\begin{split}
\mathbb{L}^\prime[\varphi_0]&\left(\partial_x^m\varphi\right)=2\mathbb H\left[\left[\partial_x^m\,;\,\phi_{0,x}\right]\phi_{xx}\right]+2\mathbb H\left[\left[\partial_x^m\,;\,\phi_{0,xx}\right]\phi_x\right]\\
&-\left(\left[\partial_x^m\,;\,\mathbb H\left[\phi_{0,xx}\right]\right]\phi-\mathbb H\left[\left[\partial_x^m\,;\,\phi_{0,xx}\right]\phi\right]+\left[\left[\partial_x^m\,;\,\phi_0\right]\,;\,\mathbb H\right]\phi_{xx}\right)_x+\partial_x^mg\\
&=\sum\limits_{j=1}^5G_j+\partial_x^mg\,,
\end{split}
\end{equation*}
which looks like the original linear equation \eqref{equazione_lin} where the forcing term $g$ is replaced by $\sum\limits_{j=1}^5G_j+\partial_x^mg$.
Hence applying the $L^2-$estimate \eqref{stima_apriori_1} we get for $\gamma\ge\gamma_0$
\begin{equation}\label{stima_apriori_2_0}
\gamma\left\{\Vert\partial_x^m\varphi_{t}\Vert^2_{L^2_\gamma(\mathbb R\times\mathbb T)}+\Vert\partial_x^m\varphi_{x}\Vert^2_{L^2_\gamma(\mathbb R\times\mathbb T)}\right\}\le\frac{C_0}{\gamma}\left\{\sum\limits_{j=1}^5\Vert G_j\Vert^2_{L^2_\gamma(\mathbb R\times\mathbb T)}+\Vert \partial_x^m g\Vert^2_{L^2_\gamma(\mathbb R\times\mathbb T)}\right\}\,,
\end{equation}
where the threshold $\gamma_0$ and the constant $C_0$ are provided by Theorem \ref{teorema_1}.

We have now to provide suitable estimates for the $L^2_\gamma-$norms of the $G_j$'s. As in the preceding section, throughout the following $C$ will always denote a positive constant, only depending on $m$, that may be different from line to line.

\smallskip
\noindent
{\it The estimate of} $G_1:=2\mathbb H\left[\left[\partial_x^m\,;\,\phi_{0,x}\right]\phi_{xx}\right]$: for fixed $t\in\mathbb R$ we first compute, by using the commutator estimate \eqref{stima_comm_5}, the Sobolev continuity of $\mathbb H$ and the Sobolev imbedding $H^1(\mathbb T)\hookrightarrow L^\infty(\mathbb T)$
\begin{equation*}
\begin{split}
\Vert & G_{1,\,\gamma}(t)\Vert_{L^2(\mathbb T)}=2\Vert \mathbb H\left[\left[\partial_x^m\,;\,\phi_{0,x}\right]\phi_{xx,\,\gamma}\right]\Vert_{L^2(\mathbb T)}= 2\Vert \left[\partial_x^m\,;\,\phi_{0,x}\right]\phi_{xx,\,\gamma}\Vert_{L^2(\mathbb T)}\\
&\le C\left\{\Vert\phi_{0,xx}\Vert_{L^\infty(\mathbb T)}\Vert\phi_{xx,\gamma}\Vert_{H^{m-1}(\mathbb T)}+\Vert\phi_{xx,\gamma}\Vert_{L^\infty(\mathbb T)}\Vert\phi_{0,x}\Vert_{H^{m}(\mathbb T)}\right\}\\
&\le C\left\{\Vert\phi_{0,xx}\Vert_{L^\infty(\mathbb T)}\Vert\varphi_{x,\gamma}\Vert_{H^{m}(\mathbb T)}+\Vert\phi_{xx,\gamma}\Vert_{L^\infty(\mathbb T)}\Vert\varphi_{0,x}\Vert_{H^{m}(\mathbb T)}\right\}\\
&\le C\left\{\Vert\varphi_{0}\Vert_{H^3(\mathbb T)}\Vert\varphi_{x,\gamma}\Vert_{H^{m}(\mathbb T)}+\Vert\varphi_{x,\gamma}\Vert_{H^2(\mathbb T)}\Vert\varphi_{0,x}\Vert_{H^{m}(\mathbb T)}\right\}\,.
\end{split}
\end{equation*}
Integrating over $\mathbb R_t$ we get
\begin{equation}\label{stima_G1_fin}
\Vert G_{1}\Vert_{L^2_\gamma(\mathbb R\times\mathbb T)}\le C\left\{\Vert\varphi_{0}\Vert_{L^\infty(\mathbb R;H^3(\mathbb T))}\Vert\varphi_{x}\Vert_{L^2_\gamma(\mathbb R;H^{m}(\mathbb T))}+\Vert\varphi_{x}\Vert_{L^2_\gamma(\mathbb R;H^2(\mathbb T))}\Vert\varphi_{0,x}\Vert_{L^\infty(\mathbb R;H^{m}(\mathbb T))}\right\}\,.
\end{equation}

\smallskip
\noindent
{\it The estimate of} $G_2:=2\mathbb H\left[\left[\partial_x^m\,;\,\phi_{0,xx}\right]\phi_{x}\right]$: for fixed $t\in\mathbb R$ we first compute by using the commutator estimate \eqref{stima_comm_4} and, for the rest, arguing as before
\begin{equation*}
\begin{split}
\Vert & G_{2,\,\gamma}(t)\Vert_{L^2(\mathbb T)}=2\Vert \mathbb H\left[\left[\partial_x^m\,;\,\phi_{0,xx}\right]\phi_{x,\,\gamma}\right]\Vert_{L^2(\mathbb T)}= 2\Vert \left[\partial_x^m\,;\,\phi_{0,xx}\right]\phi_{x,\,\gamma}\Vert_{L^2(\mathbb T)}\\
&\le C\left\{\Vert\phi_{0,xx}\Vert_{L^\infty(\mathbb T)}\Vert\phi_{x,\gamma}\Vert_{H^{m}(\mathbb T)}+\Vert\phi_{x,\gamma}\Vert_{L^\infty(\mathbb T)}\Vert\phi_{0,xx}\Vert_{H^{m}(\mathbb T)}\right\}\\
&\le C\left\{\Vert\varphi_{0}\Vert_{H^3(\mathbb T)}\Vert\varphi_{x,\gamma}\Vert_{H^{m}(\mathbb T)}+\Vert\varphi_{x,\gamma}\Vert_{H^1(\mathbb T)}\Vert\varphi_{0,x}\Vert_{H^{m+1}(\mathbb T)}\right\}\,.
\end{split}
\end{equation*}
Integration over $\mathbb R_t$ then gives
\begin{equation}\label{stima_G2_fin}
\Vert G_{2}\Vert_{L^2_\gamma(\mathbb R\times\mathbb T)}\le C\left\{\Vert\varphi_{0}\Vert_{L^\infty(\mathbb R;H^3(\mathbb T))}\Vert\varphi_{x}\Vert_{L^2_\gamma(\mathbb R;H^{m}(\mathbb T))}+\Vert\varphi_{x}\Vert_{L^2_\gamma(\mathbb R;H^1(\mathbb T))}\Vert\varphi_{0,x}\Vert_{L^\infty(\mathbb R;H^{m+1}(\mathbb T))}\right\}\,.
\end{equation}

\smallskip
\noindent
{\it The estimate of} $G_3:=-\left(\left[\partial_x^m\,;\,\mathbb H\left[\phi_{0,xx}\right]\right]\phi\right)_x$: for fixed $t\in\mathbb R$ we first compute by using again the commutator estimate \eqref{stima_comm_4} and the Sobolev continuity of $\mathbb H$:
\begin{equation}\label{stima_G3_0}
\begin{split}
\Vert & G_{3,\,\gamma}(t)\Vert_{L^2(\mathbb T)}=\Vert -\left(\left[\partial_x^m\,;\,\mathbb H\left[\phi_{0,xx}\right]\right]\phi_\gamma\right)_x\Vert_{L^2(\mathbb T)}\\
&\le\Vert\left[\partial_x^m\,;\,\mathbb H\left[\phi_{0,xxx}\right]\right]\phi_\gamma\Vert_{L^2(\mathbb T)}+\Vert \left[\partial_x^m\,;\,\mathbb H\left[\phi_{0,xx}\right]\right]\phi_{x, \gamma}\Vert_{L^2(\mathbb T)}\\
&=\Vert\left[\partial_x^m\,;\,\varphi_{0,xxx}\right]\phi_\gamma\Vert_{L^2(\mathbb T)}+\Vert \left[\partial_x^m\,;\,\varphi_{0,xx}\right]\phi_{x, \gamma}\Vert_{L^2(\mathbb T)}\\
&\le C\left\{\Vert\varphi_{0,xxx}\Vert_{L^\infty(\mathbb T)}\Vert\phi_\gamma\Vert_{H^m(\mathbb T)}+\Vert\varphi_{0,xxx}\Vert_{H^m(\mathbb T)}\Vert\phi_\gamma\Vert_{L^\infty(\mathbb T)}\right.\\
&\quad\left.+\Vert\varphi_{0,xx}\Vert_{L^\infty(\mathbb T)}\Vert\phi_{x,\gamma}\Vert_{H^m(\mathbb T)}+\Vert\varphi_{0,xx}\Vert_{H^m(\mathbb T)}\Vert\phi_{x,\gamma}\Vert_{L^\infty(\mathbb T)}\right\}\\
&\le C\left\{\Vert\varphi_{0,xxx}\Vert_{L^\infty(\mathbb T)}\Vert\varphi_\gamma\Vert_{H^m(\mathbb T)}+\Vert\varphi_{0,x}\Vert_{H^{m+2}(\mathbb T)}\Vert\phi_\gamma\Vert_{L^\infty(\mathbb T)}\right.\\
&\quad\left.+\Vert\varphi_{0,xx}\Vert_{L^\infty(\mathbb T)}\Vert\varphi_{x,\gamma}\Vert_{H^m(\mathbb T)}+\Vert\varphi_{0,x}\Vert_{H^{m+1}(\mathbb T)}\Vert\phi_{x,\gamma}\Vert_{L^\infty(\mathbb T)}\right\}\,.
\end{split}
\end{equation}
On the other hand from Poincar\'e's inequality (recall that $\varphi$ has zero spatial mean) and the Sobolev imbedding $H^1(\mathbb T)\hookrightarrow L^\infty(\mathbb T)$ we get
\begin{eqnarray}
\Vert\varphi_\gamma\Vert^2_{H^m(\mathbb T)}=\sum\limits_{k=0}^m\Vert\partial_x^k\varphi_\gamma\Vert^2_{L^2(\mathbb T)}\le C\sum\limits_{k=0}^m\Vert\partial_x^{k+1}\varphi_\gamma\Vert^2_{L^2(\mathbb T)}=C\Vert\varphi_{x,\gamma}\Vert^2_{H^m(\mathbb T)}\,;\label{magg_1}\\
\Vert\phi_\gamma\Vert_{L^\infty(\mathbb T)}\le C\Vert\varphi_\gamma\Vert_{H^1(\mathbb T)}\le C\Vert\varphi_{x,\gamma}\Vert_{L^2(\mathbb T)}\le C\Vert\varphi_{x,\gamma}\Vert_{H^1(\mathbb T)}\,;\label{magg_2}\\
\Vert\varphi_{0,xxx}\Vert_{L^\infty(\mathbb T)}\le C\Vert\varphi_{0,xxx}\Vert_{H^1(\mathbb T)}\le C\Vert\varphi_{0}\Vert_{H^4(\mathbb T)}\,;\label{magg_3}\\
\Vert\varphi_{0,xx}\Vert_{L^\infty(\mathbb T)}\le C\Vert\varphi_{0,xx}\Vert_{H^1(\mathbb T)}\le C\Vert\varphi_{0}\Vert_{H^3(\mathbb T)}\,.\label{magg_4}
\end{eqnarray}
Hence gathering estimates \eqref{stima_G3_0}, \eqref{magg_1}--\eqref{magg_4} we get
\begin{equation*}%\label{stima_G3_1}
\begin{split}
\Vert & G_{3,\,\gamma}(t)\Vert_{L^2(\mathbb T)}\le C\left\{\Vert\varphi_{0}\Vert_{H^4(\mathbb T)}\Vert\varphi_{x,\gamma}\Vert_{H^m(\mathbb T)}+\Vert\varphi_{0,x}\Vert_{H^{m+2}(\mathbb T)}\Vert\varphi_{x,\gamma}\Vert_{H^1(\mathbb T)}\right\}\,.
\end{split}
\end{equation*}
Finally, integration over $\mathbb R_t$ yields
\begin{equation}\label{stima_G3_fin}
\Vert G_{3}\Vert_{L^2_\gamma(\mathbb R\times\mathbb T)}\le C\left\{\Vert\varphi_{0}\Vert_{L^\infty(\mathbb R;H^4(\mathbb T))}\Vert\varphi_{x}\Vert_{L^2_\gamma(\mathbb R;H^{m}(\mathbb T))}+\Vert\varphi_{x}\Vert_{L^2_\gamma(\mathbb R;H^1(\mathbb T))}\Vert\varphi_{0,x}\Vert_{L^\infty(\mathbb R;H^{m+2}(\mathbb T))}\right\}\,.
\end{equation}

\smallskip
\noindent
{\it The estimate of} $G_4:=\left(\mathbb H\left[\left[\partial_x^m\,;\,\phi_{0,xx}\right]\phi\right]\right)_x$: for fixed $t\in\mathbb R$ we compute once again
\begin{equation*}%\label{stima_G4_0}
\begin{split}
\Vert & G_{4,\,\gamma}(t)\Vert_{L^2(\mathbb T)}=\Vert \left(\mathbb H\left[\left[\partial_x^m\,;\,\phi_{0,xx}\right]\phi_\gamma\right]\right)_x\Vert_{L^2(\mathbb T)}=\Vert\left(\left[\partial_x^m\,;\,\phi_{0,xx}\right]\phi_\gamma\right)_x  \Vert_{L^2(\mathbb T)}\\
&\le\Vert\left[\partial_x^m\,;\,\phi_{0,xxx}\right]\phi_\gamma\Vert_{L^2(\mathbb T)}+\Vert \left[\partial_x^m\,;\,\phi_{0,xx}\right]\phi_{x, \gamma}\Vert_{L^2(\mathbb T)}\,.
\end{split}
\end{equation*}
Then we argue as for $G_3$ (where the derivatives of $\varphi_0$ are replaced by the same derivatives of $\phi_0$) to get
\begin{equation}\label{stima_G4_fin}
\Vert G_{4}\Vert_{L^2_\gamma(\mathbb R\times\mathbb T)}\le C\left\{\Vert\varphi_{0}\Vert_{L^\infty(\mathbb R;H^4(\mathbb T))}\Vert\varphi_{x}\Vert_{L^2_\gamma(\mathbb R;H^{m}(\mathbb T))}+\Vert\varphi_{x}\Vert_{L^2_\gamma(\mathbb R;H^1(\mathbb T))}\Vert\varphi_{0,x}\Vert_{L^\infty(\mathbb R;H^{m+2}(\mathbb T))}\right\}\,.
\end{equation}

\smallskip
\noindent
{\it The estimate of} $G_5:=\left(-\left[\left[\partial_x^m\,;\,\phi_0\right]\,;\,\mathbb H\right]\phi_{xx}\right)_x$: for fixed $t\in\mathbb R$ we compute
\begin{equation}\label{stima_G5_0}
\Vert G_{5,\gamma}(t)\Vert_{L^2(\mathbb  T)}\le \left\Vert\left[\left[\partial_x^m\,;\,\phi_{0,x}\right]\,;\,\mathbb H\right]\phi_{xx,\gamma}\right\Vert_{L^2(\mathbb T)}+\left\Vert\left[\left[\partial_x^m\,;\,\phi_0\right]\,;\,\mathbb H\right]\phi_{xxx,\gamma}\right\Vert_{L^2(\mathbb T)}\,.
\end{equation}
Now we treat separately the two $L^2-$norms in the right-hand side above. As for the first norm, we expand the commutator with $\mathbb H$ and use the commutator estimate \eqref{stima_comm_5} to get
\begin{equation}\label{stima_G5_norma1}
\begin{split}
\left\Vert\left[\left[\partial_x^m\,;\,\phi_{0,x}\right]\,;\,\mathbb H\right]\phi_{xx,\gamma}\right\Vert_{L^2(\mathbb T)}  & \le \Vert \left[\partial_x^m\,;\,\phi_{0,x}\right]\mathbb H\left[\phi_{xx,\gamma}\right]\Vert_{L^2(\mathbb T)}+\Vert\mathbb H\left[\left[\partial_x^m\,;\,\phi_{0,x}\right]\phi_{xx,\gamma}\right]\Vert_{L^2(\mathbb T)}\\
& =\Vert \left[\partial_x^m\,;\,\phi_{0,x}\right]\varphi_{xx,\gamma}\Vert_{L^2(\mathbb T)}+\Vert\left[\partial_x^m\,;\,\phi_{0,x}\right]\phi_{xx,\gamma}\Vert_{L^2(\mathbb T)}\\
& \le C\left\{\Vert\phi_{0,xx}\Vert_{L^\infty(\mathbb T)}\Vert\varphi_{xx,\gamma}\Vert_{H^{m-1}(\mathbb T)}+\Vert\phi_{0,x}\Vert_{H^{m}(\mathbb T)}\Vert\varphi_{xx,\gamma}\Vert_{L^\infty(\mathbb  T)}\right.\\
&\qquad\left. +\Vert\phi_{0,xx}\Vert_{L^\infty(\mathbb T)}\Vert\phi_{xx,\gamma}\Vert_{H^{m-1}(\mathbb T)}+\Vert\phi_{0,x}\Vert_{H^{m}(\mathbb T)}\Vert\phi_{xx,\gamma}\Vert_{L^\infty(\mathbb  T)} \right\}\\
&\le C\left\{\Vert\phi_{0,xx}\Vert_{L^\infty(\mathbb T)}\Vert\varphi_{x,\gamma}\Vert_{H^{m}(\mathbb T)}+\Vert\phi_{0,x}\Vert_{H^{m}(\mathbb T)}\Vert\varphi_{xx,\gamma}\Vert_{L^\infty(\mathbb  T)}\right.\\
&\qquad\left. +\Vert\phi_{0,x}\Vert_{H^{m}(\mathbb T)}\Vert\phi_{xx,\gamma}\Vert_{L^\infty(\mathbb  T)}\right\}\\
&\le C\left\{\Vert\varphi_{0}\Vert_{H^3(\mathbb T)}\Vert\varphi_{x,\gamma}\Vert_{H^{m}(\mathbb T)}+\Vert\varphi_{0,x}\Vert_{H^{m}(\mathbb T)}\Vert\varphi_{x,\gamma}\Vert_{H^2(\mathbb  T)}\right\}\,.
\end{split}
\end{equation}
Concerning the second $L^2-$norm in the right-hand side of \eqref{stima_G5_0}, we use Lemma \ref{lemma_comm_2} to get
\begin{equation}\label{stima_G5_norma2}
\left\Vert\left[\left[\partial_x^m\,;\,\phi_0\right]\,;\,\mathbb H\right]\phi_{xxx,\gamma}\right\Vert_{L^2(\mathbb T)}\le C\Vert\phi_{0,x}\Vert_{H^m(\mathbb T)}\Vert\phi_{xx,\gamma}\Vert_{H^1(\mathbb T)}\le C\Vert\varphi_{0,x}\Vert_{H^m(\mathbb T)}\Vert\varphi_{x,\gamma}\Vert_{H^2(\mathbb T)}.
\end{equation}
Summing up \eqref{stima_G5_norma1}, \eqref{stima_G5_norma2} we then obtain
\begin{equation*}%\label{stima_G5_1}
\Vert G_{5,\gamma}(t)\Vert_{L^2(\mathbb  T)}\le C\left\{\Vert\varphi_{0}\Vert_{H^3(\mathbb T)}\Vert\varphi_{x,\gamma}\Vert_{H^{m}(\mathbb T)}+\Vert\varphi_{0,x}\Vert_{H^{m}(\mathbb T)}\Vert\varphi_{x,\gamma}\Vert_{H^2(\mathbb  T)}\right\}
\end{equation*}
and integrating in $\mathbb R_t$
\begin{equation}\label{stima_G5_fin}
\Vert G_{5}\Vert_{L^2_\gamma(\mathbb R\times\mathbb  T)}\le C\left\{\Vert\varphi_{0}\Vert_{L^\infty(\mathbb R;H^3(\mathbb T))}\Vert\varphi_{x}\Vert_{L^2_\gamma(\mathbb R;H^{m}(\mathbb T))}+\Vert\varphi_{0,x}\Vert_{L^\infty(\mathbb R;H^{m}(\mathbb T))}\Vert\varphi_{x}\Vert_{L^2_\gamma(\mathbb R; H^2(\mathbb  T))}\right\}\,.
\end{equation}
Using \eqref{stima_G1_fin}, \eqref{stima_G2_fin}, \eqref{stima_G3_fin}, \eqref{stima_G4_fin}, \eqref{stima_G5_fin} to estimate the right-hand side of \eqref{stima_apriori_2_0} we have
\begin{equation}\label{stima_apriori_2_1}
\begin{split}
\gamma &\left\{\Vert\partial_x^m\varphi_{t}\Vert^2_{L^2_\gamma(\mathbb R\times\mathbb T)}+\Vert\partial_x^m\varphi_{x}\Vert^2_{L^2_\gamma(\mathbb R\times\mathbb T)}\right\}\le \frac{C}{\gamma}\left\{\Vert\varphi_{0}\Vert^2_{L^\infty(\mathbb R;H^4(\mathbb T))}\Vert\varphi_{x}\Vert^2_{L^2_\gamma(\mathbb R;H^{m}(\mathbb T))}\right.\\
&\qquad \left.+\Vert\varphi_{0,x}\Vert^2_{L^\infty(\mathbb R;H^{m+2}(\mathbb T))}\Vert\varphi_{x}\Vert^2_{L^2_\gamma(\mathbb R;H^{2}(\mathbb T))}+\Vert \partial_x^m g\Vert^2_{L^2_\gamma(\mathbb R\times\mathbb T)}\right\}\,,\qquad\forall\,\gamma\ge\gamma_0\,.
\end{split}
\end{equation}
Because of \eqref{ident_norme} and $\varphi$ has spatial zero mean, the left-hand side of \eqref{stima_apriori_2_1} is equivalent, uniformly in $\gamma$, to $\gamma\left\{\Vert\varphi_{t}\Vert^2_{L^2_\gamma(\mathbb R; H^m(\mathbb T))}+\Vert\varphi_{x}\Vert^2_{L^2_\gamma(\mathbb R;H^m(\mathbb T))}\right\}$, hence the estimate can be restated as
\begin{equation*}
\begin{split}
\gamma & \left\{\Vert\varphi_{t}\Vert^2_{L^2_\gamma(\mathbb R; H^m(\mathbb T))}+\Vert\varphi_{x}\Vert^2_{L^2_\gamma(\mathbb R;H^m(\mathbb T))}\right\}\le \frac{C}{\gamma}\left\{\Vert\varphi_{0}\Vert^2_{L^\infty(\mathbb R;H^4(\mathbb T))}\Vert\varphi_{x}\Vert^2_{L^2_\gamma(\mathbb R;H^{m}(\mathbb T))}\right.\\
&\qquad\left.+\Vert\varphi_{0,x}\Vert^2_{L^\infty(\mathbb R;H^{m+2}(\mathbb T))}\Vert\varphi_{x}\Vert^2_{L^2_\gamma(\mathbb R;H^{2}(\mathbb T))}+\Vert g\Vert^2_{L^2_\gamma(\mathbb R; H^m_\gamma(\mathbb T))}\right\}\,,\qquad\forall\,\gamma\ge\gamma_0\,.
\end{split}
\end{equation*}
Then the first term in the right-hand side can be absorbed into the left-hand side, provided that $\gamma$ is larger than some positive $\widetilde{\gamma}_0=\widetilde{\gamma}_0(\Vert\varphi_0\Vert_{L^\infty(\mathbb R; H^4(\mathbb T))})$, to get
\begin{equation}\label{stima_apriori_2_2}
\begin{split}
\gamma & \left\{\Vert\varphi_{t}\Vert^2_{L^2_\gamma(\mathbb R; H^m(\mathbb T))}+\Vert\varphi_{x}\Vert^2_{L^2_\gamma(\mathbb R;H^m(\mathbb T))}\right\}\\
&\le \frac{C}{\gamma}\left\{\Vert\varphi_{0,x}\Vert^2_{L^\infty(\mathbb R;H^{m+2}(\mathbb T))}\Vert\varphi_{x}\Vert^2_{L^2_\gamma(\mathbb R;H^{2}(\mathbb T))}+\Vert g\Vert^2_{L^2_\gamma(\mathbb R; H^m(\mathbb T))}\right\}\,,\qquad\forall\,\gamma\ge\max\{\gamma_0,\widetilde{\gamma}_0\}\,.
\end{split}
\end{equation}
Now we write \eqref{stima_apriori_2_2} for $m=2$
\begin{equation*}%\label{stima_apriori_2_3}
\begin{split}
\gamma & \left\{\Vert\varphi_{t}\Vert^2_{L^2_\gamma(\mathbb R; H^2(\mathbb T))}+\Vert\varphi_{x}\Vert^2_{L^2_\gamma(\mathbb R;H^2(\mathbb T))}\right\}\\
&\le \frac{C}{\gamma}\left\{\Vert\varphi_{0,x}\Vert^2_{L^\infty(\mathbb R;H^{4}(\mathbb T))}\Vert\varphi_{x}\Vert^2_{L^2_\gamma(\mathbb R;H^{2}(\mathbb T))}+\Vert g\Vert^2_{L^2_\gamma(\mathbb R; H^2(\mathbb T))}\right\}\\
&\le \frac{C}{\gamma}\left\{\Vert\varphi_{0}\Vert^2_{L^\infty(\mathbb R;H^{5}(\mathbb T))}\Vert\varphi_{x}\Vert^2_{L^2_\gamma(\mathbb R;H^{2}(\mathbb T))}+\Vert g\Vert^2_{L^2_\gamma(\mathbb R; H^2(\mathbb T))}\right\}
\end{split}
\end{equation*}
and again we absorb into the left-hand side the first term in the right-hand side above for $\gamma\ge\widetilde{\gamma_1}=\widetilde{\gamma}_1(\Vert\varphi_{0}\Vert_{L^\infty(\mathbb R;H^{5}(\mathbb T))})$ to find that
\begin{equation}\label{stima_apriori_2_4}
\begin{split}
\gamma & \left\{\Vert\varphi_{t}\Vert^2_{L^2_\gamma(\mathbb R; H^2(\mathbb T))}+\Vert\varphi_{x}\Vert^2_{L^2_\gamma(\mathbb R;H^2(\mathbb T))}\right\}\le \frac{C}{\gamma}\Vert g\Vert^2_{L^2_\gamma(\mathbb R; H^2(\mathbb T))}\,.
\end{split}
\end{equation}
Then we use \eqref{stima_apriori_2_4} to estimate the norm $\Vert\varphi_{x}\Vert^2_{L^2_\gamma(\mathbb R;H^2(\mathbb T))}$ in the right-hand side of \eqref{stima_apriori_2_2} to find that
\begin{equation*}%\label{stima_apriori_2_4.1}
\begin{split}
\gamma & \left\{\Vert\varphi_{t}\Vert^2_{L^2_\gamma(\mathbb R; H^m(\mathbb T))}+\Vert\varphi_{x}\Vert^2_{L^2_\gamma(\mathbb R;H^m(\mathbb T))}\right\}\\
&\le \frac{C}{\gamma}\left\{\Vert\varphi_{0,x}\Vert^2_{L^\infty(\mathbb R;H^{m+2}(\mathbb T))}\Vert g\Vert^2_{L^2_\gamma(\mathbb R; H^2(\mathbb T))} +\Vert g\Vert^2_{L^2_\gamma(\mathbb R; H^m(\mathbb T))}\right\}\,,\quad\forall\,\gamma\ge\max\{\gamma_0,\widetilde{\gamma}_0,\widetilde{\gamma}_1\}=:\gamma_1\,.
\end{split}
\end{equation*}
\end{proof}

Under the same assumptions of Proposition \ref{prop_stima_tame} we obtain an estimate of the $L^2_\gamma(\mathbb R; H^{m-1}(\mathbb T))$-norm of $\varphi^\prime_{tt}$, namely we prove the following
\begin{proposition}\label{prop_stima_tame_2}
Under the same assumptions of Proposition \ref{prop_stima_tame} there exists a positive constant $C_1=C_1\left(\mu,\delta, m\right)$ such that for all sufficiently smooth functions $\varphi^\prime:\mathbb R\times\mathbb T\rightarrow\mathbb R$, with zero spatial mean, and for all $\gamma\ge 1$ the following estimate holds true
\begin{equation}\label{stima_dm-1phitt}
\begin{split}
&\Vert\varphi^\prime_{tt}\Vert_{L^2_\gamma(\mathbb R;H^{m-1}(\mathbb T))}\le C_1\left\{\Vert\varphi^\prime_{x}\Vert_{L^2_\gamma(\mathbb R;H^m(\mathbb T))}+\Vert\varphi_0\Vert_{L^\infty(\mathbb R;H^3(\mathbb T))}\Vert\varphi^\prime_{x}\Vert_{L^2_\gamma(\mathbb R;H^m(\mathbb T))}\right.\\
&\qquad\qquad\qquad\quad \left.+\Vert\varphi_{0,x}\Vert_{L^\infty(\mathbb R;H^{m}(\mathbb T))}\Vert\varphi^\prime_{x}\Vert_{L^2_\gamma(\mathbb R;H^2(\mathbb T))}+\Vert g\Vert_{L^2_\gamma(\mathbb R;H^{m-1}(\mathbb T))}\right\}\,,
\end{split}
\end{equation}
where $g:=\mathbb{L}^\prime[\varphi_0]\varphi^\prime$.
\end{proposition}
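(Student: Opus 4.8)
The plan is to read off $\varphi^\prime_{tt}$ directly from the linearized equation and estimate the resulting right-hand side in $H^{m-1}(\mathbb T)$, following very closely the $G_j$-analysis already carried out in the proof of Proposition \ref{prop_stima_tame}. Starting from the form \eqref{operatore_lin1} of the operator — which, crucially, involves only spatial derivatives of $\phi_0$, no time derivative — and using that $\mathbb{L}^\prime[\varphi_0]\varphi^\prime=g$, one has the pointwise identity
\[
\varphi^\prime_{tt}=\mu\varphi^\prime_{xx}+\bigl(2\mathbb H[\phi_{0,x}\phi^\prime_x]-[\phi^\prime;\mathbb H]\phi_{0,xx}-[\phi_0;\mathbb H]\phi^\prime_{xx}\bigr)_x+g\,.
\]
First I would apply $\partial_x^{m-1}$ and take $L^2(\mathbb T)$-norms, so that by \eqref{ident_norme} and the zero-mean hypothesis it suffices to bound the $H^{m-1}(\mathbb T)$-norm of each term on the right. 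The term $\mu\varphi^\prime_{xx}$ contributes $|\mu|\,\Vert\varphi^\prime_x\Vert_{H^m(\mathbb T)}$ at once, and the term $g$ contributes $\Vert g\Vert_{H^{m-1}(\mathbb T)}$; these account for the first and last summands in \eqref{stima_dm-1phitt}, with the $\mu$-dependence of $C_1$ entering only here.

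For the term $\bigl(2\mathbb H[\phi_{0,x}\phi^\prime_x]\bigr)_x=2\mathbb H[\phi_{0,xx}\phi^\prime_x]+2\mathbb H[\phi_{0,x}\phi^\prime_{xx}]$ I would use the $L^2$-continuity of $\mathbb H$ (see \eqref{stima_hilbert}) together with a standard tame product estimate in $H^{m-1}(\mathbb T)$, and then reduce the resulting norms of $\phi_0,\phi^\prime$ to norms of $\varphi_0,\varphi^\prime$ by the Sobolev continuity of $\mathbb H$, Poincar\'e's inequality for zero-mean functions and the imbedding $H^1(\mathbb T)\hookrightarrow L^\infty(\mathbb T)$ — exactly the chain of reductions \eqref{sobolev-poincare}, \eqref{stima_G3_0}--\eqref{magg_4} used for $G_3$. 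This produces precisely the contributions $\Vert\varphi_0\Vert_{L^\infty(\mathbb R;H^3(\mathbb T))}\Vert\varphi^\prime_x\Vert_{L^2_\gamma(\mathbb R;H^m(\mathbb T))}$ and $\Vert\varphi_{0,x}\Vert_{L^\infty(\mathbb R;H^m(\mathbb T))}\Vert\varphi^\prime_x\Vert_{L^2_\gamma(\mathbb R;H^2(\mathbb T))}$.

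The two commutator terms are handled with the commutator estimates of \ref{stima_commutatore}. For $\bigl([\phi^\prime;\mathbb H]\phi_{0,xx}\bigr)_x=[\phi^\prime_x;\mathbb H]\phi_{0,xx}+[\phi^\prime;\mathbb H]\phi_{0,xxx}$ — where $\phi^\prime$ is the low-order unknown and $\phi_0$ the high-order coefficient — I would apply $\partial_x^{m-1}$, use that $\mathbb H$ commutes with $\partial_x$ to rewrite $\partial_x^{m-1}$ of each summand as a principal commutator plus double-commutator remainders (via $[\partial_x^k,[\phi_0;\mathbb H]]=[[\partial_x^k;\phi_0];\mathbb H]$), and invoke Lemma \ref{lemma_comm} and Lemma \ref{lemma_comm_2}; the one derivative gained by each commutator with $\mathbb H$ is what keeps the estimate at the level $\Vert\varphi^\prime_x\Vert_{H^2(\mathbb T)}$ against a high-order norm of $\varphi_0$, rather than $\Vert\varphi^\prime_x\Vert_{H^{m+1}(\mathbb T)}$. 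The delicate term — the step I expect to be the main obstacle — is $\bigl([\phi_0;\mathbb H]\phi^\prime_{xx}\bigr)_x$: after $m-1$ differentiations its worst contribution is $[\phi_0;\mathbb H]\partial_x^{m+2}\varphi^\prime$, where one must exploit the order-$(-1)$ smoothing of $[\phi_0;\mathbb H]$ (Lemma \ref{lemma_comm}) to avoid a loss of derivatives, while the double-commutator remainder $[[\partial_x^{m-1};\phi_0];\mathbb H]\phi^\prime_{xxx}$ is controlled by $\Vert\varphi_{0,x}\Vert_{H^{m-1}(\mathbb T)}\Vert\varphi^\prime_x\Vert_{H^2(\mathbb T)}$ by means of Lemma \ref{lemma_comm_2}, precisely as in the treatment of $G_5$, see \eqref{stima_G5_norma1}--\eqref{stima_G5_norma2}.

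Finally, since the displayed identity is purely algebraic — it generates no powers of $\gamma$ and no time derivatives of $\varphi^\prime$ other than $\varphi^\prime_{tt}$ itself — I would multiply by $e^{-\gamma t}$ and integrate over $\mathbb R_t$, which turns the pointwise-in-$t$ bounds above into \eqref{stima_dm-1phitt} for every $\gamma\ge1$ with a constant $C_1=C_1(\mu,\delta,m)$ independent of $\gamma$. The stated $\varphi_0$-norms suffice because, as noted, $\mathbb L^\prime[\varphi_0]$ in the form \eqref{operatore_lin1} contains only spatial derivatives of $\phi_0$, so no norm of $\varphi_{0,t}$ occurs.
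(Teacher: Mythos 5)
Your proposal is correct and follows essentially the same strategy as the paper: solve the linearized equation pointwise for $\varphi'_{tt}$, apply $\partial_x^{m-1}$, estimate each resulting term in $L^2(\mathbb T)$ via the commutator Lemmas \ref{lemma_comm} and \ref{lemma_comm_2} together with tame product bounds, and then integrate in $t$ with the weight $e^{-\gamma t}$. The only real difference is cosmetic: you start from the compact form \eqref{operatore_lin1}, whereas the paper first passes to the expanded form \eqref{equazione_lin1} (isolating the coefficient $\mu-2\phi_{0,x}$) before differentiating, which leads to the eleven-term decomposition $G'_1,\dots,G'_{11}$; the two decompositions differ by elementary algebra and use identical commutator estimates. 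You also correctly identify the key point that $\mathbb{L}'[\varphi_0]$ contains no $t$-derivative of $\phi_0$, so no $\varphi_{0,t}$-norm appears and the bound holds for all $\gamma\ge1$ without absorption.
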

\begin{proof}
As in the proof of Proposition \ref{prop_stima_tame}, to simplify the notation we drop the superscript ${}^\prime$ in the unknown function $\varphi^\prime$. Because of \eqref{ident_norme}, it is enough providing an estimate of the $L^2_\gamma-$norm of $\partial^{m-1}_x\varphi_{tt}$; this can be obtained by applying the differential operator $\partial_x^{m-1}$ to the linearized equation \eqref{equazione_lin1} and solving the resulting equation for $\partial^{m-1}_x\varphi_{tt}$.

Applying $\partial^{m-1}_x$ to \eqref{equazione_lin1} we get
\begin{equation}\label{eq_dm-1phitt}
\begin{split}
\partial^{m-1}_x&\varphi_{tt}=\mu\partial^m_x\varphi_x-2\phi_{0,x}\partial_x^m\varphi_x-2\left[\partial^{m-1}_x\,;\,\phi_{0,x}\right]\varphi_{xx}+2\left[\mathbb H\,;\,\phi_{0,x}\right]\partial_x^m\phi_x\\
&+2\left[\partial^{m-1}_x\,;\,\left[\mathbb H\,;\,\phi_{0,x}\right]\right]\phi_{xx}+2\mathbb H\left[\phi_{0,xx}\partial_x^{m-1}\phi_x+\left[\partial_x^{m-1}\,;\,\phi_{0,xx}\right]\phi_x\right]\\
&-\left(\left[\phi\,;\,\mathbb H\right]\partial^m_x\phi_{0,x}+\left[\partial^{m-1}_x\,;\,\left[\phi\,;\,\mathbb H\right]\right]\phi_{0,xx}+\left[\phi_0\,;\,\mathbb H\right]\partial_x^m\phi_x+\left[\partial_x^{m-1}\,;\,\left[\phi_0\,;\,\mathbb H\right]\right]\phi_{xx}\right)_x\\
&+\partial_x^{m-1}g=\sum\limits_{j=1}^{11}G'_j+\partial^{m-1}_x g\,.
\end{split}
\end{equation}
In order to obtain the desired estimate, we provide a suitable bound of each term in the right-hand side above. Let $t\in\mathbb R$ and $\gamma\ge 1$ be arbitrarily fixed. In all the following estimates the $L^2-$continuity of $\mathbb H$ is used.

{\it Estimate of $G'_1$}:
\begin{equation}\label{stima_1}
\Vert G'_{1,\gamma}(t)\Vert_{L^2(\mathbb T)}\le\vert\mu\vert\Vert\partial^m_x\varphi_{x,\gamma}\Vert_{L^2(\mathbb T)}\le \vert\mu\vert\Vert\varphi_{x,\gamma}\Vert_{H^m(\mathbb T)}\,.
\end{equation}

{\it Estimate of $G'_2$}: H\"{o}lder's inequality and Sobolev's imbedding $H^1(\mathbb T)\hookrightarrow L^\infty(\mathbb T)$ yield
\begin{equation}\label{stima_2}
\begin{split}
\Vert & G'_{2,\gamma}(t)\Vert_{L^2(\mathbb T)}=\Vert -2\phi_{0,x}\partial_x^m\varphi_{x,\gamma}\Vert_{L^2(\mathbb T)}\le C\Vert\phi_{0,x}\Vert_{L^\infty(\mathbb T)}\Vert\partial^m_x\varphi_{x,\gamma}\Vert_{L^2(\mathbb T)}\\
&\le C\Vert\varphi_{0,x}\Vert_{H^1(\mathbb T)}\Vert\varphi_{x,\gamma}\Vert_{H^m(\mathbb T)}\le C\Vert\varphi_0\Vert_{H^2(\mathbb T)}\Vert\varphi_{x,\gamma}\Vert_{H^m(\mathbb T)}\,.
\end{split}
\end{equation}

{\it Estimate of $G'_3$}: estimate \eqref{stima_comm_4} and Sobolev's imbedding $H^1(\mathbb T)\hookrightarrow L^\infty(\mathbb T)$ yield
\begin{equation}\label{stima_3}
\begin{split}
\Vert & G'_{3,\gamma}(t)\Vert_{L^2(\mathbb T)}=\Vert -2\left[\partial_x^{m-1}\,;\,\phi_{0,x}\right]\varphi_{xx,\gamma}\Vert_{L^2(\mathbb T)}\\
&\le C\left\{\Vert\phi_{0,x}\Vert_{L^\infty(\mathbb T)}\Vert\varphi_{xx,\gamma}\Vert_{H^{m-1}(\mathbb T)}+\Vert\phi_{0,x}\Vert_{H^{m-1}(\mathbb T)}\Vert\varphi_{xx,\gamma}\Vert_{L^\infty(\mathbb T)}\right\}\\
&\le C\left\{\Vert\varphi_{0,x}\Vert_{H^1(\mathbb T)}\Vert\varphi_{x,\gamma}\Vert_{H^{m}(\mathbb T)}+\Vert\varphi_{0,x}\Vert_{H^{m-1}(\mathbb T)}\Vert\varphi_{xx,\gamma}\Vert_{H^1(\mathbb T)}\right\}\\
&\le C\left\{\Vert\varphi_{0}\Vert_{H^2(\mathbb T)}\Vert\varphi_{x,\gamma}\Vert_{H^{m}(\mathbb T)}+\Vert\varphi_{0,x}\Vert_{H^{m-1}(\mathbb T)}\Vert\varphi_{x,\gamma}\Vert_{H^2(\mathbb T)}\right\}\,.
\end{split}
\end{equation}

{\it Estimate of $G'_4$}: applying estimate \eqref{stima_comm_1} gives
\begin{equation}\label{stima_4}
\begin{split}
\Vert & G'_{4,\gamma}(t)\Vert_{L^2(\mathbb T)}=\Vert 2\left[\mathbb H\,;\,\phi_{0,x}\right]\partial^m_x\phi_{x,\gamma}\Vert_{L^2(\mathbb T)}\le C\Vert\phi_{0,x}\Vert_{H^1(\mathbb T)}\Vert\partial^m_x\phi_{x,\gamma}\Vert_{L^{2}(\mathbb T)}\\
&\le C\Vert\varphi_{0}\Vert_{H^2(\mathbb T)}\Vert\varphi_{x,\gamma}\Vert_{H^{m}(\mathbb T)}\,.
\end{split}
\end{equation}

{\it Estimate of $G'_5$}: in view of Lemma \ref{lemma_comm_2} and using that $\partial^{m-1}_x$ and $\mathbb H$ commute, we get
\begin{equation}\label{stima_5}
\begin{split}
\Vert & G'_{5,\gamma}(t)\Vert_{L^2(\mathbb T)}=\Vert 2\left[\partial^{m-1}_x\,;\,\left[\mathbb H\,;\,\phi_{0,x}\right]\right]\phi_{xx,\gamma}\Vert_{L^2(\mathbb T)}=\Vert -2\left[\left[\partial^{m-1}_x\,;\,\phi_{0,x}\right]\,;\,\mathbb H\right]\phi_{xx,\gamma}\Vert_{L^2(\mathbb T)}\\
&\le C\Vert\phi_{0,xx}\Vert_{H^{m-1}(\mathbb T)}\Vert\phi_{x,\gamma}\Vert_{H^{1}(\mathbb T)}\le C\Vert\varphi_{0,x}\Vert_{H^{m}(\mathbb T)}\Vert\varphi_{x,\gamma}\Vert_{H^{1}(\mathbb T)}\,.
\end{split}
\end{equation}

{\it Estimate of $G'_6$}: H\"{o}lder's inequality and Sobolev's imbedding $H^1(\mathbb T)\hookrightarrow L^\infty(\mathbb T)$ yield
\begin{equation}\label{stima_6}
\begin{split}
\Vert & G'_{6,\gamma}(t)\Vert_{L^2(\mathbb T)}=\Vert 2\mathbb H\left[\phi_{0,xx}\partial^{m-1}_x\phi_{x,\gamma}\right]\Vert_{L^2(\mathbb T)}\le 2\Vert\phi_{0,xx}\partial^{m-1}_x\phi_{x,\gamma}\Vert_{L^2(\mathbb T)}\\
&\le 2\Vert\phi_{0,xx}\Vert_{L^{\infty}(\mathbb T)}\Vert\partial^{m-1}_x\phi_{x,\gamma}\Vert_{L^{2}(\mathbb T)}\le C\Vert\varphi_{0,xx}\Vert_{H^{1}(\mathbb T)}\Vert\varphi_{x,\gamma}\Vert_{H^{m-1}(\mathbb T)}\\
&\le C\Vert\varphi_{0}\Vert_{H^{3}(\mathbb T)}\Vert\varphi_{x,\gamma}\Vert_{H^{m-1}(\mathbb T)}\,.
\end{split}
\end{equation}

{\it Estimate of $G'_7$}: estimate \eqref{stima_comm_4} and Sobolev's imbedding $H^1(\mathbb T)\hookrightarrow L^\infty(\mathbb T)$ yield
\begin{equation}\label{stima_7}
\begin{split}
\Vert & G'_{7,\gamma}(t)\Vert_{L^2(\mathbb T)}=\Vert 2\mathbb H\left[\left[\partial_x^{m-1}\,;\,\phi_{0,xx}\right]\phi_{x,\gamma}\right]\Vert_{L^2(\mathbb T)}\le 2\Vert \left[\partial_x^{m-1}\,;\,\phi_{0,xx}\right]\phi_{x,\gamma}\Vert_{L^2(\mathbb T)}\\
&\le C\left\{\Vert\phi_{0,xx}\Vert_{L^\infty(\mathbb T)}\Vert\phi_{x,\gamma}\Vert_{H^{m-1}(\mathbb T)}+\Vert\phi_{0,xx}\Vert_{H^{m-1}(\mathbb T)}\Vert\phi_{x,\gamma}\Vert_{L^\infty(\mathbb T)}\right\}\\
&\le C\left\{\Vert\varphi_{0,xx}\Vert_{H^1(\mathbb T)}\Vert\varphi_{x,\gamma}\Vert_{H^{m-1}(\mathbb T)}+\Vert\varphi_{0,xx}\Vert_{H^{m-1}(\mathbb T)}\Vert\varphi_{x,\gamma}\Vert_{H^1(\mathbb T)}\right\}\\
&\le C\left\{\Vert\varphi_{0}\Vert_{H^3(\mathbb T)}\Vert\varphi_{x,\gamma}\Vert_{H^{m-1}(\mathbb T)}+\Vert\varphi_{0,x}\Vert_{H^{m}(\mathbb T)}\Vert\varphi_{x,\gamma}\Vert_{H^1(\mathbb T)}\right\}\,.
\end{split}
\end{equation}

{\it Estimate of $G'_8$}: Leibniz's formula and estimates \eqref{stima_comm_1}, \eqref{stima_comm_2} give
\begin{equation}\label{stima_8}
\begin{split}
\Vert & G'_{8,\gamma}(t)\Vert_{L^2(\mathbb T)}=\Vert -\left(\left[\phi_\gamma\,;\,\mathbb H\right]\partial_x^m\phi_{0,x}\right)_x\Vert_{L^2(\mathbb T)}\\
&\le \Vert \left[\phi_{x,\gamma}\,;\,\mathbb H\right]\partial_x^m\phi_{0,x}\Vert_{L^2(\mathbb T)}+\Vert \left[\phi_\gamma\,;\,\mathbb H\right]\partial_x\partial_x^{m}\phi_{0,x}\Vert_{L^2(\mathbb T)}\\
&\le C\left\{\Vert\phi_{x,\gamma}\Vert_{H^1(\mathbb T)}\Vert\partial_x^m\phi_{0,x}\Vert_{L^{2}(\mathbb T)}+\Vert\phi_{x,\gamma}\Vert_{H^{1}(\mathbb T)}\Vert\partial_x^m\phi_{0,x}\Vert_{L^2(\mathbb T)}\right\}\\
&\le C\Vert\varphi_{x,\gamma}\Vert_{H^1(\mathbb T)}\Vert\varphi_{0,x}\Vert_{H^{m}(\mathbb T)}\,.
\end{split}
\end{equation}

{\it Estimate of $G'_9$}: by Leibniz's formula, Lemma \ref{lemma_comm_2} and using that $\partial^{m-1}_x$ and $\mathbb H$ commute, we get
\begin{equation}\label{stima_9}
\begin{split}
\Vert & G'_{9,\gamma}(t)\Vert_{L^2(\mathbb T)}=\left\Vert -\left(\left[\partial^{m-1}_x\,;\,\left[\phi_\gamma\,;\,\mathbb H\right]\right]\phi_{0,xx}\right)_x\right\Vert_{L^2(\mathbb T)}=\left\Vert -\left(\left[\left[\partial_x^{m-1}\,;\,\phi_{\gamma}\right]\,;\,\mathbb H\right]\phi_{0,xx}\right)_x \right\Vert_{L^2(\mathbb T)}\\
&\le \left\Vert\left[\left[\partial_x^{m-1}\,;\,\phi_{x,\gamma}\right]\,;\,\mathbb H\right]\phi_{0,xx} \right\Vert_{L^2(\mathbb T)}+\left\Vert \left[\left[\partial_x^{m-1}\,;\,\phi_{\gamma}\right]\,;\,\mathbb H\right]\phi_{0,xxx} \right\Vert_{L^2(\mathbb T)}\\
&\le C\Vert\phi_{xx,\gamma}\Vert_{H^{m-1}(\mathbb T)}\Vert\phi_{0,x}\Vert_{H^1(\mathbb T)}+C\Vert\phi_{x,\gamma}\Vert_{H^{m-1}(\mathbb T)}\Vert\phi_{0,xx}\Vert_{H^1(\mathbb T)}\le C\Vert\varphi_{x,\gamma}\Vert_{H^m(\mathbb T)}\Vert\varphi_0\Vert_{H^3(\mathbb T)}\,.
\end{split}
\end{equation}

{\it Estimate of $G'_{10}$}: Leibniz's formula and estimates \eqref{stima_comm_1}, \eqref{stima_comm_2} give
\begin{equation}\label{stima_10}
\begin{split}
\Vert & G'_{10,\gamma}(t)\Vert_{L^2(\mathbb T)}=\Vert -\left(\left[\phi_0\,;\,\mathbb H\right]\partial_x^m\phi_{x,\gamma}\right)_x\Vert_{L^2(\mathbb T)}\\
&\le \Vert \left[\phi_{0,x}\,;\,\mathbb H\right]\partial_x^m\phi_{x,\gamma}\Vert_{L^2(\mathbb T)}+\Vert \left[\phi_0\,;\,\mathbb H\right]\partial_x\partial_x^{m}\phi_{x,\gamma}\Vert_{L^2(\mathbb T)}\\
&\le C\Vert\phi_{0,x}\Vert_{H^1(\mathbb T)}\Vert\partial_x^m\phi_{x,\gamma}\Vert_{L^{2}(\mathbb T)}\le C\Vert\varphi_{0}\Vert_{H^2(\mathbb T)}\Vert\varphi_{x,\gamma}\Vert_{H^{m}(\mathbb T)}\,.
\end{split}
\end{equation}

{\it Estimate of $G'_{11}$}: by Leibniz's formula, Lemma \ref{lemma_comm_2} and using that $\partial^{m-1}_x$ and $\mathbb H$ commute, we get
\begin{equation}\label{stima_11}
\begin{split}
\Vert & G'_{11,\gamma}(t)\Vert_{L^2(\mathbb T)}=\left\Vert -\left(\left[\partial^{m-1}_x\,;\,\left[\phi_0\,;\,\mathbb H\right]\right]\phi_{xx,\gamma}\right)_x\right\Vert_{L^2(\mathbb T)}=\left\Vert -\left(\left[\left[\partial_x^{m-1}\,;\,\phi_{0}\right]\,;\,\mathbb H\right]\phi_{xx,\gamma}\right)_x \right\Vert_{L^2(\mathbb T)}\\
&\le \left\Vert\left[\left[\partial_x^{m-1}\,;\,\phi_{0,x}\right]\,;\,\mathbb H\right]\phi_{xx,\gamma} \right\Vert_{L^2(\mathbb T)}+\left\Vert \left[\left[\partial_x^{m-1}\,;\,\phi_{0}\right]\,;\,\mathbb H\right]\phi_{xxx,\gamma} \right\Vert_{L^2(\mathbb T)}\\
&\le C\Vert\phi_{0,xx}\Vert_{H^{m-1}(\mathbb T)}\Vert\phi_{x,\gamma}\Vert_{H^1(\mathbb T)}+C\Vert\phi_{0,x}\Vert_{H^{m-1}(\mathbb T)}\Vert\phi_{xx,\gamma}\Vert_{H^1(\mathbb T)}\le C\Vert\varphi_{0,x}\Vert_{H^m(\mathbb T)}\Vert\varphi_{x,\gamma}\Vert_{H^2(\mathbb T)}\,.
\end{split}
\end{equation}
Using \eqref{stima_1}--\eqref{stima_11} above to estimate the right-hand side of \eqref{eq_dm-1phitt} yields
\begin{equation*}%\label{stima_dm-1phitt_0}
\begin{split}
&\Vert\partial^{m-1}_x\varphi_{tt,\gamma}\Vert_{L^2(\mathbb T)}\le\vert\mu\vert\Vert\varphi_{x,\gamma}\Vert_{H^m(\mathbb T)}+C\left\{\Vert\varphi_0\Vert_{H^3(\mathbb T)}\Vert\varphi_{x,\gamma}\Vert_{H^m(\mathbb T)}\right.\\
&\qquad\qquad\qquad\quad \left.+\Vert\varphi_{0,x}\Vert_{H^{m}(\mathbb T)}\Vert\varphi_{x,\gamma}\Vert_{H^2(\mathbb T)}\right\}+\Vert \partial^{m-1}_xg_\gamma\Vert_{L^{2}(\mathbb T)}\,,
\end{split}
\end{equation*}
then integration in time and H\"older's inequality give
\begin{equation*}%\label{stima_dm-1phitt_1}
\begin{split}
&\Vert\partial^{m-1}_x\varphi_{tt}\Vert_{L^2_\gamma(\mathbb R\times\mathbb T)}\le\vert\mu\vert\Vert\varphi_{x}\Vert_{L^2_\gamma(\mathbb R;H^m(\mathbb T))}+C\left\{\Vert\varphi_0\Vert_{L^\infty(\mathbb R;H^3(\mathbb T))}\Vert\varphi_{x}\Vert_{L^2_\gamma(\mathbb R;H^m(\mathbb T))}\right.\\
&\qquad\qquad\qquad\quad \left.+\Vert\varphi_{0,x}\Vert_{L^\infty(\mathbb R;H^{m}(\mathbb T))}\Vert\varphi_{x}\Vert_{L^2_\gamma(\mathbb R;H^2(\mathbb T))}\right\}+\Vert g\Vert_{L^2_\gamma(\mathbb R;H^{m-1}(\mathbb T))}\,,
\end{split}
\end{equation*}
which provides the desired result, in view of \eqref{ident_norme}.
\end{proof}
%%%%%%%%%%%%%%%%%%%%%%%%%%%%%%%%%%%%%%%%%%%%%%%%%%%%%%%%%%%%%%%%%%
\section{Estimate of the second order derivative of $\mathbb{L}$}\label{sec_2der}
In order to apply the Nash-Moser method, we need to have a suitable estimate for the second order derivative of the nonlinear operator \eqref{operatore_nonlin} at a given state $\varphi_0$. From \eqref{operatore_lin1} one computes
\begin{equation*}%\label{operatore_bil}
\begin{split}
\mathbb L^{\prime\prime}&[\varphi_0](\varphi, \psi):=\frac{d}{d\varepsilon}\left\{\mathbb L^\prime[\varphi_0+\varepsilon\psi]\varphi\right\}_{\vert\varepsilon=0}\\
&=\frac{d}{d\varepsilon}\big\{\varphi_{tt}-\mu\varphi_{xx}-\big(2\mathbb H\left[(\phi_{0,x}+\varepsilon\Psi_x)\phi_x\right]-\left[\phi\,;\,\mathbb H\right]\left(\phi_{0,xx}+\varepsilon\Psi_{xx}\right)-\left[\phi_0+\varepsilon\Psi\,;\,\mathbb H\right]\phi_{xx}\big)_x\big\}_{\vert\varepsilon=0}\\
&=\big(-2\mathbb H\left[\Psi_x\phi_x\right]+\left[\phi\,;\,\mathbb H\right]\Psi_{xx}+\left[\Psi\,;\,\mathbb H\right]\phi_{xx}\big)_x\,,
\end{split}
\end{equation*}
where it is set
\begin{equation*}
\phi_0:=\mathbb H[\varphi_0]\,,\quad\phi:=\mathbb H[\varphi]\,,\quad \Psi:=\mathbb H[\psi]\,.
\end{equation*}
We look now for an estimate of the second derivative $\mathbb L^{\prime\prime}[\varphi_0](\varphi, \psi)$ in the space $L^2_\gamma(\mathbb R; H^m(\mathbb T))$, with given integer $m$ and $\gamma$ sufficiently large. To this end we need to provide a suitable estimate of each term involved in the expression of  $\mathbb L^{\prime\prime}[\varphi_0](\varphi, \psi)$ above.

{\em Estimate for the 1st term $\left(-2\mathbb H\left[\Psi_x\phi_x\right]\right)_x$:} for fixed $t\in\mathbb R$ and $\gamma\ge 1$, we first commute $\mathbb H$ and $\partial_x$, then we use the continuity of $\mathbb H$ in $H^m(\mathbb T)$, Leibniz's formula, Lemma \ref{lemma_comm_1} (estimate \eqref{stima_prod_4}) and the Sobolev imbedding $H^1(\mathbb T)\hookrightarrow L^\infty(\mathbb T)$ to get
\begin{equation}\label{stima_punt_1}
\begin{split}
\Vert &\left(-2\mathbb H\left[\Psi_x\phi_x\right]\right)_x\Vert_{H^m(\mathbb T)}\le C\Vert\left(\Psi_x\phi_x\right)_x\Vert_{H^m(\mathbb T)}\le C\left\{\Vert\Psi_{xx}\phi_x\Vert_{H^m(\mathbb T)}+\Vert\Psi_{x}\phi_{xx}\Vert_{H^m(\mathbb T)}\right\}\\
&\le C\left\{\Vert\Psi_{xx}\Vert_{H^m(\mathbb T)}\Vert\phi_{x}\Vert_{L^\infty(\mathbb T)}+\Vert\Psi_{xx}\Vert_{L^\infty(\mathbb T)}\Vert\phi_{x}\Vert_{H^m(\mathbb T)}+\Vert\phi_{xx}\Vert_{H^m(\mathbb T)}\Vert\Psi_{x}\Vert_{L^\infty(\mathbb T)}\right.\\
&\qquad\left. +\Vert\phi_{xx}\Vert_{L^\infty(\mathbb T)}\Vert\Psi_{x}\Vert_{H^m(\mathbb T)}\right\}\\
%&\le C\left\{\Vert\psi_{xx}\Vert_{H^{m}(\mathbb T)}\Vert\varphi_{x}\Vert_{H^1(\mathbb T)}+\Vert\psi_{xx}\Vert_{H^1(\mathbb T)}\Vert\varphi_{x}\Vert_{H^m(\mathbb T)}+\Vert\varphi_{xx}\Vert_{H^m(\mathbb T)}\Vert\psi_{x}\Vert_{H^1(\mathbb T)}\right.\\
%&\qquad\left. +\Vert\varphi_{xx}\Vert_{H^1(\mathbb T)}\Vert\psi_{x}\Vert_{H^m(\mathbb T)}\right\}\\
&\le C\left\{\Vert\psi_{x}\Vert_{H^{m+1}(\mathbb T)}\Vert\varphi_{x}\Vert_{H^1(\mathbb T)}+\Vert\psi_{x}\Vert_{H^2(\mathbb T)}\Vert\varphi_{x}\Vert_{H^m(\mathbb T)}+\Vert\varphi_{x}\Vert_{H^{m+1}(\mathbb T)}\Vert\psi_{x}\Vert_{H^1(\mathbb T)}\right.\\
&\qquad\left. +\Vert\varphi_{x}\Vert_{H^2(\mathbb T)}\Vert\psi_{x}\Vert_{H^m(\mathbb T)}\right\}\\
&\le C\left\{\Vert\psi_{x}\Vert_{H^{m+1}(\mathbb T)}\Vert\varphi_{x}\Vert_{H^2(\mathbb T)}+\Vert\psi_{x}\Vert_{H^2(\mathbb T)}\Vert\varphi_{x}\Vert_{H^{m+1}(\mathbb T)}\right\}\,.
\end{split}
\end{equation}
Then we multiply the square of the $H^m(\mathbb T)-$norm by $e^{-2\gamma t}$, integrate over $\mathbb R_t$ and use H\"older's inequality to find
\begin{equation}\label{stima_2der_1}
\begin{split}
\Vert &\left(-2\mathbb H\left[\Psi_x\phi_x\right]\right)_x\Vert_{L^2_\gamma(\mathbb R;H^m(\mathbb T))}\\
&\le C\left\{\Vert\psi_{x}\Vert_{L^2_\gamma(\mathbb R; H^{m+1}(\mathbb T))}\Vert\varphi_{x}\Vert_{L^\infty(\mathbb R;H^2(\mathbb T))}+\Vert\psi_{x}\Vert_{L^\infty(\mathbb R;H^2(\mathbb T))}\Vert\varphi_{x}\Vert_{L^2_\gamma(\mathbb R;H^{m+1}(\mathbb T))}\right\}\,.
\end{split}
\end{equation}

{\em Estimates for the 2nd and the 3rd terms $\left(\left[\phi\,;\,\mathbb H\right]\Psi_{xx}\right)_x$, $\left(\left[\Psi\,;\,\mathbb H\right]\phi_{xx}\right)_x$:} since the role of $\varphi$ and $\psi$ in the two terms before is exchanged, it is enough to exhibit the estimate for one of them. We compute the estimate for the first one of the two. In view of Lemma \ref{lemma_comm_paolo} and the Sobolev continuity of $\mathbb H$, we find for fixed $t\in\mathbb R$:
\begin{equation}\label{stima_punt_2}
\begin{split}
\Vert &\left(\left[\phi\,;\,\mathbb H\right]\Psi_{xx}\right)_x\Vert_{H^m(\mathbb T)}\le C\Vert \left[\phi\,;\,\mathbb H\right]\Psi_{xx}\Vert_{H^{m+1}(\mathbb T)}\\
&\le C\Vert\partial^{m+1}_x\phi\Vert_{L^2(\mathbb T)}\Vert\Psi_{xx}\Vert_{H^1(\mathbb T)}\le C\Vert\varphi_{x}\Vert_{H^m(\mathbb T)}\Vert\psi_x\Vert_{H^2(\mathbb T)}\,.
\end{split}
\end{equation}
Then we multiply the square of the $H^m(\mathbb T)-$norm by $e^{-2\gamma t}$, integrate over $\mathbb R_t$ and use H\"older's inequality to find
\begin{equation}\label{stima_2der_2}
\Vert\left(\left[\phi\,;\,\mathbb H\right]\Psi_{xx}\right)_x\Vert_{L^2_\gamma(\mathbb R;H^m(\mathbb T))}\le C\Vert\varphi_{x}\Vert_{L^2_\gamma(\mathbb R;H^m(\mathbb T))}\Vert\psi_x\Vert_{L^\infty(\mathbb R;H^2(\mathbb T))}\,.
\end{equation}
The same arguments can be applied to estimate the third term $\left(\left[\Psi\,;\,\mathbb H\right]\phi_{xx}\right)_x$, where the role of $\varphi$ and $\psi$ is changed; thus
\begin{equation}\label{stima_2der_3}
\Vert\left(\left[\Psi\,;\,\mathbb H\right]\phi_{xx}\right)_x\Vert_{L^2_\gamma(\mathbb R;H^m(\mathbb T))}\le C\Vert\psi_{x}\Vert_{L^2_\gamma(\mathbb R;H^m(\mathbb T))}\Vert\varphi_x\Vert_{L^\infty(\mathbb R;H^2(\mathbb T))}\,.
\end{equation}
Collecting \eqref{stima_2der_1}, \eqref{stima_2der_2}, \eqref{stima_2der_3} we have proved the following proposition.
\begin{proposition}\label{prop_2der}
For all $m\geq 1$ there exists a constant $C_m>0$ such that, for all sufficiently smooth $\varphi$, $\psi$ and all $\gamma\geq 1$, the following estimate holds true
\begin{equation}\label{stima_der2}
\begin{split}
\Vert\mathbb L^{\prime\prime}[\varphi_0]&(\varphi, \psi)\Vert_{L^2_{\gamma}(\mathbb R;H^m(\mathbb T))}\\
&\leq C_m\left\{\Vert\varphi_{x}\Vert_{L^2_\gamma(\mathbb R;H^{m+1}(\mathbb T))}\Vert\psi_x\Vert_{L^\infty(\mathbb R;H^2(\mathbb T))} + \Vert\psi_{x}\Vert_{L^2_\gamma(\mathbb R;H^{m+1}(\mathbb T))}\Vert\varphi_x\Vert_{L^\infty(\mathbb R;H^2(\mathbb T))}\right\}\,.
\end{split}
\end{equation}
\end{proposition}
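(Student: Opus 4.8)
The plan is to first make the bilinear form $\mathbb L^{\prime\prime}[\varphi_0](\varphi,\psi)$ explicit and then estimate it summand by summand in $L^2_\gamma(\mathbb R;H^m(\mathbb T))$. Since in \eqref{operatore_lin1} the operator $\mathbb L^\prime[\varphi_0]$ depends on the base state only through the three terms $\phi_{0,x}\phi_x$, $[\phi;\mathbb H]\phi_{0,xx}$ and $[\phi_0;\mathbb H]\phi_{xx}$ --- all of them \emph{affine} in $\phi_0=\mathbb H[\varphi_0]$ --- differentiating $\mathbb L^\prime[\varphi_0+\varepsilon\psi]\varphi$ with respect to $\varepsilon$ at $\varepsilon=0$ kills the $\varphi_0$-dependence and leaves the fixed, symmetric bilinear form
\[
\mathbb L^{\prime\prime}[\varphi_0](\varphi,\psi)=\bigl(-2\mathbb H[\Psi_x\phi_x]+[\phi;\mathbb H]\Psi_{xx}+[\Psi;\mathbb H]\phi_{xx}\bigr)_x,\qquad \phi=\mathbb H[\varphi],\ \Psi=\mathbb H[\psi].
\]
(This is just the fact that $\mathbb L$ has a purely quadratic nonlinearity, so its third derivative vanishes.) The expression is manifestly invariant under swapping $(\varphi,\psi)$ --- this exchanges the last two terms and fixes the first --- so I only need a bound for one of the commutator terms together with a bound for the product term.

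For the product term $(-2\mathbb H[\Psi_x\phi_x])_x$ I would work at fixed time $t$: commute $\mathbb H$ past $\partial_x$, use the $H^m(\mathbb T)$-boundedness of $\mathbb H$ (cf.\ \eqref{stima_hilbert}), open $(\Psi_x\phi_x)_x$ by Leibniz, and apply a Moser-type product estimate --- Lemma \ref{lemma_comm_1}, estimate \eqref{stima_prod_4} --- to dominate $\Vert\Psi_{xx}\phi_x\Vert_{H^m}$ and $\Vert\Psi_x\phi_{xx}\Vert_{H^m}$ by sums of terms of the form (high Sobolev norm of one factor)$\times$($L^\infty$ norm of the other). The $L^\infty$ norms are controlled by $H^1(\mathbb T)\hookrightarrow L^\infty(\mathbb T)$ and, crucially, Poincar\'e's inequality (legitimate because $\varphi,\psi$ have zero spatial mean) lets me replace, e.g., $\Vert\phi\Vert_{H^{k+1}}$ by $\Vert\varphi_x\Vert_{H^k}$ and trade $H^1$ for $H^2$, so that in every summand exactly one factor carries the top-order index $m+1$ and the other only the fixed order $2$. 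This produces a \emph{tame} pointwise bound $\lesssim\Vert\psi_x\Vert_{H^{m+1}}\Vert\varphi_x\Vert_{H^2}+\Vert\psi_x\Vert_{H^2}\Vert\varphi_x\Vert_{H^{m+1}}$; multiplying by $e^{-2\gamma t}$, integrating over $\mathbb R_t$ and applying H\"older (putting the low-order factor in $L^\infty_t$) yields \eqref{stima_2der_1}.

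For the commutator term $([\phi;\mathbb H]\Psi_{xx})_x$ the key input is the sharp commutator estimate Lemma \ref{lemma_comm_paolo}, which bounds $\Vert[\phi;\mathbb H]\Psi_{xx}\Vert_{H^{m+1}}$ by $\Vert\partial_x^{m+1}\phi\Vert_{L^2}\Vert\Psi_{xx}\Vert_{H^1}\lesssim\Vert\varphi_x\Vert_{H^m}\Vert\psi_x\Vert_{H^2}$; the outer $x$-derivative costs nothing since $\mathbb H$ commutes with $\partial_x$ and is bounded on $H^{m+1}$. Integrating in time and applying H\"older gives \eqref{stima_2der_2}, and the same argument with $\varphi$ and $\psi$ interchanged gives \eqref{stima_2der_3}. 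Collecting the three bounds yields the proposition.

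I expect the main obstacle to be organizing the product term $(-2\mathbb H[\Psi_x\phi_x])_x$ so that the bound is genuinely \emph{tame}: it is only the careful distribution of derivatives via Leibniz, together with the strategic use of Poincar\'e and of $H^1\hookrightarrow L^\infty$, that lets exactly one factor carry the high Sobolev norm in each term --- precisely the structure the Nash--Moser scheme will later require. The commutator terms, by contrast, are short once Lemma \ref{lemma_comm_paolo} (whose right-hand side is already of tame form with one derivative gained) is available.
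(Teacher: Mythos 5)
Your proposal follows the paper's proof essentially verbatim: same explicit bilinear form $\bigl(-2\mathbb H[\Psi_x\phi_x]+[\phi;\mathbb H]\Psi_{xx}+[\Psi;\mathbb H]\phi_{xx}\bigr)_x$, same term-by-term estimate using the Moser-type product bound \eqref{stima_prod_4} of Lemma \ref{lemma_comm_1} for the first summand and Lemma \ref{lemma_comm_paolo} for the two commutator summands, and the same pointwise-in-$t$ estimate followed by weighting and H\"older. (One small inaccuracy: Poincar\'e is not actually needed for the product term in \eqref{stima_punt_1} — the Sobolev imbedding $H^1\hookrightarrow L^\infty$ together with the $H^s$-boundedness of $\mathbb H$ suffices since every factor already carries at least one $x$-derivative — but this does not affect the argument.)
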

\begin{remark}\label{rem_der2}
Since \eqref{stima_der2} is obtained from the point-wise in $t$ estimates \eqref{stima_punt_1}, \eqref{stima_punt_2}, by repeating the same arguments above where the integration on $\mathbb R_t$ is replaced by integration on $(-\infty, T]$ (for given $T>0$), we get that \eqref{stima_der2} can be restated in the framework of the spaces $L^2_\gamma(-\infty,T;H^m(\mathbb T))$, $L^\infty(-\infty,T; H^m(\mathbb T))$; namely one has that
\begin{equation}\label{stima_der2_T}
\begin{split}
\Vert\mathbb L^{\prime\prime}[\varphi_0]&(\varphi, \psi)\Vert_{L^2_{\gamma}(-\infty,T;H^m(\mathbb T))}\\
&\leq C_m\left\{\Vert\varphi_{x}\Vert_{L^2_\gamma(-\infty,T;H^{m+1}(\mathbb T))}\Vert\psi_x\Vert_{L^\infty(-\infty,T;H^2(\mathbb T))} + \Vert\psi_{x}\Vert_{L^2_\gamma(-\infty,T;H^{m+1}(\mathbb T))}\Vert\varphi_x\Vert_{L^\infty(-\infty,T;H^2(\mathbb T))}\right\}\,,
\end{split}
\end{equation}
where $C_m>0$ is independent of $T$. Moreover, using suitable extension arguments (see Subsection \ref{stima_diff_1}), also estimates \eqref{stima_tame_1}, \eqref{stima_dm-1phitt} can be restated on $(-\infty, T]$ (for given $T>0$).
\end{remark}
%%%%%%%%%%%%%%%%%%%%%%%%%%%%%%%%%%%%%%%%%%%%%%%%%%%%%%%%%%%%%%%%%%%%%%%%%
\section{The nonlinear problem}\label{nlpb}
In this section we prove the existence of a solution to the nonlinear Cauchy problem
\begin{equation}\label{cp}
\begin{cases}
\varphi_{tt}-\mu\varphi_{xx}=\left(\mathbb H[\phi^2_x]-\left[\phi\,;\,\mathbb H\right]\phi_{xx}\right)_{x}\,, \quad {\rm in}\,\, [0,T]\times\mathbb{T}, \qquad\phi=\mathbb H[\varphi]\,,\\
\varphi(x,0)=\varphi^{(0)}(x) \quad {\rm in}\,\, \mathbb{T},\\
\varphi_t(x,0)=\varphi^{(1)}(x) \quad {\rm in}\,\, \mathbb{T},
\end{cases}
\end{equation}
as stated  in our main Theorem \ref{nonlin_th}.
In the following, we prove the result by applying the Nash--Moser's theorem for the resolution of the nonlinear problem above, see \cite{alinhacgerard07}, \cite{secchi-nash} for a thorough description of the method. In the following for the explanation of the Nash--Moser's method we will refer to the setting and notation of \cite{secchi-nash}.
\newline
Assume that $\varphi^{(0)}\in H^{\nu+1}(\mathbb T)$, $\varphi^{(1)}\in H^{\nu}(\mathbb T)$, for a general integer $\nu\ge 0$, and the sign condition \eqref{sign-cond} is satisfied.

We first consider a suitable lifting $\varphi^{a}:\mathbb R\times \mathbb{T} \rightarrow \mathbb{R}$ of the data $\varphi^{(0)}$, $\varphi^{(1)}$ such that
\begin{equation}\label{reg_phia}
\begin{split}
&\varphi^{a}\in \bigcap \limits_{k=0}^2 W^{k,\infty}(\mathbb R;H^{\nu+1-k}(\mathbb T)),\\
&\varphi^{a}_{|t=0}=\varphi^{(0)}, \quad \partial_t\varphi^{a}_{|t=0}=\varphi^{(1)}\quad {\rm in}\,\,\mathbb{T}\,, \\
&\mu- 2\mathbb{H}[\varphi^{a}]_x\geq 3\delta/4>0 \quad {\rm in}\,\,\,\,\mathbb R\times \mathbb{T}\,,\\
&\sum\limits_{k=0}^2\Vert\partial^k_t\varphi^a\Vert_{L^\infty(\mathbb R; H^{\nu+1-k}(\mathbb T))}\le C\left\{\Vert\varphi^{(0)}\Vert_{H^{\nu+1}(\mathbb T)}+\Vert\varphi^{(1)}\Vert_{H^\nu(\mathbb T)}\right\}\,,
\end{split}
\end{equation}
where $C>0$ is a suitable constant independent of $\varphi^{(0)}$, $\varphi^{(1)}$ and $\nu$.

We define $F^{a}:\mathbb R\times \mathbb{T} \rightarrow \mathbb{R}$ by setting
\begin{equation}\label{Fa}
F^{a}:=
\begin{cases}
-\mathbb L[\varphi^{a}] \quad  {\rm for}\quad t>0\,,\\
0 \quad  {\rm for}\quad t<0,
\end{cases}
\end{equation}
where $\mathbb L$ is the nonlinear operator \eqref{operatore_nonlin}. Using the regularity of $\varphi^{a}$ we get the following result.
\begin{lemma}\label{reg_Fa}
Let the functions $\varphi^a=\varphi^a(t,x)$, $F^a=F^a(t,x)$ be defined as in \eqref{reg_phia}, \eqref{Fa} and $\nu>1$. Then $F^{a}\in L^\infty(\mathbb R; H^{\nu-1}(\mathbb T))$ and we get the following estimate
\begin{equation}\label{stima_Fa}
\Vert F^a\Vert_{L^\infty(\mathbb R; H^{\nu-1}(\mathbb T)}\le\mathcal P(\Vert\varphi^{(0)}\Vert_{H^{\nu+1}(\mathbb T)}, \Vert\varphi^{(1)}\Vert_{H^{\nu}(\mathbb T)})\,,
\end{equation}
where $\mathcal P=\mathcal P(x,y)$ is a quadratic polynomial such that $\mathcal P(0,0)=0$.
\end{lemma}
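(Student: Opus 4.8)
The plan is to expand $\mathbb L[\varphi^a]$ explicitly using the formula \eqref{operatore_nonlin}, and estimate each resulting term in $L^\infty(\mathbb R;H^{\nu-1}(\mathbb T))$ by combining the regularity assumptions \eqref{reg_phia} on $\varphi^a$ with the $H^s$-continuity of the Hilbert transform \eqref{stima_hilbert}, the algebra/product estimates in $H^s(\mathbb T)$ for $s>1/2$, the commutator estimates collected in \ref{stima_commutatore}, and the Sobolev imbedding $H^1(\mathbb T)\hookrightarrow L^\infty(\mathbb T)$. Recall from \eqref{operatore_nonlin} that
\[
\mathbb L[\varphi^a]=\varphi^a_{tt}-\mu\varphi^a_{xx}-\big(\mathbb H[(\phi^a_x)^2]-[\phi^a;\mathbb H]\phi^a_{xx}\big)_x\,,\qquad \phi^a:=\mathbb H[\varphi^a]\,.
\]
First I would treat the linear part $\varphi^a_{tt}-\mu\varphi^a_{xx}$: by \eqref{reg_phia}, $\varphi^a_{tt}\in L^\infty(\mathbb R;H^{\nu-1}(\mathbb T))$ and $\varphi^a_{xx}\in L^\infty(\mathbb R;H^{\nu-1}(\mathbb T))$, and these contribute a term bounded linearly by $\sum_{k=0}^2\Vert\partial^k_t\varphi^a\Vert_{L^\infty(\mathbb R;H^{\nu+1-k}(\mathbb T))}$, hence (by the last line of \eqref{reg_phia}) linearly by $\Vert\varphi^{(0)}\Vert_{H^{\nu+1}(\mathbb T)}+\Vert\varphi^{(1)}\Vert_{H^\nu(\mathbb T)}$.

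Next I would handle the two quadratic terms. For $\big(\mathbb H[(\phi^a_x)^2]\big)_x$: commuting $\partial_x$ with $\mathbb H$ and using \eqref{stima_hilbert} bounds its $H^{\nu-1}$-norm by $C\Vert\partial_x\big((\phi^a_x)^2\big)\Vert_{H^{\nu-1}(\mathbb T)}=C\Vert(\phi^a_x)^2\Vert_{H^\nu(\mathbb T)}$; since $\nu>1$, the space $H^\nu(\mathbb T)$ is a Banach algebra (or one uses the product estimate $\Vert fg\Vert_{H^\nu}\le C\Vert f\Vert_{H^\nu}\Vert g\Vert_{L^\infty}+C\Vert f\Vert_{L^\infty}\Vert g\Vert_{H^\nu}$ together with $H^1\hookrightarrow L^\infty$), so this is bounded by $C\Vert\phi^a_x\Vert_{H^\nu(\mathbb T)}^2\le C\Vert\varphi^a\Vert_{H^{\nu+1}(\mathbb T)}^2$, which is quadratic in the data norms by \eqref{reg_phia}. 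For $\big([\phi^a;\mathbb H]\phi^a_{xx}\big)_x$: I expand the $x$-derivative by Leibniz into $[\phi^a_x;\mathbb H]\phi^a_{xx}+[\phi^a;\mathbb H]\phi^a_{xxx}$. The second piece is handled by the commutator estimate of Lemma-type \eqref{stima_comm_1} (roughly $\Vert[\phi^a;\mathbb H]\partial_x g\Vert_{H^{\nu-1}}\le C\Vert\phi^a_x\Vert_{H^{\nu}}\Vert g\Vert_{H^{\nu-1}}$ after accounting for the order-zero gain of the commutator), while the first piece is a genuine product of functions one of which has sufficient regularity, again giving a bound $\le C\Vert\varphi^a\Vert_{H^{\nu+1}(\mathbb T)}^2$. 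Summing over $t\in\mathbb R$ (taking the essential supremum) and over the finitely many terms, all contributions are dominated by a quadratic polynomial $\mathcal P(\Vert\varphi^{(0)}\Vert_{H^{\nu+1}(\mathbb T)},\Vert\varphi^{(1)}\Vert_{H^{\nu}(\mathbb T)})$ with $\mathcal P(0,0)=0$, as claimed.

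The routine care — rather than any deep obstacle — lies in book-keeping which commutator estimate to invoke for the term $\big([\phi^a;\mathbb H]\phi^a_{xx}\big)_x$ so that exactly $\nu-1$ derivatives land correctly and the right-hand side involves only $\Vert\varphi^a\Vert_{H^{\nu+1}}$ (equivalently the data norms via \eqref{reg_phia}), with no loss; this is where the precise statements in \ref{stima_commutatore} must be matched to the indices, and where the hypothesis $\nu>1$ is used to guarantee $H^{\nu-1}(\mathbb T)$-boundedness of the low-order factor through the imbedding $H^1(\mathbb T)\hookrightarrow L^\infty(\mathbb T)$.
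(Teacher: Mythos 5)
Your plan matches the paper's proof in its essentials: the paper likewise differentiates the quadratic term, writing the result as $3\mathbb H[\phi^a_x\phi^a_{xx}]-\phi^a_x\mathbb H[\phi^a_{xx}]-[\phi^a;\mathbb H]\phi^a_{xxx}$, and then bounds each piece using the $H^s$-continuity of $\mathbb H$, the product inclusion $H^\nu(\mathbb T)\cdot H^{\nu-1}(\mathbb T)\subset H^{\nu-1}(\mathbb T)$ of Lemma~\ref{lemma_prod_alg}, and a commutator estimate, together with the lifting bounds \eqref{reg_phia}. For the piece $[\phi^a;\mathbb H]\phi^a_{xxx}$ the precise tool is Lemma~\ref{lemma_comm_ale_2} with $m=\nu-1$, $p=2$, $v=\phi^a$, $f=\phi^a_x$, which yields $\Vert[\phi^a;\mathbb H]\phi^a_{xxx}\Vert_{H^{\nu-1}(\mathbb T)}\le C\Vert\partial^{\nu+1}_x\phi^a\Vert_{L^2(\mathbb T)}\Vert\phi^a_x\Vert_{H^1(\mathbb T)}\le C\Vert\varphi^a\Vert^2_{H^{\nu+1}(\mathbb T)}$ for every $\nu>1$ — exactly the index-matching you correctly flagged as the place requiring care, and where the looser bound you sketched (with $\Vert\phi^a_{xx}\Vert_{H^{\nu-1}}$ on the right) would need to be replaced by this sharper version to avoid spurious restrictions on $\nu$.
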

\begin{proof}
From the regularity of $\varphi^a$ we obtain the following:
\begin{equation*}
\begin{split}
&\varphi^a\in W^{2,\infty}(\mathbb R; H^{\nu-1}(\mathbb T))\,\,\Rightarrow\,\,\partial_{tt}\varphi^a\in L^{\infty}(\mathbb R; H^{\nu-1}(\mathbb T))\\
&\varphi^a\in L^{\infty}(\mathbb R; H^{\nu+1}(\mathbb T))\,\,\Rightarrow\,\,\partial_{xx}\varphi^a\in L^{\infty}(\mathbb R; H^{\nu-1}(\mathbb T))\,;
\end{split}
\end{equation*}
moreover we compute, for $\phi^a:=\mathbb H\left[\varphi^a\right]$,
\begin{equation}\label{parte_nonlin}
\begin{split}
&\left(\mathbb H\left[\left(\phi^a_x\right)^2\right]-\left[\phi^a\,;\,\mathbb H\right]\phi^a_{xx}\right)_x=2\mathbb H\left[\phi^a_x\,\phi^a_{xx}\right]-\left[\phi^a_x\,;\,\mathbb H\right]\phi^a_{xx}-\left[\phi^a\,;\,\mathbb H\right]\phi^a_{xxx}\\
&\qquad=3\mathbb H\left[\phi^a_x\,\phi^a_{xx}\right]-\phi^a_x\,\mathbb H\left[\phi^a_{xx}\right]-\left[\phi^a\,;\,\mathbb H\right]\phi^a_{xxx}\,.
\end{split}
\end{equation}
Hence $\left(\mathbb H\left[\left(\phi^a_x\right)^2\right]-\left[\phi^a\,;\,\mathbb H\right]\phi^a_{xx}\right)_x\in L^\infty(\mathbb R; H^{\nu-1}(\mathbb T))$ in view of the continuity of $\mathbb H$ in Sobolev spaces, the inclusion $H^\nu(\mathbb T)\cdot H^{\nu-1}(\mathbb T) \subset H^{\nu-1}(\mathbb T)$, that holds since $\nu>1$ (cf. Lemma \ref{lemma_prod_alg}), and Lemma \ref{lemma_comm_ale_2} (for $m=\nu-1$, $p=2$, $v=\phi^a$ and $f=\phi^a_x$). The above calculations yield that $\mathbb L(\varphi^a)$, hence $F^a$, belong to $L^\infty(\mathbb R; H^{\nu-1}(\mathbb T))$.

As regards to the estimate \eqref{stima_Fa}, from \eqref{operatore_nonlin}, \eqref{Fa} and \eqref{parte_nonlin} and using the Sobolev continuity of the Hilbert transform we first get
\begin{equation}\label{stima_Fa_resto}
\begin{split}
\Vert F^a &\Vert_{L^\infty(\mathbb R; H^{\nu-1}(\mathbb T))}\le \Vert\mathbb L(\varphi^a)\Vert_{L^\infty(\mathbb R; H^{\nu-1}(\mathbb T))}\le\Vert\varphi^a_{tt}\Vert_{L^\infty(\mathbb R; H^{\nu-1}(\mathbb T))}+\vert\mu\vert\Vert\varphi^a_{xx}\Vert_{L^\infty(\mathbb R; H^{\nu-1}(\mathbb T))}\\
&+3\Vert\mathbb H\left[\phi^a_x\,\phi^a_{xx}\right]\Vert_{L^\infty(\mathbb R; H^{\nu-1}(\mathbb T))}+\Vert\phi^a_x\,\mathbb H\left[\phi^a_{xx}\right]\Vert_{L^\infty(\mathbb R; H^{\nu-1}(\mathbb T))}+\Vert\left[\phi^a\,;\,\mathbb H\right]\phi^a_{xxx}\Vert_{L^\infty(\mathbb R; H^{\nu-1}(\mathbb T))}\\
&\le\Vert\varphi^a_{tt}\Vert_{L^\infty(\mathbb R; H^{\nu-1}(\mathbb T))}+\vert\mu\vert\Vert\varphi^a\Vert_{L^\infty(\mathbb R; H^{\nu+1}(\mathbb T))}+3\Vert\phi^a_x\,\phi^a_{xx}\Vert_{L^\infty(\mathbb R; H^{\nu-1}(\mathbb T))}\\
&+\Vert\phi^a_x\,\mathbb \varphi^a_{xx}\Vert_{L^\infty(\mathbb R; H^{\nu-1}(\mathbb T))}+\Vert\left[\phi^a\,;\,\mathbb H\right]\phi^a_{xxx}\Vert_{L^\infty(\mathbb R; H^{\nu-1}(\mathbb T))}\,.
\end{split}
\end{equation}
For fixed $t\in\mathbb R$ from Lemma \ref{lemma_prod_alg} (with $s=\nu$ and $m=\nu-1$) and using once again the Sobolev continuity of $\mathbb H$, we get
\begin{equation*}
\Vert\phi^a_x(t)\,\phi^a_{xx}(t)\Vert_{H^{\nu-1}(\mathbb T)}\le C\Vert\phi^a_x(t)\Vert_{H^{\nu}(\mathbb T)}\Vert\phi^a_{xx}(t)\Vert_{H^{\nu-1}(\mathbb T)}\le C\Vert\phi^a(t)\Vert^2_{H^{\nu+1}(\mathbb T)}\le C \Vert\varphi^a(t)\Vert^2_{H^{\nu+1}(\mathbb T)}\,.
\end{equation*}
Then taking the supremum over $\mathbb R_t$ we obtain
\begin{equation}\label{stima_Fa_3}
\Vert\phi^a_x\,\phi^a_{xx}\Vert_{L^\infty(\mathbb R;H^{\nu-1}(\mathbb T))}\le C \Vert\varphi^a\Vert^2_{L^\infty(\mathbb R;H^{\nu+1}(\mathbb T))}\,.
\end{equation}
The same arguments above can be used to estimate the term $\Vert\phi^a_x\,\mathbb \varphi^a_{xx}\Vert_{L^\infty(\mathbb R; H^{\nu-1}(\mathbb T))}$ by
\begin{equation}\label{stima_Fa_4}
\Vert\phi^a_x\,\varphi^a_{xx}\Vert_{L^\infty(\mathbb R;H^{\nu-1}(\mathbb T))}\le C \Vert\varphi^a\Vert^2_{L^\infty(\mathbb R;H^{\nu+1}(\mathbb T))}\,.
\end{equation}
As for the subsequent commutator term $\Vert\left[\phi^a\,;\,\mathbb H\right]\phi^a_{xxx}\Vert_{L^\infty(\mathbb R; H^{\nu-1}(\mathbb T))}$, for fixed $t\in\mathbb R$ we apply Lemma \ref{lemma_comm_ale_2} (for $m=\nu-1$, $p=2$, $v=\phi^a$ and $f=\phi^a_x$), the imbedding $H^\nu(\mathbb T)\hookrightarrow H^1(\mathbb T)$ (as $\nu>1$) and the Sobolev continuity of $\mathbb H$ to get
\begin{equation*}
\begin{split}
\Vert\left[\phi^a(t)\,;\,\mathbb H\right]&\phi^a_{xxx}(t)\Vert_{H^{\nu-1}(\mathbb T)}\le C\Vert\partial^{\nu+1}_x\phi^a(t)\Vert_{L^2(\mathbb T)}\Vert\phi^a_x(t)\Vert_{H^1(\mathbb T)}\\
&\le C\Vert\phi^a(t)\Vert_{H^{\nu+1}(\mathbb T)}\Vert\phi^a_x(t)\Vert_{H^\nu(\mathbb T)}\le C\Vert\varphi^a(t)\Vert^2_{H^{\nu+1}(\mathbb T)}\,.
\end{split}
\end{equation*}
Again taking the supremum over $\mathbb R_t$ gives
\begin{equation}\label{stima_Fa_5}
\Vert\left[\phi^a\,;\,\mathbb H\right]\phi^a_{xxx}\Vert_{L^\infty(\mathbb R;H^{\nu-1}(\mathbb T))}\le C\Vert\varphi^a\Vert^2_{L^\infty(\mathbb R;H^{\nu+1}(\mathbb T))}\,.
\end{equation}
Gathering estimates \eqref{stima_Fa_resto}-\eqref{stima_Fa_5} we obtain
\begin{equation}\label{stima_Fa_6}
\begin{split}
\Vert F^a\Vert_{L^\infty(\mathbb R; H^{\nu-1}(\mathbb T))}\le\Vert\varphi^a_{tt}\Vert_{L^\infty(\mathbb R; H^{\nu-1}(\mathbb T))}+\vert\mu\vert\Vert\varphi^a\Vert_{L^\infty(\mathbb R; H^{\nu+1}(\mathbb T))}+C\Vert\varphi^a\Vert^2_{L^\infty(\mathbb R;H^{\nu+1}(\mathbb T))}\,.
\end{split}
\end{equation}
Combining \eqref{stima_Fa_6} with $\eqref{reg_phia}_4$ we finally get
\begin{equation*}%\label{stima_Fa_fin}
\begin{split}
\Vert  & F^a\Vert_{L^\infty(\mathbb R; H^{\nu-1}(\mathbb T))}\le (1+\vert\mu\vert)\left\{\Vert\varphi^{(0)}\Vert_{H^{\nu+1}(\mathbb T)}+\Vert\varphi^{(1)}\Vert_{H^\nu(\mathbb T)}\right\}\\
&+C\left\{\Vert\varphi^{(0)}\Vert_{H^{\nu+1}(\mathbb T)}+\Vert\varphi^{(1)}\Vert_{H^\nu(\mathbb T)}\right\}^2=:\mathcal P(\Vert\varphi^{(0)}\Vert_{H^{\nu+1}(\mathbb T)}, \Vert\varphi^{(1)}\Vert_{H^{\nu}(\mathbb T)})\,.
\end{split}
\end{equation*}
\end{proof}
\begin{remark}\label{rmk:1}
It is worth to remark that in view of the definition of $F^a$, for $\nu>1$ we have the following:
\begin{itemize}
\item[(i)] for $t<0$, $F^a\equiv 0$ then $F^a\in L^2(-\infty, 0; H^{\nu-1}(\mathbb T))$;
\item[(ii)] for every $T>0$, $L^\infty(0,T; \mathcal X)\subset L^2(0,T; \mathcal X)$ (where $\mathcal X$ is any Banach space), hence $F^a\in L^\infty(0,T; H^{\nu-1}(\mathbb T))\subset L^2(0,T; H^{\nu-1}(\mathbb T))$.
\end{itemize}
From (i), (ii) above easily follows that $F^a\in  L^2(-\infty,T; H^{\nu-1}(\mathbb T))$. Moreover for $\gamma\ge 1$ fixed we get also that $F^a\in  L^2_\gamma(-\infty,T; H^{\nu-1}(\mathbb T))$; indeed from \eqref{stima_Fa} we compute
\begin{equation*}
\begin{split}
\Vert & F^a\Vert^2_{L^2_\gamma(-\infty,T; H^{\nu-1}(\mathbb T))}=\int_{-\infty}^T e^{-2\gamma t}\Vert F^a(t)\Vert^2_{H^{\nu-1}(\mathbb T)}\,dt=\int_{0}^T e^{-2\gamma t}\Vert F^a(t)\Vert^2_{H^{\nu-1}(\mathbb T)}\,dt\\
&\le \Vert F^a\Vert^2_{L^{\infty}(\mathbb R; H^{\nu-1}(\mathbb T))}\int_0^T e^{-2\gamma t}\,dt=\frac{1}{2\gamma}\left(1-e^{-2\gamma T}\right)\Vert F^a\Vert^2_{L^{\infty}(\mathbb R; H^{\nu-1}(\mathbb T))}\\
&\le \frac{1}{2\gamma}\left(1-e^{-2\gamma T}\right)\mathcal P^2(\Vert\varphi^{(0)}\Vert_{H^{\nu+1}(\mathbb T)}, \Vert\varphi^{(1)}\Vert_{H^{\nu}(\mathbb T)})\,.
\end{split}
\end{equation*}
The estimate above shows that for fixed $\gamma\ge 1$ we can make the norm of $F^a$ in $L^2_\gamma(-\infty,T; H^{\nu-1}(\mathbb T))$ as small as we want by choosing  a suitable value of $T$ depending on the norms of the initial data $\varphi^{(0)}$, $\varphi^{(1)}$ respectively in $H^{\nu+1}(\mathbb T)$ and $H^{\nu}(\mathbb T)$.
\end{remark}

%%%%%%%%%%%%%%%%%%%%%%%%%%%%%%%%%%%%%%%%%%%%%%%%%%%%%%%%%%%%%%%% NEW SECTION %%%%%%%%%%%%%%%%%%%%%%%%%%%%%%%%%%%%%%%%%
\subsection{An equivalent formulation of the problem \eqref{cp}}
In order to solve problem \eqref{cp} by the Nash-Moser's theorem, it is convenient to recast this problem in an equivalent form. We first look for a solution of the nonlinear problem
\begin{eqnarray}%\label{cp1}
\mathbb L[\varphi]=0\,, \quad {\rm in}\,\, [0,T]\times\mathbb{T}\,,\label{cp1}\\
\varphi_{\vert\,t=0}=\varphi^{(0)}\,\quad {\rm in}\,\, \mathbb{T},\label{cp2}\\
\partial_t\varphi_{\vert\,t=0}=\varphi^{(1)} \quad {\rm in}\,\, \mathbb{T},\label{cp3}
\end{eqnarray}
(see \eqref{operatore_nonlin} for the definition of $\mathbb L$) in the form of a perturbation of the function $\varphi^a(t,x)$ defined in the previous section, namely
\begin{equation}\label{forma_sol}
\varphi=\varphi^a+\varphi^\prime\,,
\end{equation}
where, according to $\eqref{reg_phia}_2$, we must have
\begin{equation}\label{ic}
\varphi^\prime_{\vert\,t=0}=0\,,\quad \partial_t\varphi^\prime_{\vert\,t=0}=0\,,\quad\mbox{in}\,\,\mathbb T\,.
\end{equation}
Replacing \eqref{forma_sol} into \eqref{cp1} we find
\begin{equation}\label{equiv}
\mathbb L[\varphi^a+\varphi^\prime]=0\quad\Leftrightarrow\quad\mathcal L[\varphi^\prime]=-\mathbb L [\varphi^a]\,,\quad\mbox{in}\,\,[0,T]\times\mathbb T\,,
\end{equation}
where we have set
\begin{equation}\label{oprt_L}
\mathcal L[\varphi^\prime]:=\mathbb L[\varphi^a+\varphi^\prime]-\mathbb L [\varphi^a]\,.
\end{equation}
In agreement with the functional setting introduced in Section \ref{not}, it is convenient to ``extend'' the problem \eqref{equiv} also for negative time; then, in view of \eqref{Fa}, we are led to solve the nonlinear problem
\begin{eqnarray}
\mathcal L[\varphi^\prime]=F^a\,,\quad\mbox{in}\,\,(-\infty,T]\times\mathbb T\,,\label{cp1_equiv}\\
\varphi^\prime=0\,,\quad\mbox{for}\,\,t<0\,.\label{cp2_equiv}
\end{eqnarray}

%%%%%%%%%%%%%%%%%%%%%%%%%%%%%%%%%%%%%%%%%%%%%%%%%%%%%%%%%%%% NEW SECTION %%%%%%%%%%%%%%%%%%%%%%%%%%%%%%%%%%%%%%%%%
\subsection{The functional setting}\label{sec_funct_sett}
In this section we fix the functional setting where the Nash-Moser's theorem will be applied.
For $m\ge 1$, we define $X_m$ to be the space of measurable functions $\varphi:(-\infty,T]\times\mathbb T\rightarrow\mathbb R$ such that
\begin{itemize}
\item[i)] $\varphi$ has zero spatial mean, i.e. $\widehat{\varphi}(0)=\displaystyle\int_{\mathbb T}\varphi\,dx=0$;
\item[ii)] $\varphi_{\vert\{t<0\}}=0$;
\item[iii)] the function $\varphi$ enjoys the following regularity assumptions
\begin{equation}\label{Xm}
\varphi\in L^2_\gamma(-\infty, T; H^{m+2}(\mathbb T))\cap H^1_\gamma(-\infty, T; H^{m+1}(\mathbb T))\cap H^2_\gamma(-\infty, T; H^m(\mathbb T))\,,
\end{equation}
where $\gamma\ge 1$ will be fixed in a suitable way later on.
\end{itemize}
In view of the equality \eqref{ident_norme} the space $X_m$ can be provided with the norm
\begin{equation}\label{normaXm}
\Vert\varphi\Vert_{X_m}^2:=\Vert\varphi_{x}\Vert^2_{L^2_\gamma(-\infty, T; H^{m+1}(\mathbb T))}+\Vert\varphi_t\Vert^2_{L^2_\gamma(-\infty, T; H^{m+1}(\mathbb T))}+\Vert\varphi_{tt}\Vert^2_{L^2_\gamma(-\infty, T; H^{m}(\mathbb T))}\,.
\end{equation}
We also define $Y_m$ to be
\begin{equation*}%\label{Ym}
Y_m:=\{F:  (-\infty,T]\times\mathbb T\rightarrow\mathbb R\,:\,\,F_{\vert\,\{t<0\}}=0\,,\,\,F\in L^2_\gamma(-\infty, T; H^m(\mathbb T))\}\,,
\end{equation*}
with the natural norm
\begin{equation*}%\label{normaYm}
\Vert F\Vert_{Y_m}:=\Vert F\Vert_{L^2_\gamma(-\infty, T; H^m(\mathbb T))}\,.
\end{equation*}
\begin{remark}\label{rmk:2}
From Remark \ref{rmk:1}, for $\nu>1$ we get that $F^a\in Y_{\nu-1}$ with $Y_{\nu-1}-$norm as small as we want for $T>0$ sufficiently small, depending on the initial data $\varphi^{(0)}$, $\varphi^{(1)}$.
\end{remark}

%%%%%%%%%%%%%%%%%%%%%%%%%%%%%%%%%%%%%%%%%%%%%%%%%%%%%%%%%%%%%%%% NEW SECTION %%%%%%%%%%%%%%%%%%%%%%%%%%%%%%%%%%%%%%
\subsection{Nash-Moser's theorem}\label{NM}
Now we are going to apply the Nash-Moser theorem to solve the problem \eqref{cp1_equiv}, \eqref{cp2_equiv} in the functional framework introduced in the previous section. Our goal is to find $\varphi^\prime\in X_m$ for a suitable $m\geq 0$ satisfying \eqref{cp1_equiv}--\eqref{cp2_equiv}.
\begin{remark}\label{rmk:3}
 The regularity assumptions defining the spaces $X_m$ (see \eqref{Xm}), imply that the solution to \eqref{cp1_equiv}, \eqref{cp2_equiv} belonging to  $X_m$ for some $m\geq 0$ is such that
\begin{equation*}
\begin{split}
\varphi^\prime \in H^1_\gamma(-\infty,T;H^{m+1}(\mathbb T))\hookrightarrow C((-\infty,T]; H^{m+1}(\mathbb T))\\
\partial_t\varphi^\prime \in H^1_\gamma(-\infty,T;H^{m}(\mathbb T))\hookrightarrow C((-\infty,T]; H^{m}(\mathbb T)).
\end{split}
\end{equation*}
Hence, the condition \eqref{cp2_equiv}  together with the continuity in time of $\varphi^\prime, \partial_t\varphi^\prime$ gives that \eqref{ic} are satisfied, i.e. $\varphi^\prime_{|\, t=0}=0, \partial_t\varphi^\prime_{|\, t=0}=0$.
\end{remark}

Here we closely follow the presentation of the method in \cite{secchi-nash}. According to \cite{secchi-nash} we will use the following notations
\begin{equation*}%\label{spazi_inf}
X_\infty:=\bigcap\limits_{m\ge 0}X_m\,,\quad Y_{\infty}:=\bigcap\limits_{m\geq 0}Y_m\,,\quad H^\infty(\mathbb T):=\bigcap\limits_{m\geq 0}H^m(\mathbb T)\,,
\end{equation*}
where $X_m$, $Y_m$ are the functional spaces introduced in Section \ref{sec_funct_sett}, and $\{X_m\}_{m\ge 0}$, $\{Y_m\}_{m\ge 0}$ are decreasing families of Banach spaces satisfying the \emph{smoothing hypothesis}, see \cite[Definition 2.3]{secchi-nash}.

We first need to collect a number of properties of the nonlinear operator $\mathcal L$ in \eqref{oprt_L}, as well as its first and second derivatives at a given point $\varphi_0\in X_\infty$. In order to do so, here we assume that the initial data $\varphi^{(0)}$, $\varphi^{(1)}$ in \eqref{cp2}, \eqref{cp3} belong to $H^\infty(\mathbb T)$, so that $\varphi^a$ has $H^\infty$-regularity in $x$ and $F^a\in Y_\infty$. Under these assumptions, we have that for every $m\ge 0$
\begin{equation*}
\mathcal L:X_m\rightarrow Y_m\,.
\end{equation*}
The operator $\mathcal L$ must satisfy all the assumptions in \cite[Theorem 2.4]{secchi-nash}, see for the reader's convenience \ref{sec_N-M}, Theorem \ref{N-M_paolo}; more precisely we have to check that the first order differential $d\mathcal L[\varphi_0]$ and the second order differential $d^2\mathcal L[\varphi_0]$ obey suitable estimates (see the Assumptions 2.1, 2.2 in \cite{secchi-nash}), as long as $\varphi_0$ belongs to $U\cap X_{\infty}$, being $U$ a suitable bounded open neighborhood of $0$ in $X_{m_0}$ for some $m_0\ge 0$.

Let us observe that, for every $\varphi_0\in X_\infty$, from \eqref{oprt_L} one computes
\begin{equation}\label{diffL}
\begin{split}
d\mathcal L[\varphi_0](\varphi^\prime)=\mathbb L^\prime[\varphi^a+\varphi_0]\varphi^\prime\,,\\
d^2\mathcal L[\varphi_0](\varphi^\prime, \psi^\prime)=\mathbb L^{\prime\prime}[\varphi^a+\varphi_0](\varphi^\prime, \psi^\prime)\,.
\end{split}
\end{equation}

%%%%%%%%%%%%%%%%%%%%%%%%%%%%%%%%%%%%%%%%%%%%%%%%%%%%%%%%%%%%%%%% NEW SECTION %%%%%%%%%%%%%%%%%%%%%%%%%%%%%%%%%%%%%
\subsubsection{Estimate for the first order differential of $\mathcal L$}\label{stima_diff_1}
In view of \eqref{diffL}, let us consider the equation
\begin{equation}\label{eq_diff}
\mathbb L^\prime[\varphi^a+\varphi_0]\varphi^\prime=g, \quad \mbox{in} \,\,(-\infty,T)\times \mathbb{T}.
\end{equation}
In order to have that the hypothesis of Nash-Moser's theorem is satisfied (see \cite[Assumption 2.2]{secchi-nash}) we need equation \eqref{eq_diff} to admit a unique solution in $X_\infty$ as long as $g\in Y_\infty$ (i.e. $d\mathcal L[\varphi_0](\varphi^\prime): X_{\infty} \rightarrow Y_{\infty}$ admits a right inverse operator). This comes as a consequence of Theorem \ref{teorema_2}.

In order to use the tame estimate \eqref{stima_tame_1} in Proposition \ref{prop_stima_tame}, we have to consider the equation \eqref{eq_diff} with the time extended to the whole real line (i.e. on $\mathbb{R}\times \mathbb{T}$). To this end, let us consider suitable extensions $\tilde g$ and $\tilde\varphi_0$ respectively of the source term $g$ and  the basic state $\varphi_0$ in such a way that
\begin{equation}\label{ext_cont}
\begin{split}
&\tilde g\in L^2_\gamma(\mathbb R;H^\infty(\mathbb T)), \quad \tilde\varphi_0\in L^\infty(\mathbb R; H^\infty(\mathbb T))\,,\\
&\Vert\tilde{g}\Vert_{L^2_\gamma(\mathbb R;H^m(\mathbb T))}\le C_{m} \Vert g\Vert_{L^2_\gamma(-\infty, T;H^m(\mathbb T))}\,,\\
&\Vert\tilde{\varphi}_0\Vert_{L^\infty(\mathbb R;H^m(\mathbb T))}\le C_{m} \Vert \varphi_0\Vert_{L^\infty(-\infty, T;H^m(\mathbb T))}\,,\\
&\Vert\tilde{\varphi}_{0,x}\Vert_{L^\infty(\mathbb R;H^m(\mathbb T))}\le C_{m} \Vert \varphi_{0,x}\Vert_{L^\infty(-\infty, T;H^m(\mathbb T))}\,,\quad\forall\,m\ge 0\,,\,\,\forall\,\gamma\ge 1\,,
\end{split}
\end{equation}
where for every $m$, $C_{m}$ is some positive constant depending on $m$ and independent of $\gamma$. The constant $C_{m}$ may be chosen independently of $T$.

Let us assume that the linearized time-extended equation
\begin{equation}\label{ext_eq}
\mathbb L^\prime[\varphi^a+\tilde\varphi_0]\tilde{\varphi}^\prime=\tilde g, \quad \mbox{in} \,\,\mathbb R\times \mathbb{T}
\end{equation}
has a unique solution: we want  to derive an energy estimate for it. Let $\tilde{\varphi}^\prime$ be such a solution. From Proposition \ref{prop_stima_tame}, by estimate \eqref{stima_tame_1} written for $m+1$ instead of $m$, we get that $\tilde\varphi^\prime$ of \eqref{ext_eq} satisfies the estimate
\begin{equation}\label{stima_diff}
\begin{split}
\gamma & \left\{\Vert\tilde{\varphi}^\prime_{t}\Vert^2_{L^2_\gamma(\mathbb R; H^{m+1}(\mathbb T))}+\Vert\tilde{\varphi}^\prime_{x}\Vert^2_{L^2_\gamma(\mathbb R;H^{m+1}(\mathbb T))}\right\}\\
&\le \frac{C}{\gamma}\left\{\Vert\varphi^a_x+\tilde{\varphi}_{0,x}\Vert^2_{L^\infty(\mathbb R;H^{m+3}(\mathbb T))}\Vert \tilde g\Vert^2_{L^2_\gamma(\mathbb R; H^2(\mathbb T))} +\Vert \tilde g\Vert^2_{L^2_\gamma(\mathbb R; H^{m+1}(\mathbb T))}\right\}\,,
\end{split}
\end{equation}
for all $\gamma\geq \gamma_1$ and $C$ and $\gamma_1$ depend increasingly and boundedly on $m,\delta$ and $\Vert\varphi^a+\tilde{\varphi}_0\Vert_{L^\infty(\mathbb R;H^5(\mathbb T))}$ and $\Vert\varphi^a_t+\tilde{\varphi}_{0,t}\Vert_{L^\infty(\mathbb R;H^2(\mathbb T))}$, provided that the sign condition
\begin{equation}\label{sign_phia_phi0}
\mu-2\mathbb{H}[\varphi^a+\tilde{\varphi}_0]_x\geq \delta/2\,,\quad\mbox{in}\,\,\mathbb R\times\mathbb T
\end{equation}
holds (see Proposition \ref{prop_stima_tame}). By construction, the function
\begin{equation*}
\varphi^\prime:=\tilde\varphi^{\prime}_{\vert\,\,(-\infty, T]}
\end{equation*}
provides a solution to the equation \eqref{eq_diff} and, from \eqref{ext_cont} and \eqref{stima_diff} we get
\begin{equation}\label{stima_diff_T}
\begin{split}
\gamma & \left\{\Vert\varphi^\prime_{t}\Vert^2_{L^2_\gamma(-\infty, T; H^{m+1}(\mathbb T))}+\Vert\varphi^\prime_{x}\Vert^2_{L^2_\gamma(-\infty,T;H^{m+1}(\mathbb T))}\right\}\\
&\le \gamma\left\{\Vert\tilde{\varphi}^\prime_{t}\Vert^2_{L^2_\gamma(\mathbb R; H^{m+1}(\mathbb T))}+\Vert\tilde{\varphi}^\prime_{x}\Vert^2_{L^2_\gamma(\mathbb R;H^{m+1}(\mathbb T))}\right\}\\
&\le \frac{C}{\gamma}\left\{\Vert\varphi^a_x+\tilde{\varphi}_{0,x}\Vert^2_{L^\infty(\mathbb R;H^{m+3}(\mathbb T))}\Vert \tilde g\Vert^2_{L^2_\gamma(\mathbb R; H^2(\mathbb T))} +\Vert \tilde g\Vert^2_{L^2_\gamma(\mathbb R; H^{m+1}(\mathbb T))}\right\}\\
&\le \frac{C}{\gamma}\left\{\left(\Vert\varphi^a_x\Vert^2_{L^\infty(\mathbb R;H^{m+3}(\mathbb T))}+\Vert\varphi_{0,x}\Vert^2_{L^\infty(-\infty, T;H^{m+3}(\mathbb T))}\right)\Vert g\Vert^2_{L^2_\gamma(-\infty,T; H^2(\mathbb T))} +\Vert g\Vert^2_{L^2_\gamma(-\infty,T; H^{m+1}(\mathbb T))}\right\}\,,
\end{split}
\end{equation}
for all $\gamma\ge\gamma_1$, being $\gamma_1$, $C>0$ defined as in \eqref{stima_diff} and again under the condition \eqref{sign_phia_phi0}.

Concerning the condition \eqref{sign_phia_phi0}, in view of $\eqref{reg_phia}_3$, \eqref{ext_cont} and the Sobolev imbeddings $H^1(\mathbb T)\hookrightarrow L^\infty(\mathbb T)$, $H^1_\gamma(-\infty, T; \mathcal X)\hookrightarrow L^\infty(-\infty,T; \mathcal X)$ (cf. Lemma \ref{sobolev-imb}) we get
\begin{equation}\label{sgn_restr}
\begin{split}
\mu &-2\mathbb{H}[\varphi^a+\tilde{\varphi}_0]_x\geq 3\delta/4-2\mathbb H[\tilde{\varphi}_0]_x\ge 3\delta/4-2\Vert\mathbb H[\tilde{\varphi}_0]_x\Vert_{L^\infty(\mathbb R\times\mathbb T)}\\
&\ge 3\delta/4-C\Vert\mathbb H[\tilde{\varphi}_0]_x\Vert_{L^\infty(\mathbb R;H^1(\mathbb T))}\ge 3\delta/4-C\Vert\tilde{\varphi}_{0,x}\Vert_{L^\infty(\mathbb R;H^1(\mathbb T))}\ge 3\delta/4-C\Vert\varphi_{0,x}\Vert_{L^\infty(-\infty,T;H^1(\mathbb T))}\\
&\ge 3\delta/4-C\Vert\varphi_{0,x}\Vert_{H^1_\gamma(-\infty,T;H^1(\mathbb T))}\ge 3\delta/4-C\Vert\varphi_0\Vert_{X_1}\,.
\end{split}
\end{equation}
From \eqref{sgn_restr} we deduce that condition \eqref{sign_phia_phi0}, and consequently the estimate \eqref{stima_diff_T}, are satisfied provided that
\begin{equation*}%\label{int_0}
\Vert\varphi_0\Vert_{X_1}\le\delta/4C\,.
\end{equation*}
On the other hand, from \eqref{ext_cont}, the Sobolev imbedding and $\eqref{reg_phia}_4$ we find that
\begin{equation}\label{CX4}
\begin{split}
\Vert &\varphi^a+\tilde{\varphi}_0\Vert_{L^\infty(\mathbb R;H^5(\mathbb T))}\le\Vert\varphi^a\Vert_{L^\infty(\mathbb R;H^5(\mathbb T))}+\Vert\tilde{\varphi}_0\Vert_{L^\infty(\mathbb R;H^5(\mathbb T))}\\
&\le \Vert\varphi^a\Vert_{L^\infty(\mathbb R;H^5(\mathbb T))}+C\Vert\varphi_0\Vert_{L^\infty(-\infty,T;H^5(\mathbb T))}\le \Vert\varphi^a\Vert_{L^\infty(\mathbb R;H^5(\mathbb T))}+C\Vert\varphi_0\Vert_{H^1_\gamma(-\infty,T;H^5(\mathbb T))}\\
&\le \Vert\varphi^a\Vert_{L^\infty(\mathbb R;H^5(\mathbb T))}+C\Vert\varphi_0\Vert_{X_4}\le C\left\{\Vert\varphi^{(0)}\Vert_{H^5(\mathbb T)}+\Vert\varphi^{(1)}\Vert_{H^4(\mathbb T)}+\Vert\varphi_0\Vert_{X_4}\right\} \,,
\end{split}
\end{equation}
and similarly
\begin{equation}\label{CX41}
\begin{split}
\Vert\varphi^a_t &+\tilde{\varphi}_{0\,,t}\Vert_{L^\infty(\mathbb R;H^2(\mathbb T))}\le \Vert\varphi^a_t\Vert_{L^\infty(\mathbb R;H^2(\mathbb T))}+ C \Vert\varphi_0\Vert_{X_2}\\
&\le C\left\{\Vert\varphi^{(0)}\Vert_{H^3(\mathbb T)}+\Vert\varphi^{(1)}\Vert_{H^2(\mathbb T)}+\Vert\varphi_0\Vert_{X_2} \right\} \,.
\end{split}
\end{equation}
Finally, we observe that the inequalities
\begin{equation}\label{imbeddingX}
\Vert\varphi\Vert_{X_1}\le\Vert\varphi\Vert_{X_2}\le\Vert\varphi\Vert_{X_4}\,,\quad\forall\varphi\in X_4\,,
\end{equation}
follow at once from the definition \eqref{normaXm} of the norm in $X_m$.

Let us define the bounded neighborhood $\mathcal U$ of $0$ in $X_4$ be setting
\begin{equation}\label{neighbrhd}
\mathcal U:=\left\{\varphi\in X_4\,:\,\,\Vert\varphi\Vert_{X_4}\le\delta/4C\right\}\,.
\end{equation}
Because of \eqref{CX4}-\eqref{imbeddingX}, one has
\begin{equation}\label{CX.1}
\begin{split}
\Vert &\varphi^a+\tilde{\varphi}_0\Vert_{L^\infty(\mathbb R;H^5(\mathbb T))}+\Vert\varphi^a_t +\tilde{\varphi}_{0\,,t}\Vert_{L^\infty(\mathbb R;H^2(\mathbb T))}\\
&\le C\left\{\Vert\varphi^{(0)}\Vert_{H^5(\mathbb T)}+\Vert\varphi^{(1)}\Vert_{H^4(\mathbb T)}\right\}+\delta/C\,,
\end{split}
\end{equation}
and the sign condition \eqref{sign_phia_phi0} holds true, as long as $\varphi_0\in\mathcal U\cap X_\infty$. According to the previous observations, let us set
\begin{equation}\label{Cgamma}
\begin{split}
\widehat{\gamma}_1=\widehat{\gamma}_1(m,\delta,\varphi^{(0)},\varphi^{(1)}):=\sup\limits_{\varphi_0\in\mathcal U\cap X_\infty} \gamma_1(m,\delta, \Vert\varphi^a+\tilde{\varphi}_0\Vert_{L^\infty(\mathbb R;H^5(\mathbb T))}, \Vert\varphi^a_t+\tilde{\varphi}_{0,t}\Vert_{L^\infty(\mathbb R;H^2(\mathbb T))})\,,\\
\widehat{C}_1=\widehat{C}_1(m,\delta,\varphi^{(0)},\varphi^{(1)}):=\sup\limits_{\varphi_0\in\mathcal U\cap X_\infty} C(m,\delta, \Vert\varphi^a+\tilde{\varphi}_0\Vert_{L^\infty(\mathbb R;H^5(\mathbb T))}, \Vert\varphi^a_t+\tilde{\varphi}_{0,t}\Vert_{L^\infty(\mathbb R;H^2(\mathbb T))})\,,
\end{split}
\end{equation}
where $\gamma_1$ and $C$ in the right-hand sides are the constants involved in \eqref{stima_diff_T}. From \eqref{stima_diff_T} and the above definitions we derive that
\begin{equation}\label{stima_diff_T1}
\begin{split}
\gamma & \left\{\Vert\varphi^\prime_{t}\Vert^2_{L^2_\gamma(-\infty, T; H^{m+1}(\mathbb T))}+\Vert\varphi^\prime_{x}\Vert^2_{L^2_\gamma(-\infty,T;H^{m+1}(\mathbb T))}\right\}\\
&\le \frac{\widehat{C}_1}{\gamma}\left\{\left(\Vert\varphi^a_x\Vert^2_{L^\infty(\mathbb R;H^{m+3}(\mathbb T))}+\Vert\varphi_{0,x}\Vert^2_{L^\infty(-\infty, T;H^{m+3}(\mathbb T))}\right)\Vert g\Vert^2_{L^2_\gamma(-\infty,T; H^2(\mathbb T))} +\Vert g\Vert^2_{L^2_\gamma(-\infty,T; H^{m+1}(\mathbb T))}\right\}\,,
\end{split}
\end{equation}
holds true for all $\varphi^{\prime}\in X_\infty$, $\varphi_0\in\mathcal U\cap X_{\infty}$ and $\gamma\ge\widehat{\gamma}_1$. Note that $\widehat{\gamma}_1$ and $\widehat{C}_1$ in \eqref{stima_diff_T1} now depend only on $m$, $\delta$ and the initial data $\varphi^{(0)}$, $\varphi^{(1)}$.

From now on, for every $m\ge 1$ let $\gamma$ be fixed such that $\gamma\ge\widehat{\gamma}_1$, being $\widehat{\gamma}_1$ defined as in \eqref{Cgamma}. For such $\gamma$, we consider the estimate \eqref{stima_dm-1phitt} (with $m+1$ instead of $m$), written for $\tilde{\varphi}^\prime$; we use \eqref{ext_cont} and Sobolev imbedding $H^1_\gamma(-\infty,T;\mathcal X)\hookrightarrow L^\infty(-\infty,T; \mathcal X)$ (see Lemma \ref{sobolev-imb}), to get
\begin{equation}\label{stima_dm-1tildephitt}
\begin{split}
&\Vert\varphi^\prime_{tt}\Vert_{L^2_\gamma(-\infty,T;H^{m}(\mathbb T))}\le\\
&\Vert\tilde{\varphi}^\prime_{tt}\Vert_{L^2_\gamma(\mathbb R;H^{m}(\mathbb T))}\le C_1\left\{\Vert\tilde{\varphi}^\prime_{x}\Vert_{L^2_\gamma(\mathbb R;H^{m+1}(\mathbb T))}+\Vert\varphi^a+\tilde{\varphi}_0\Vert_{L^\infty(\mathbb R;H^3(\mathbb T))}\Vert\tilde{\varphi}^\prime_{x}\Vert_{L^2_\gamma(\mathbb R;H^{m+1}(\mathbb T))}\right.\\
&\qquad\qquad\qquad\quad \left.+\Vert\varphi^a_x+\tilde{\varphi}_{0,x}\Vert_{L^\infty(\mathbb R;H^{m+1}(\mathbb T))}\Vert\tilde{\varphi}^\prime_{x}\Vert_{L^2_\gamma(\mathbb R;H^2(\mathbb T))}+\Vert \tilde g\Vert_{L^2_\gamma(\mathbb R;H^{m}(\mathbb T))}\right\}\\
%&\le C_1\left\{\Vert{\widetilde{\varphi}}^\prime_{x}\Vert_{L^2_\gamma(\mathbb R;H^{m+1}(\mathbb T))}+\left(\Vert\varphi^a\Vert_{L^\infty(\mathbb R;H^3(\mathbb T))}+\Vert{\varphi}_0\Vert_{L^\infty(-\infty,T;H^3(\mathbb T))}\right)\Vert{\widetilde{\varphi}}^\prime_{x}\Vert_{L^2_\gamma(\mathbb R;H^{m+1}(\mathbb T))}\right.\\
%&\qquad\qquad\qquad\quad \left.+\left(\Vert\varphi^a_x\Vert_{L^\infty(\mathbb R;H^{m+1}(\mathbb T))}+\Vert{\varphi}_{0,x}\Vert_{L^\infty(-\infty,T;H^{m+1}(\mathbb T))}\right)\Vert{\widetilde{\varphi}}^\prime_{x}\Vert_{L^2_\gamma(\mathbb R;H^2(\mathbb T))}+\Vert  g\Vert_{L^2_\gamma(-\infty,T;H^{m}(\mathbb T))}\right\}\\
&\le C_1\left\{\Vert{\widetilde{\varphi}}^\prime_{x}\Vert_{L^2_\gamma(\mathbb R;H^{m+1}(\mathbb T))}+\left(\Vert\varphi^a\Vert_{L^\infty(\mathbb R;H^3(\mathbb T))}+\Vert{\varphi}_0\Vert_{H^1_\gamma(-\infty,T;H^3(\mathbb T))}\right)\Vert{\widetilde{\varphi}}^\prime_{x}\Vert_{L^2_\gamma(\mathbb R;H^{m+1}(\mathbb T))}\right.\\
&\qquad\qquad\qquad\quad \left.+\left(\Vert\varphi^a_x\Vert_{L^\infty(\mathbb R;H^{m+1}(\mathbb T))}+\Vert{\varphi}_{0,x}\Vert_{H^1_\gamma(-\infty,T;H^{m+1}(\mathbb T))}\right)\Vert{\widetilde{\varphi}}^\prime_{x}\Vert_{L^2_\gamma(\mathbb R;H^2(\mathbb T))}+\Vert  g\Vert_{L^2_\gamma(-\infty,T;H^{m}(\mathbb T))}\right\}\\
&\le C_1\left\{\left(1+\Vert\varphi^a\Vert_{L^\infty(\mathbb R;H^3(\mathbb T))}+\Vert{\varphi}_0\Vert_{X_2}\right)\Vert{\widetilde{\varphi}}^\prime_{x}
\Vert_{L^2_\gamma(\mathbb R;H^{m+1}(\mathbb T))}\right.\\
&\qquad\qquad\qquad\quad \left.+\left(\Vert\varphi^a\Vert_{L^\infty(\mathbb R;H^{m+2}(\mathbb T))}+\Vert{\varphi}_{0}\Vert_{H^1_\gamma(-\infty,T;H^{m+2}(\mathbb T))}\right)\Vert{\widetilde{\varphi}}^\prime_{x}\Vert_{L^2_\gamma(\mathbb R;H^2(\mathbb T))}+\Vert  g\Vert_{L^2_\gamma(-\infty,T;H^{m}(\mathbb T))}\right\}\,,
%&\le C_1\left\{\left(1+\Vert\varphi^a\Vert_{L^\infty(\mathbb R;H^3(\mathbb T))}+\Vert{\varphi}_0\Vert_{X_2}\right)\left(\Vert g\Vert_{Y_2}\left(\Vert\varphi^a\Vert_{L^\infty(\mathbb R;H^{m+4}(\mathbb T))} + \Vert{\varphi}_0\Vert_{X_{m+3}}\right) + \Vert g\Vert_{Y_{m+1}}\right)\right.\\
%&\qquad\qquad\qquad\quad \left.+\left(\Vert\varphi^a\Vert_{L^\infty(\mathbb R;H^{m+2}(\mathbb T))}+\Vert{\varphi}_{0}\Vert_{X_{m+2}}\right)\left(\Vert g\Vert_{Y_2}\left(\Vert\varphi^a\Vert_{L^\infty(\mathbb R;H^{5}(\mathbb T))} + \Vert{\varphi}_0\Vert_{X_{4}}\right) + \Vert g\Vert_{Y_{2}}\right)+\Vert  g\Vert_{Y_2}\right.
\end{split}
\end{equation}
where $C_1$ only depends on $m$, $\delta$, $\mu$, see Proposition \ref{prop_stima_tame_2}. Now we use \eqref{stima_diff} together with the Sobolev imbedding and the definition of the spaces $X_m$ and $Y_m$ to bound, for fixed $\gamma$, $\Vert\widetilde{\varphi}^\prime_{x}\Vert^2_{L^2_\gamma(\mathbb R;H^{m+1}(\mathbb T))}$  and $\Vert\widetilde{\varphi}^\prime_{x}\Vert^2_{L^2_\gamma(\mathbb R;H^{2}(\mathbb T))}$ in the above inequality, finding
\begin{equation}\label{stima_diff_T_NM}
\begin{split}
 & \Vert\widetilde{\varphi}^\prime_{x}\Vert_{L^2_\gamma(\mathbb R;H^{m+1}(\mathbb T))}\\
&\le \widehat{C}_1\left\{\left(\Vert\varphi^a_x\Vert_{L^\infty(\mathbb R;H^{m+3}(\mathbb T))}+\Vert\varphi_{0,x}\Vert_{L^\infty(-\infty, T;H^{m+3}(\mathbb T))}\right)\Vert g\Vert_{L^2_\gamma(-\infty,T; H^2(\mathbb T))} +\Vert g\Vert_{L^2_\gamma(-\infty,T; H^{m+1}(\mathbb T))}\right\}\\
%&\le \widehat{C}_1\left\{\left(\Vert\varphi^a\Vert_{L^\infty(\mathbb R;H^{m+4}(\mathbb T))}+\Vert\varphi_{0}\Vert_{L^\infty(-\infty, T;H^{m+4}(\mathbb T))}\right)\Vert g\Vert_{Y_2} +\Vert g\Vert_{Y_{m+1}}\right\}\\
&\le \widehat{C}_1\left\{\left(\Vert\varphi^a\Vert_{L^\infty(\mathbb R;H^{m+4}(\mathbb T))}+\Vert\varphi_{0}\Vert_{H^1_\gamma(-\infty, T;H^{m+4}(\mathbb T))}\right)\Vert g\Vert_{Y_2} +\Vert g\Vert_{Y_{m+1}}\right\}\\
&\le \widehat{C}_1\left\{\left(\Vert\varphi^a\Vert_{L^\infty(\mathbb R;H^{m+4}(\mathbb T))}+\Vert\varphi_{0}\Vert_{X_{m+3}}\right)\Vert g\Vert_{Y_2} +\Vert g\Vert_{Y_{m+1}}\right\},
\end{split}
\end{equation}
which gives, for $m=1$
\begin{equation}\label{stima_diff_T_2_NM}
\Vert\widetilde{\varphi}^\prime_{x}\Vert_{L^2_\gamma(\mathbb R;H^{2}(\mathbb T))}\\
\le \widehat{C}_1\left\{\left(\Vert\varphi^a\Vert_{L^\infty(\mathbb R;H^{5}(\mathbb T))}+\Vert\varphi_{0}\Vert_{X_{4}}\right)\Vert g\Vert_{Y_2} +\Vert g\Vert_{Y_{2}}\right\}.
\end{equation}
The constant $\widehat{C}_1$ involved in \eqref{stima_diff_T_NM}, \eqref{stima_diff_T_2_NM} is the value defined in \eqref{Cgamma}.

Inserting  \eqref{stima_diff_T_NM} and \eqref{stima_diff_T_2_NM} in \eqref{stima_dm-1tildephitt} we get
\begin{equation}\label{stima_dm-phitt_fine}
\begin{split}
&\Vert\varphi^\prime_{tt}\Vert_{L^2_\gamma(-\infty,T;H^{m}(\mathbb T))}\le\\
&\le \widetilde{C}_1\bigg\{\big(1+\Vert\varphi^a\Vert_{L^\infty(\mathbb R;H^3(\mathbb T))}+\Vert{\varphi}_0\Vert_{X_2}\big)\bigg(\left(\Vert\varphi^a\Vert_{L^\infty(\mathbb R;H^{m+4}(\mathbb T))} + \Vert{\varphi}_0\Vert_{X_{m+3}}\right)\Vert g\Vert_{Y_2} + \Vert g\Vert_{Y_{m+1}}\bigg)\bigg.\\
&\quad\ \bigg.+\left(\Vert\varphi^a\Vert_{L^\infty(\mathbb R;H^{m+2}(\mathbb T))}+\Vert{\varphi}_{0}\Vert_{X_{m+1}}\right)\bigg(\left(\Vert\varphi^a\Vert_{L^\infty(\mathbb R;H^{5}(\mathbb T))} + \Vert{\varphi}_0\Vert_{X_{4}}\right)\Vert g\Vert_{Y_2} + \Vert g\Vert_{Y_{2}}\bigg)+\Vert  g\Vert_{Y_{m}}\bigg\}\,,
\end{split}
\end{equation}
with $\widetilde{C}_1=\widetilde{C}_1(m,\delta,\mu,\varphi^{(0)},\varphi^{(1)})$. Adding \eqref{stima_dm-phitt_fine} to \eqref{stima_diff_T}, and recalling \eqref{normaXm}, we get
\begin{equation}\label{stima_phiprimeXm}
\begin{split}
&\Vert\varphi^\prime\Vert_{X_m}\le\\
&\le C\bigg\{\left(1+\Vert\varphi^a\Vert_{L^\infty(\mathbb R;H^3(\mathbb T))}+\Vert{\varphi}_0\Vert_{X_2}\right)\bigg(\left(\Vert\varphi^a\Vert_{L^\infty(\mathbb R;H^{m+4}(\mathbb T))} + \Vert{\varphi}_0\Vert_{X_{m+3}}\right)\Vert g\Vert_{Y_2} + \Vert g\Vert_{Y_{m+1}}\bigg)\bigg.\\
&\quad \bigg.+\left(\Vert\varphi^a\Vert_{L^\infty(\mathbb R;H^{m+2}(\mathbb T))}+\Vert{\varphi}_{0}\Vert_{X_{m+1}}\right)\bigg(\left(\Vert\varphi^a\Vert_{L^\infty(\mathbb R;H^{5}(\mathbb T))} + \Vert{\varphi}_0\Vert_{X_{4}}\right)\Vert g\Vert_{Y_2} + \Vert g\Vert_{Y_{2}}\bigg)+\Vert  g\Vert_{Y_{m}}\bigg\}\,,
\end{split}
\end{equation}
with $C=C(m,\delta,\mu,\varphi^{(0)},\varphi^{(1)})$.

Now we observe that for $\varphi_0\in\mathcal U\cap X_\infty$ (see \eqref{neighbrhd}) in the above inequality the coefficients
\begin{equation*}
\left(1+\Vert\varphi^a\Vert_{L^\infty(\mathbb R;H^3(\mathbb T))}+\Vert{\varphi}_0\Vert_{X_2}\right)\,,\quad \left(\Vert\varphi^a\Vert_{L^\infty(\mathbb R;H^{5}(\mathbb T))} + \Vert{\varphi}_0\Vert_{X_{4}}\right)
\end{equation*}
can be bounded by some constant $C=C(\delta,\Vert\varphi^{(0)}\Vert_{H^5(\mathbb T)},\Vert\varphi^{(1)}\Vert_{H^4(\mathbb T)})$, see \eqref{CX.1}. Hence from \eqref{stima_phiprimeXm} we get
\begin{equation*}%\label{stima_phiprimeXm_fine1}
\Vert\varphi^\prime\Vert_{X_m}\le C\left\{\left(\Vert\varphi^a\Vert_{L^\infty(\mathbb R;H^{m+4}(\mathbb T))} + \Vert{\varphi}_0\Vert_{X_{m+3}}\right)\Vert g\Vert_{Y_2} + \Vert g\Vert_{Y_{m+1}}\right\}\,,
\end{equation*}
where $C=C(m,\mu,\delta,\varphi^{(0)},\varphi^{(1)})$. Using also $\Vert g\Vert_{Y_2}\leq\Vert g\Vert_{Y_{m+1}}$ and that (see \eqref{reg_phia})
\begin{equation*}
\Vert\varphi^a\Vert_{L^\infty(\mathbb R;H^{m+4}(\mathbb T))}\le C\left\{\Vert\varphi^{(0)}\Vert_{H^{m+4}(\mathbb T)}+\Vert\varphi^{(1)}\Vert_{H^{m+3}(\mathbb T)}\right\}
\end{equation*}
for all $m\geq 1$, we finally get
\begin{equation}\label{stima_phiprimeXm_fine2}
\Vert\varphi^\prime\Vert_{X_m}\le \tilde C\left\{\Vert{\varphi}_0\Vert_{X_{m+3}}\Vert g\Vert_{Y_2} + \Vert g\Vert_{Y_{m+1}}\right\}\,
\end{equation}
where now $\tilde C$ depends only on $m$, $\delta$, $\mu$ and the initial data $\varphi^{(0)}$, $\varphi^{(1)}$; in particular $\tilde C$ is bounded for $m$ bounded and sufficiently regular initial data.

The above estimate implies that Assumption 2.2 in \ref{sec_N-M} (see also  \cite[Assumption 2.2]{secchi-nash}) is satisfied with $m_0=4$, $s=1$ and $s^\prime=3$ (recall that \eqref{stima_phiprimeXm_fine2} requires $\varphi_0\in\mathcal U\cap X_\infty$, where $\mathcal U$ was defined in \eqref{neighbrhd}, and $\Vert g\Vert_{Y_2}\le\Vert g\Vert_{Y_4}$).

%%%%%%%%%%%%%%%%%%%%%%%%%%%%%%%%%%%%%%%%%%%%%%%%%%%%%%%%%%%%%%%%%%% NEW SECTION %%%%%%%%%%%%%%%%%%%%%%%%%%%%%%%%%%%%%%%
\subsubsection{Estimate for the second order differential of $\mathcal L$}\label{stima_diff_2}
We need to derive an estimate (see  Assumption 2.1 in \ref{sec_N-M}; see also \cite[Assumption 2.1]{secchi-nash}) for the second order derivative $\mathbb L^{\prime\prime}[\varphi^a+\varphi_0](\varphi^\prime, \psi^\prime)$ (see \eqref{diffL}).

From estimate \eqref{stima_der2_T} (see Remark \ref{rem_der2}), for  fixed $\gamma\ge\widehat{\gamma}_1$ we find
\begin{equation*}%\label{stima_2der_NM}
\begin{split}
\Vert\mathbb L^{\prime\prime}&[\varphi^a+\varphi_0](\varphi^\prime, \psi^\prime)\Vert_{L^2_{\gamma}(-\infty,T;H^m(\mathbb T))}\\
&\leq C_m\left\{\Vert\varphi^\prime_{x}\Vert_{L^2_\gamma(-\infty,T;H^{m+1}(\mathbb T))}\Vert\psi^\prime_x\Vert_{L^\infty(-\infty,T;H^2(\mathbb T))} + \Vert\psi^\prime_{x}\Vert_{L^2_\gamma(-\infty,T;H^{m+1}(\mathbb T))}\Vert\varphi^\prime_x\Vert_{L^\infty(-\infty,T;H^2(\mathbb T))}\right\}\\
%&\leq C_m\left\{\Vert\varphi^\prime_x\Vert_{L^2_\gamma(-\infty,T;H^{m+1}(\mathbb T))}\Vert\psi^\prime\Vert_{L^\infty(-\infty,T;H^3(\mathbb T))} + \Vert\psi^\prime_x\Vert_{L^2_\gamma(-\infty,T;H^{m+1}(\mathbb T))}\Vert\varphi^\prime\Vert_{L^\infty(-\infty,T;H^3(\mathbb T))}\right\}\\
&\leq C_m\left\{\Vert\varphi^\prime_x\Vert_{L^2_\gamma(-\infty,T;H^{m+1}(\mathbb T))}\Vert\psi^\prime\Vert_{H^1_\gamma(-\infty,T;H^3(\mathbb T))} + \Vert\psi^\prime_x\Vert_{L^2_\gamma(-\infty,T;H^{m+1}(\mathbb T))}\Vert\varphi^\prime\Vert_{H^1_\gamma(-\infty,T;H^3(\mathbb T))}\right\}\\
&\leq C_m\left\{\Vert\varphi^\prime\Vert_{X_m}\Vert\psi^\prime\Vert_{X_2} + \Vert\psi^\prime\Vert_{X_m}\Vert\varphi^\prime\Vert_{X_2}\right\}\,,
\end{split}
\end{equation*}
with the constant $C_m$ bounded for $m$ bounded (again we used the Sobolev imbedding $H^1_\gamma(-\infty,T;\mathcal X)\hookrightarrow L^\infty(-\infty,T;\mathcal X)$).

Estimate above is exactly Assumpion 2.1 in \ref{sec_N-M} (see also \cite[Assumption 2.1]{secchi-nash}) with $m_0\geq 2$, $r=0$, $r^\prime =0$.

We need that Assumptions 2.1 and 2.2 in \ref{sec_N-M} (see also \cite{secchi-nash}) are simultaneously satisfied; hence, from now on, we assume $m_0=4$.

%%%%%%%%%%%%%%%%%%%%%%%%%%%%%%%%%%%%%%%%%%%%%%%%%%%%%%%%%%%%%%% NEW SECTION %%%%%%%%%%%%%%%%%%%%%%%%%%%%%%%%%%%%%%%%%%
\subsection{Proof of Theorem \ref{nonlin_th}}\label{proof-theorem}
We are now in the position to verify that all the assumptions in \cite[Theorem 2.4]{secchi-nash}  are satisfied (see \ref{sec_N-M}, Theorem \ref{N-M_paolo}) in order to get that  the nonlinear equation \eqref{cp1_equiv} has a solution.

From the values of $r,r^\prime, s, s^\prime$ obtained in Sections \ref{stima_diff_1} and \ref{stima_diff_2}, we compute
\begin{equation*}
m^\prime=m_0+\max{\{r,r^\prime\}} + \max{\{s,s^\prime\}}=4+\max{\{0,0\}} + \max{\{1,3\}}=7.
\end{equation*}
By (i) in \cite[Theorem 2.4]{secchi-nash} (see \ref{sec_N-M}, Theorem \ref{N-M_paolo}), there exists $\varepsilon$ such that, if $F^a\in Y_{m^\prime+s+1}=Y_9$ with
\begin{equation}\label{norma-piccola-nostro-caso}
\Vert F^a\Vert_{Y_9}=\Vert F^a\Vert_{L^2_\gamma(-\infty,T; H^9(\mathbb T))}\leq \varepsilon\,,
\end{equation}
there exists a solution $\varphi^\prime \in X_{m^\prime}=X_7$ of \eqref{cp1_equiv}, provided that Assumptions 2.1.and 2.2 are satisfied. In view of Remark \ref{rmk:1}, the assumptions $\varphi^{(0)}\in H^{11}(\mathbb T)$ and $\varphi^{(1)}\in H^{10}(\mathbb T)$ imply $F^a\in Y_{9}$. On the other hand, the smallness assumption \eqref{norma-piccola-nostro-caso} is guaranteed if we take a positive $T$ sufficiently small, depending on the size of the initial data $\varphi^{(0)}\in H^{11}(\mathbb T)$ and $\varphi^{(1)}\in H^{10}(\mathbb T)$, see again Remark \ref{rmk:2}. In view of \eqref{reg_phia} and \eqref{Xm}, the function $\varphi=\varphi^a_{\vert\,[0,T]}+\varphi^\prime_{\vert [0,T]}$ provides a solution of \eqref{cp} with the regularity required. The uniqueness  follows from standard arguments, based on the regularity of the solution.  This ends the proof of the statement (1).

Moreover, by (ii) in \cite[Theorem 2.4]{secchi-nash} (see also \ref{sec_N-M}, Theorem \ref{N-M_paolo}), if \eqref{norma-piccola-nostro-caso} holds and if in addition $F^a\in Y_{m^{\prime\prime}+s+1}=Y_{m^{\prime\prime}+2}$ with $m^{\prime\prime}>m^\prime=7$, then the solution $\varphi^\prime$ of \eqref{cp1_equiv} belongs to $X_{m^{\prime\prime}}$, again under the condition that Assumptions 2.1.and 2.2 are satisfied.

Observe now that if $\varphi^{(0)}\in H^{\nu+1}(\mathbb T)$ and $\varphi^{(1)}\in H^\nu(\mathbb T)$, in view of Remark \ref{rmk:1}, we know that $F^a\in Y_{\nu-1}$. Hence, in order to satisfy the above case (ii) of \cite[Theorem 2.4]{secchi-nash}, we need to require that
\begin{equation*}
\nu-1=m^{\prime\prime}+2>9
\end{equation*}
that is ensured by the assumption $\nu>10$. For such $\nu>10$, Nash-Moser's theorem implies that the solution of \eqref{cp1_equiv} belongs to $X_{m^{\prime\prime}}=X_{\nu-3}$, which yields for the solution $\varphi=\varphi^a_{\vert\,[0,T]}+\varphi^\prime_{\vert [0,T]}$ of \eqref{cp} the required regularity. Notice in particular that the final time $T$ is just defined from the requirement \eqref{norma-piccola-nostro-caso}, where the norm of $F^a$ in $L^2_\gamma(-\infty,T; H^{9}(\mathbb T))$ is involved; because of Remark \ref{rmk:2} the time $T$ may be chosen to depend on the norms of the data $\varphi^{(0)}$, $\varphi^{(1)}$ respectively in $H^{11}(\mathbb T)$ and $H^{10}(\mathbb T)$, in spite of the augmented regularity $\varphi^{(0)}\in H^{\nu+1}(\mathbb T)$, $\varphi^{(1)}\in H^{\nu}(\mathbb T)$.

%%%%%%%%%%%%%%%%%%%%%%%%%%%%%%%%%%%%%%%%%%%%%%%%%%%%%%%%%%%%%%%%%%%%%%%%%%%%%%%%%%%%%%%%%%%%%%%%%%%%%%%%%%%%%%%%%%%%%%%%%%%%%%%%%%%%%%%%%%%%%%%%%%%%%%%%
\appendix
\section{Some commutator and product estimates}\label{stima_commutatore}
\begin{lemma}\label{lemma_comm}
For $s>1/2$ there exists a constant $C_s>0$ such that
\begin{eqnarray}
\Vert \left[\mathbb H\,;\,v\right]f\Vert_{L^2(\mathbb T)}\le C_s\Vert v\Vert_{H^s(\mathbb T)}\Vert f\Vert_{L^2(\mathbb T)}\,,\quad\forall\,v\in H^s(\mathbb T)\,,\,\,\forall\,f\in L^2(\mathbb T)\,;\label{stima_comm_1}\\
\Vert \left[\mathbb H\,;\,v\right]f_x\Vert_{L^2(\mathbb T)}\le C_s\Vert v_x\Vert_{H^s(\mathbb T)}\Vert f\Vert_{L^2(\mathbb T)}\,,\quad\forall\,v\in H^{s+1}(\mathbb T)\,,\,\,\forall\,f\in L^2(\mathbb T)\,;\label{stima_comm_2}\\
\left\Vert \left(\left[\mathbb H\,;\,\left[\mathbb H\,;\,v\right]\right]f_x\right)_x\right\Vert_{L^2(\mathbb T)}\le C_s\Vert v_{xx}\Vert_{H^s(\mathbb T)}\Vert f\Vert_{L^2(\mathbb T)}\,,\quad\forall\,v\in H^{s+2}(\mathbb T)\,,\,\,\forall\,f\in L^2(\mathbb T)\,,\label{stima_comm_3}
\end{eqnarray}
where $\left[\mathbb H\,;\,v\right]$ is the commutator between the Hilbert transform $\mathbb H$ and the multiplication by $v$.
\end{lemma}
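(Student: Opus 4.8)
The plan is to prove all three inequalities simultaneously, after passing to Fourier coefficients, by reducing each of them to one and the same bilinear estimate on the frequency side. The starting point is the elementary arithmetic fact that for $k,j\in\mathbb Z$ with $kj\le0$ one has $|k|\le|k-j|$ and $|j|\le|k-j|$ (if $kj<0$ then $|k-j|=|k|+|j|$; the cases $k=0$ or $j=0$ are trivial), hence also $1+|k|\le 1+|k-j|$ and $1+|j|\le 1+|k-j|$. This is exactly what makes commutators with $\mathbb H$ tame: writing $[\mathbb H;v]g:=\mathbb H(vg)-v\,\mathbb H g$ and using $\widehat{(vg)}(k)=\tfrac1{2\pi}\sum_j\widehat v(k-j)\widehat g(j)$ together with \eqref{hilbert1}, one finds
\begin{equation*}
\widehat{[\mathbb H;v]g}(k)=-\frac{i}{2\pi}\sum_{j\in\mathbb Z}\bigl(\mathrm{sgn}\,k-\mathrm{sgn}\,j\bigr)\,\widehat v(k-j)\,\widehat g(j)\,,
\end{equation*}
and the symbol $\mathrm{sgn}\,k-\mathrm{sgn}\,j$ is bounded by $2$ and vanishes unless $kj\le0$. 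Iterating once more, a short computation gives
\begin{equation*}
\widehat{[\mathbb H;[\mathbb H;v]]g}(k)=-\frac{1}{2\pi}\sum_{j\in\mathbb Z}\bigl(\mathrm{sgn}\,k-\mathrm{sgn}\,j\bigr)^2\,\widehat v(k-j)\,\widehat g(j)\,,
\end{equation*}
whose symbol is again bounded and supported in $\{kj\le0\}$.

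Next I would isolate a master bilinear lemma: if $\beta=\{\beta(k,j)\}$ is bounded, $|\beta(k,j)|\le M$, with $\beta(k,j)=0$ whenever $kj>0$, if $a,b\in\mathbb N$, $s>1/2$, and $w$ is such that $\partial_x^{a+b}w\in H^s(\mathbb T)$, then the bilinear expression $B[w,f]$ determined by $\widehat{B[w,f]}(k):=\sum_j\beta(k,j)\,k^a j^b\,\widehat w(k-j)\,\widehat f(j)$ satisfies $\Vert B[w,f]\Vert_{L^2(\mathbb T)}\le C_s M\,\Vert\partial_x^{a+b}w\Vert_{H^s(\mathbb T)}\Vert f\Vert_{L^2(\mathbb T)}$. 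The proof is short: on the support of $\beta$ the arithmetic fact gives $|k^a j^b\widehat w(k-j)|\le|k-j|^{a+b}|\widehat w(k-j)|=|\widehat{\partial_x^{a+b}w}(k-j)|$; still on the support, $1+|k-j|\ge 1+|j|$, so $(1+|k-j|)^{-s}\le(1+|j|)^{-s}$ and therefore
\begin{equation*}
\bigl|\widehat{B[w,f]}(k)\bigr|\le M\sum_{j\in\mathbb Z}\alpha(k-j)\,\eta(j)\,,\qquad \alpha(m):=(1+|m|)^s\bigl|\widehat{\partial_x^{a+b}w}(m)\bigr|\,,\quad \eta(j):=(1+|j|)^{-s}\bigl|\widehat f(j)\bigr|\,.
\end{equation*}
Young's inequality $\Vert\alpha*\eta\Vert_{\ell^2}\le\Vert\alpha\Vert_{\ell^2}\Vert\eta\Vert_{\ell^1}$, with $\Vert\alpha\Vert_{\ell^2}=\sqrt{2\pi}\,\Vert\partial_x^{a+b}w\Vert_{H^s}$ and, by Cauchy--Schwarz, $\Vert\eta\Vert_{\ell^1}\le\bigl(\sum_j(1+|j|)^{-2s}\bigr)^{1/2}\Vert\widehat f\Vert_{\ell^2}\le C_s\Vert f\Vert_{L^2}$ — the series converging precisely because $2s>1$ — then finishes the estimate. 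This is the only place where $s>1/2$ is used.

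Finally I would read off the three claims by choosing $\beta,a,b$ suitably. For \eqref{stima_comm_1} take $\beta(k,j)=-\tfrac{i}{2\pi}(\mathrm{sgn}\,k-\mathrm{sgn}\,j)$, $a=b=0$, $w=v$. For \eqref{stima_comm_2}, since $\widehat{f_x}(j)=ij\widehat f(j)$, the Fourier coefficients of $[\mathbb H;v]f_x$ are $\tfrac1{2\pi}\sum_j(\mathrm{sgn}\,k-\mathrm{sgn}\,j)\,j\,\widehat v(k-j)\widehat f(j)$, so the lemma applies with the same $\beta$ (up to the harmless factor $i$), $a=0$, $b=1$, $w=v$, giving the bound in terms of $\Vert v_x\Vert_{H^s}$. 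For \eqref{stima_comm_3}, combining the iterated-commutator formula above (with $g=f_x$) with the outer $x$-derivative produces Fourier coefficients $\tfrac1{2\pi}\sum_j(\mathrm{sgn}\,k-\mathrm{sgn}\,j)^2\,kj\,\widehat v(k-j)\widehat f(j)$ — one factor $k$ from $\partial_x$ outside, one factor $j$ from $f_x$ inside — so the lemma applies with $\beta(k,j)=\tfrac1{2\pi}(\mathrm{sgn}\,k-\mathrm{sgn}\,j)^2$, $a=b=1$, $w=v$, giving the bound in terms of $\Vert v_{xx}\Vert_{H^s}$; the hypotheses $v\in H^{s+1}$ and $v\in H^{s+2}$ are exactly what make $\partial_x^{a+b}w$ lie in $H^s$ in these two cases. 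I do not expect a genuine analytic obstacle; the only care needed is the bookkeeping of signs and of the $2\pi$-constants in the iterated-commutator computation feeding \eqref{stima_comm_3}. The conceptual content is uniform: in any commutator with $\mathbb H$ the interacting frequencies $k$ and $j$ have opposite sign, hence are each dominated by $k-j$, so every $x$-derivative falling on $f$ can be transferred onto $v$ at no cost.
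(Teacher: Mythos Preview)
Your proof is correct and follows essentially the same approach as the paper: compute Fourier coefficients, exploit that the commutator symbol is supported where $kj\le 0$ so that $|k|,|j|\le|k-j|$ and derivatives can be transferred onto $v$, then finish with Young's convolution inequality combined with Cauchy--Schwarz using the summability of $(1+|j|)^{-2s}$ for $s>1/2$. The only cosmetic differences are that you package all three estimates into a single bilinear lemma and apply Young as $\ell^2\ast\ell^1\to\ell^2$ (putting the weight on the $f$-factor), whereas the paper applies it as $\ell^1\ast\ell^2\to\ell^2$ (putting the $\ell^1$ norm on $\widehat{v_{xx}}$ via $\sum_k|\widehat{v_{xx}}(k)|\le c_s\|v_{xx}\|_{H^s}$); the two are equivalent.
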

\begin{proof}
We show here only the estimate \eqref{stima_comm_3}, the proof of estimates \eqref{stima_comm_1}, \eqref{stima_comm_2} following from similar arguments. Let us first recall that, whenever $f$ and $g$ are sufficiently smooth periodic functions on $\mathbb T$, there holds
\begin{equation}\label{convoluzione}
\widehat{f\cdot g}=\frac1{2\pi}\widehat{f}\ast\widehat{g}\,,\quad\forall\,k\in\mathbb Z\,,
\end{equation}
where $\widehat f\ast\widehat g$ is the discrete convolution of the sequences $\widehat f:=\left\{\widehat f(k)\right\}_{k\in\mathbb Z}$ and $\widehat g:=\left\{\widehat g(k)\right\}_{k\in\mathbb Z}$ defined by
\begin{equation}\label{conv}
\widehat f\ast\widehat g(k):=\sum\limits_{\ell}\widehat{f}(k-\ell)\widehat{g}(\ell)\,,\qquad\forall\,k\in\mathbb Z\,.
\end{equation}

Making use of \eqref{convoluzione}, \eqref{conv} for $k\neq 0$ we compute
\begin{align*}\label{stima_comm_3.1}
\big(\left(\left[\mathbb H\,;\,\left[\mathbb H\,;\,v\right]\right]f_x\right)_x\big)^{\wedge}&(k)=ik\big(\left[\mathbb H\,;\,\left[\mathbb H\,;\,v\right]\right]f_x\big)^{\wedge}(k)=ik\left\{\big(\mathbb H[\left[\mathbb H\,;\,v\right]f_x]\big)^{\wedge}(k)-\big(\left[\mathbb H\,;\,v\right]\mathbb H[f_x]\big)^{\wedge}(k)\right\}\\
&=ik\left\{-i\,{\rm sgn}\,k\,\big(\left[\mathbb H\,;\,v\right]f_x\big)^\wedge(k)+\big(v\,\mathbb H^2[f_x]\big)^\wedge(k)-\big(\mathbb H[v\,\mathbb H[f_x]]\big)^\wedge(k)\right\}\\
&=ik\left\{-i\,{\rm sgn}\,k\,\left(\widehat{\mathbb H[v\,f_x]}(k)-\widehat{v\,\mathbb H[f_x]}(k)\right)-\widehat{v\,f_x}(k)+i\,{\rm sgn}\,k\,\widehat{v\,\mathbb H[f_x]}(k)\right\}\\
&=ik\left\{-i\,{\rm sgn}\,k\,\left(-i\,{\rm sgn}\,k\,\widehat{v\,f_x}(k)-\frac1{2\pi}\sum\limits_{\ell}\widehat{v}(k-\ell)\widehat{\mathbb H[f_x]}(\ell)\right)-\widehat{v\cdot f_x}(k)\right.\\
&\quad\left.+\frac1{2\pi}\,i\,{\rm sgn}\,k\,\sum\limits_{\ell}\widehat{v}(k-\ell)\widehat{\mathbb H[f_x]}(\ell)\right\}\\
&=ik\left\{-({\rm sgn}\,k)^2\,\widehat{v\, f_x}(k)+\frac1{2\pi}i\,{\rm sgn}\,k\,\sum\limits_{\ell}\widehat{v}(k-\ell)\widehat{\mathbb H[f_x]}(\ell)-\widehat{v\, f_x}(k)\right.\\
&\quad\left. +\frac1{2\pi}\,i\,{\rm sgn}\,k\,\sum\limits_{\ell}\widehat{v}(k-\ell)\widehat{\mathbb H[f_x]}(\ell)\right\}\\
&=ik\left\{-2\widehat{v\, f_x}(k)+\frac1{\pi}i\,{\rm sgn}\,k\,\sum\limits_{\ell}\widehat{v}(k-\ell)\widehat{\mathbb H[f_x]}(\ell)\right\}\\
&=ik\left\{\frac1{\pi}i\,{\rm sgn}\,k\,\sum\limits_{\ell}\widehat{v}(k-\ell)\widehat{\mathbb H[f_x]}(\ell)-\frac1{\pi}\sum\limits_{\ell}\widehat{v}(k-\ell)\widehat{f_x}(\ell)\right\}\\
&=ik\left\{\frac1{\pi}i\,{\rm sgn}\,k\,\sum\limits_{\ell}\widehat{v}(k-\ell)\,\ell\,{\rm sgn}\,\ell\,\widehat{f}(\ell)-\frac1{\pi}\sum\limits_{\ell}\widehat{v}(k-\ell)i\,\ell\,\widehat{f}(\ell)\right\}\\
&=\frac1{2\pi}\sum\limits_{\ell}\Lambda_1(k,\ell)\,\widehat{v}(k-\ell)\widehat{f}(\ell)\,,
\end{align*}
where we also used $({\rm sgn}\,k)^2=1$, $k\,{\rm sgn}\,k=|k|$ and we have set
\begin{equation*}
\Lambda_1(k,\ell)=2(k\ell-|k|\,|\ell|)\,.
\end{equation*}
Observing that $\Lambda_1(k,\ell)=0$ for $k\ell\ge 0$ and using the numerical inequality $-2k\ell\le (k-\ell)^2$ we get
\begin{equation*}%\label{stima_comm_3.2}
\begin{array}{ll}
\displaystyle\left\vert\big(\left(\left[\mathbb H\,;\,\left[\mathbb H\,;\,v\right]\right]f_x\right)_x\big)^\wedge(k)\right\vert\le \frac1{2\pi}\sum\limits_{\ell\,:\,\,k\ell< 0}(-4k\ell)\,\vert\widehat{v}(k-\ell)\vert\,\vert\widehat{f}(\ell)\vert\le \frac1{\pi}\sum\limits_{\ell}(k-\ell)^2\,\vert\widehat{v}(k-\ell)\vert\,\vert\widehat{f}(\ell)\vert\\
\displaystyle\quad =\frac1{\pi}\sum\limits_{\ell}\,\vert (i(k-\ell))^2\widehat{v}(k-\ell)\vert\,\vert\widehat{f}(\ell)\vert=\frac1{\pi}\sum\limits_{\ell}\,\vert \widehat{v_{xx}}(k-\ell)\vert\,\vert\widehat{f}(\ell)\vert=\frac1{\pi}\left(\vert\widehat{v_{xx}}\vert\ast\vert\widehat{f}\vert\right)(k)\,.
\end{array}
\end{equation*}
Because of Parseval's identity and using Young's inequality we derive
\begin{align*}%\label{stima_comm_1.3}
\Vert &\left(\left[\mathbb H\,;\,\left[\mathbb H\,;\,v\right]\right]f_x\right)_x\Vert^2_{L^2(\mathbb T)}=\frac1{2\pi}\sum\limits_{k\in\mathbb Z}\left\vert \big(\left(\left[\mathbb H\,;\,\left[\mathbb H\,;\,v\right]\right]f_x\right)_x\big)^\wedge(k)\right\vert^2\le\frac{1}{2\pi^2}\sum\limits_{k\in\mathbb Z}\left\vert\left(\vert\widehat v_{xx}\vert\ast\vert\widehat f\vert\right)(k)\right\vert^2\\
&\le \frac{1}{2\pi^2}\left(\sum\limits_{k\in\mathbb Z}\vert\widehat v_{xx}(k)\vert\right)^2\sum\limits_{k\in\mathbb Z}\vert\widehat{f}(k)\vert^2=\frac{1}{\pi}\left(\sum\limits_{k\in\mathbb Z}\vert\widehat v_{xx}(k)\vert\right)^2\Vert f\Vert^2_{L^2(\mathbb T)}\,.
\end{align*}
Estimate \eqref{stima_comm_3} follows at once from the above, since for $s>1/2$
\begin{equation}\label{imm_sobolev}
\sum\limits_{k\in\mathbb Z}\vert\widehat v_{xx}(k)\vert\le c_s\Vert v_{xx}\Vert_{H^s(\mathbb T)}
\end{equation}
holds true with some positive constant $c_s$, depending only on $s$.
\end{proof}
\begin{lemma}\label{lemma_comm_paolo}
For every integer $m\ge 1$ there exists a constant $C_m>0$ such that for all functions $v\in H^m(\mathbb T)$ and $f\in H^1(\mathbb T)$
\begin{equation*}%\label{stima_comm_p}
\Vert \left[\mathbb H\,;\,v\right]f\Vert_{H^m(\mathbb T)}\le C_m\Vert \partial^m_x v\Vert_{L^{2}(\mathbb T)}\Vert f\Vert_{H^1(\mathbb T)}\,.
\end{equation*}
\end{lemma}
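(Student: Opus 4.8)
The plan is to work on the Fourier side, in the same spirit as the proof of Lemma~\ref{lemma_comm}. First I would reduce to the case where both $v$ and $f$ have zero spatial mean. Writing $v=v_0+\bar v$ and $f=f_0+\bar f$ with $v_0,f_0$ of zero mean and $\bar v,\bar f\in\mathbb C$ the constant means, one has $[\mathbb H\,;\,\bar v]f=0$ (the commutator annihilates multiplication by a constant) and $[\mathbb H\,;\,v_0]\bar f=-\bar f\,\mathbb H[v_0]$, whose $H^m(\mathbb T)$-norm is controlled by $|\bar f|\,\Vert v_0\Vert_{H^m(\mathbb T)}\le C\Vert f\Vert_{L^2(\mathbb T)}\Vert\partial_x^m v\Vert_{L^2(\mathbb T)}$, using the $L^2$-continuity of $\mathbb H$ (see \eqref{stima_hilbert}), the identity \eqref{ident_norme} for zero-mean functions (so that $\Vert v_0\Vert_{H^m(\mathbb T)}\le 2^m\Vert\partial_x^m v_0\Vert_{L^2(\mathbb T)}=2^m\Vert\partial_x^m v\Vert_{L^2(\mathbb T)}$), and $|\bar f|\le C\Vert f\Vert_{L^2(\mathbb T)}\le C\Vert f\Vert_{H^1(\mathbb T)}$. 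Since moreover $\Vert\partial_x^m v_0\Vert_{L^2}=\Vert\partial_x^m v\Vert_{L^2}$ and $\Vert f_0\Vert_{H^1}\le\Vert f\Vert_{H^1}$, it remains to bound $[\mathbb H\,;\,v_0]f_0$, and I may rename $v_0,f_0$ as $v,f$, assuming henceforth $\widehat v(0)=\widehat f(0)=0$.

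Next, as in the computation leading to $\Lambda_1$ in the proof of Lemma~\ref{lemma_comm}, I would write the Fourier coefficients of the commutator explicitly: using \eqref{convoluzione}, \eqref{conv} and \eqref{hilbert1} one gets, for every $k$,
\[
\widehat{[\mathbb H\,;\,v]f}(k)=\frac{-i}{2\pi}\sum_{\ell}\bigl({\rm sgn}\,\ell-{\rm sgn}\,k\bigr)\,\widehat v(k-\ell)\,\widehat f(\ell)\,.
\]
Because $\widehat v(0)=\widehat f(0)=0$, only indices with $k\neq0$ and $\ell\neq0$ contribute, and the factor ${\rm sgn}\,\ell-{\rm sgn}\,k$ vanishes unless $k$ and $\ell$ have opposite signs, in which case it has modulus $2$ and, crucially, $|k-\ell|=|k|+|\ell|$. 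This last algebraic fact is the heart of the matter: on the support of the sum one has simultaneously $|k|\le|k-\ell|$ and $|\ell|\ge1$, hence $(1+|k|)^m\le(1+|k-\ell|)^m\le 2^m|k-\ell|^m$.

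Then I would estimate, for $k\neq0$,
\[
(1+|k|)^m\bigl|\widehat{[\mathbb H\,;\,v]f}(k)\bigr|\le\frac{2^m}{\pi}\sum_{\ell}|k-\ell|^m|\widehat v(k-\ell)|\,|\widehat f(\ell)|=\frac{2^m}{\pi}\bigl(|\widehat{\partial_x^m v}|\ast|\widehat f|\bigr)(k)\,,
\]
where $|\widehat{\partial_x^m v}|(k)=|k|^m|\widehat v(k)|$. Squaring, summing over $k$, applying the discrete Young inequality $\Vert a\ast b\Vert_{\ell^2}\le\Vert a\Vert_{\ell^2}\Vert b\Vert_{\ell^1}$ (as used in Lemma~\ref{lemma_comm}), and invoking Parseval's identity together with the Sobolev-type bound $\sum_{\ell}|\widehat f(\ell)|\le c\,\Vert f\Vert_{H^1(\mathbb T)}$ (the analogue of \eqref{imm_sobolev}, valid since $1>1/2$), I would obtain $\Vert[\mathbb H\,;\,v]f\Vert_{H^m(\mathbb T)}\le C_m\Vert\partial_x^m v\Vert_{L^2(\mathbb T)}\Vert f\Vert_{H^1(\mathbb T)}$, which together with the first paragraph yields the claim. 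The only genuinely delicate point is the sign observation $|k-\ell|=|k|+|\ell|$ on the support of the kernel — exactly the smoothing property of $[\mathbb H\,;\,v]$ — which lets me place all $m$ derivatives on $v$ at the price of only an $\ell^1$ (hence $H^{1/2+\varepsilon}\supset H^1$) control of $\widehat f$; everything else is Leibniz-free bookkeeping with the same tools already employed in Lemma~\ref{lemma_comm}. (Note that a naive Leibniz expansion $\partial_x^m([\mathbb H\,;\,v]f)=\sum_j\binom{m}{j}[\mathbb H\,;\,\partial_x^j v]\partial_x^{m-j}f$ fails here, since the terms with $j$ close to $m$ would require $v$ to be more regular than $H^m(\mathbb T)$.)
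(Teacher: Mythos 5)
Your proposal is correct and follows essentially the same Fourier-side route as the paper's proof: compute the kernel $\Lambda(k,\ell)=i(\operatorname{sgn}\ell-\operatorname{sgn} k)$, observe it vanishes when $k$ and $\ell$ have the same sign so that $|k|\le|k-\ell|$ on the support, convert to a convolution of $|\widehat{\partial_x^m v}|$ against $|\widehat f|$, and close with discrete Young and Parseval. Your preliminary reduction to the zero-mean case is a small streamlining that lets you avoid the paper's explicit $k=0,\,k>0,\,k<0$ case split, and the remark about the failure of a naive Leibniz expansion is apt. One small slip worth flagging: you assert that ``only indices with $k\neq0$ contribute'' after the zero-mean reduction, which is false --- $\widehat{[\mathbb H;v]f}(0)$ is generically nonzero even when $\widehat v(0)=\widehat f(0)=0$, since for $k=0$ the factor $\operatorname{sgn}\ell-\operatorname{sgn}0=\operatorname{sgn}\ell$ does not vanish. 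Fortunately this does no harm: for $k=0$ and $\ell\neq0$ one still has $|\operatorname{sgn}\ell-\operatorname{sgn}k|\le2$ and $(1+|k|)^m=1\le 2^m|k-\ell|^m$ (because $|k-\ell|=|\ell|\ge1$), so the displayed bound $(1+|k|)^m\bigl|\widehat{[\mathbb H;v]f}(k)\bigr|\le\tfrac{2^m}{\pi}\bigl(|\widehat{\partial_x^m v}|\ast|\widehat f|\bigr)(k)$ in fact holds for all $k$, and the rest of the argument goes through. Also, a trivial sign issue: $[\mathbb H;v_0]\bar f=+\bar f\,\mathbb H[v_0]$, not $-\bar f\,\mathbb H[v_0]$, though the norm estimate is of course unchanged.
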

\begin{proof}
In view of \eqref{hilbert1} we compute
\begin{equation*}%\label{stima_comm_1.1}
\begin{array}{ll}
\displaystyle\widehat{\left[\mathbb H\,;\,v\right]f}(k)=\widehat{\mathbb H[vf]}(k)-\widehat{v\mathbb H[f]}(k)=-i\,{\rm sgn}\,k\,\widehat{vf}(k)-\frac1{2\pi}\sum\limits_{\ell}\widehat{v}(k-\ell)\widehat{\mathbb H[f]}(\ell)\\
\displaystyle\quad =-i\,{\rm sgn}\,k\,\frac1{2\pi}\sum\limits_{\ell}\widehat{v}(k-\ell)\widehat{f}(\ell)+\frac1{2\pi}\sum\limits_{\ell}i\,{\rm sgn}\,\ell\,\widehat{v}(k-\ell)\widehat{f}(\ell)\\
\displaystyle\quad =\frac1{2\pi}\sum\limits_{\ell}\Lambda(k,\ell)\,\widehat{v}(k-\ell)\widehat{f}(\ell)\,,
\end{array}
\end{equation*}
where
\begin{equation}\label{valori_lambda}
\Lambda(k,\ell):=i\,\left(-{\rm sgn}\,k+{\rm sgn}\,\ell\right)=
\begin{cases}
-2i\,,\qquad\mbox{if}\,\,k>0\,\,\mbox{and}\,\,\ell<0\,,\\
\mp i\,,\qquad\mbox{if}\,\,\pm k>0\,\,\mbox{and}\,\,\ell=0\,,\\
0\,,\qquad\mbox{if}\,\,k\ell>0\,,
\end{cases}
\end{equation}
and
\begin{equation}\label{simmetria_lambda}
\Lambda(\ell,k)=-\Lambda(k,\ell)\,,\quad\forall\,(k,\ell)\in\mathbb Z^2\,.
\end{equation}
In view of \eqref{valori_lambda} and \eqref{simmetria_lambda} we have more explicitly that
\begin{equation}\label{casi}
\widehat{\left[\mathbb H\,;\,v\right]f}(k)=\begin{cases}\displaystyle\frac{-i}{2\pi}\widehat{v}(k)\widehat{f}(0)-\frac{i}{\pi}\sum\limits_{\ell<0}\widehat{v}(k-\ell)\widehat{f}(\ell)\,,\quad\mbox{if}\,\,k>0\,,\\
\displaystyle-\frac{i}{2\pi}\sum\limits_{\ell}{\rm sgn}\,\ell\,\widehat{v}(-\ell)\widehat{f}(\ell)\,,\quad\mbox{if}\,\,k=0\,,\\
\displaystyle\frac{i}{2\pi}\widehat{v}(k)\widehat{f}(0)+\frac{i}{\pi}\sum\limits_{\ell>0}\widehat{v}(k-\ell)\widehat{f}(\ell)\,,\quad\mbox{if}\,\,k<0\,.
\end{cases}
\end{equation}
On the other hand, in view of \eqref{normaHs}
\begin{equation}\label{stima_paolo_1}
\begin{split}
\Vert &\left[\mathbb H\,;\,v\right]f\Vert^2_{H^m(\mathbb T)}=\frac1{2\pi}\sum\limits_{k\in \mathbb Z}(1+|k|)^{2m}\vert\widehat{\left[\mathbb H\,;\,v\right]f}(k)\vert^2\\
&\le C_m\left\{\vert\widehat{\left[\mathbb H\,;\,v\right]f}(0)\vert^2+\sum\limits_{k\in \mathbb Z\setminus\{0\}}|k|^{2m}\vert\widehat{\left[\mathbb H\,;\,v\right]f}(k)\vert^2\right\}\,.
\end{split}
\end{equation}
From \eqref{casi} we get for $k=0$:
\begin{equation}\label{coeff_0}
\vert \widehat{\left[\mathbb H\,;\,v\right]f}(0)\vert\le\frac{1}{2\pi}\sum\limits_{\ell\neq 0}\vert\widehat{v}(-\ell)\vert\,\vert\widehat{f}(\ell)\vert\le \frac{1}{2\pi}\sum\limits_{\ell\neq 0}\vert -i\ell\vert^m \vert\widehat{v}(-\ell)\vert\vert\widehat{f}(\ell)\vert\le\frac1{2\pi}\left(\vert\widehat{\partial_x^m v}\vert\ast\vert \widehat f\vert\right)(0)\,.
\end{equation}
For $k>0$ we obtain
\begin{equation}\label{coeff_k+}
\begin{split}
\vert k\vert^m\,\vert & \widehat{\left[\mathbb H\,;\,v\right]f}(k)\vert\le \frac{1}{2\pi}\vert k\vert^m \vert\widehat{v}(k)\vert\,\vert\widehat{f}(0)\vert+\frac{1}{\pi}\sum\limits_{\ell<0}\vert ik\vert^m\,\vert\widehat{v}(k-\ell)\vert\,\vert\widehat{f}(\ell)\vert\\
&\le \frac{1}{2\pi}\vert(ik)^m\widehat{v}(k)\vert\,\vert\widehat{f}(0)\vert+\frac{1}{\pi}\sum\limits_{\ell<0}\vert i(k-\ell)\vert^m\,\vert\widehat{v}(k-\ell)\vert\,\vert\widehat{f}(\ell)\vert\\
&\le\frac1{\pi}\sum\limits_{\ell\le 0}\vert\widehat{\partial^m_x v}(k-\ell)\vert\,\vert\widehat{f}(\ell)\vert\le\frac1{\pi}(\vert\widehat{\partial^m_x v}\vert\ast\vert\widehat{f}\vert)(k)\,,
\end{split}
\end{equation}
where we used that $\vert k\vert<\vert k-\ell\vert$ for $k>0$ and $\ell<0$. The same estimate as above can be extended to the coefficients $\vert\widehat{\left[\mathbb H\,;\,v\right]f}(k)\vert$, for $k<0$, by repeating the same arguments and since the inequality $\vert k\vert<\vert k-\ell\vert$ is still true for $k<0$ and $\ell>0$.

Using \eqref{coeff_0}, \eqref{coeff_k+} to estimate the right-hand side of \eqref{stima_paolo_1}, by Young's inequality and \eqref{imm_sobolev} (with $f$ instead of $v_{xx}$ and $s=1$) we obtain
\begin{equation*}\label{stima_paolo_2}
\begin{split}
\Vert &\left[\mathbb H\,;\,v\right]f\Vert^2_{H^m(\mathbb T)}\le C_m\sum\limits_{k\in \mathbb Z}\left\vert\left(\vert\widehat{\partial^m_x v}\vert\ast\vert\widehat{f}\vert\right)(k)\right\vert^2\le C_m\left(\sum\limits_{k\in \mathbb Z}\vert\widehat{\partial^m_x v}(k)\vert^2\right)\left(\sum\limits_{k\in \mathbb Z}\vert\widehat{f}(k)\vert\right)^2\\
&\le C_m\Vert\partial^m_x v\Vert^2_{L^2(\mathbb T)}\Vert f\Vert^2_{H^1(\mathbb T)}\,.
\end{split}
\end{equation*}
\end{proof}

Following the same arguments as in the case of Lemma \ref{lemma_comm_paolo}, we may prove the following more general result.
\begin{lemma}\label{lemma_comm_ale_2}
For all integers $m, p\ge 1$ there exists a constant $C_{m,p}>0$ such that for all functions $v\in H^{m+p}(\mathbb T)$ and $f\in H^1(\mathbb T)$
\begin{equation*}%\label{stima_comm_p}
\Vert \left[\mathbb H\,;\,v\right]\partial^p_x f\Vert_{H^m(\mathbb T)}\le C_{m,p}\Vert \partial^{m+p}_x v\Vert_{L^{2}(\mathbb T)}\Vert f\Vert_{H^1(\mathbb T)}\,.
\end{equation*}
\end{lemma}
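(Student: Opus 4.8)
The plan is to adapt verbatim the Fourier-side argument used for Lemma~\ref{lemma_comm_paolo} (which is essentially the case $p=0$ of the present statement). First I would compute, for $v$ and $f$ smooth periodic functions and any $k\in\mathbb Z$, the Fourier coefficient of $[\mathbb H\,;\,v]\partial_x^pf$, expanding the commutator and using the convolution formulas \eqref{convoluzione}, \eqref{conv}, the identity $\widehat{\partial_x^pf}(\ell)=(i\ell)^p\widehat f(\ell)$ and \eqref{hilbert1}. This produces
\begin{equation*}
\widehat{[\mathbb H\,;\,v]\partial_x^pf}(k)=\frac1{2\pi}\sum_\ell\Lambda(k,\ell)\,(i\ell)^p\,\widehat v(k-\ell)\,\widehat f(\ell)\,,\qquad\Lambda(k,\ell):=i\,\left(-{\rm sgn}\,k+{\rm sgn}\,\ell\right)\,,
\end{equation*}
with exactly the same symbol $\Lambda$ as in \eqref{valori_lambda}. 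The structural facts I would exploit are that $|\Lambda(k,\ell)|\le2$ for all $(k,\ell)$, that $\Lambda(k,\ell)=0$ whenever $k\ell>0$, and that the index $\ell=0$ contributes nothing because $(i\ell)^p=0$ for $p\ge1$. Hence in the sum above only indices with $k\ell<0$ (and, when $k=0$, arbitrary $\ell\neq0$) survive, and on that set $|\ell|\ge1$ and $|k-\ell|=|k|+|\ell|$, so $|k|\le|k-\ell|$ and $|\ell|\le|k-\ell|$.

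The key elementary estimate is then, on the surviving set of indices,
\begin{equation*}
|k|^m\,|\ell|^p\le\left(|k|+|\ell|\right)^{m+p}=|k-\ell|^{m+p}\,,
\end{equation*}
together with $|\ell|^p\le|\ell|^{m+p}=|{-\ell}|^{m+p}$ for the mode $k=0$ (using $|\ell|\ge1$ and $m\ge1$). With these, arguing exactly as in \eqref{coeff_0}, \eqref{coeff_k+}, I would bound $\big|\widehat{[\mathbb H\,;\,v]\partial_x^pf}(0)\big|\le C\big(|\widehat{\partial_x^{m+p}v}|\ast|\widehat f|\big)(0)$ and, for $k\neq0$, $|k|^m\big|\widehat{[\mathbb H\,;\,v]\partial_x^pf}(k)\big|\le C\big(|\widehat{\partial_x^{m+p}v}|\ast|\widehat f|\big)(k)$, using $|k-\ell|^{m+p}|\widehat v(k-\ell)|=|\widehat{\partial_x^{m+p}v}(k-\ell)|$. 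Splitting the $H^m$-norm as in \eqref{stima_paolo_1} into its $k=0$ part and its $k\neq0$ part, Parseval's identity then gives $\|[\mathbb H\,;\,v]\partial_x^pf\|_{H^m(\mathbb T)}^2\le C_{m,p}\,\big\||\widehat{\partial_x^{m+p}v}|\ast|\widehat f|\big\|_{\ell^2}^2$, and Young's inequality $\|\widehat a\ast\widehat b\|_{\ell^2}\le\|\widehat a\|_{\ell^2}\|\widehat b\|_{\ell^1}$ followed by the inclusion \eqref{imm_sobolev} with $s=1$ applied to $f$ (that is, $\sum_k|\widehat f(k)|\le c_1\|f\|_{H^1(\mathbb T)}$) yields $\|[\mathbb H\,;\,v]\partial_x^pf\|_{H^m(\mathbb T)}\le C_{m,p}\|\partial_x^{m+p}v\|_{L^2(\mathbb T)}\|f\|_{H^1(\mathbb T)}$ for smooth $v,f$, whence the claim by density.

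I do not expect a genuine obstacle here: this is a routine generalization of Lemma~\ref{lemma_comm_paolo}, and the only point deserving care is the case analysis identifying the support of $\Lambda$ and the separate treatment of the Fourier mode $k=0$, so as to make sure that the anti-diagonal inequality $|k|^m|\ell|^p\le|k-\ell|^{m+p}$ is applied only where $k\ell<0$. The gain of the extra $p$ derivatives on $v$, compared with Lemma~\ref{lemma_comm_paolo}, comes precisely from the factor $|\ell|^p$ carried by $\widehat{\partial_x^pf}(\ell)$ being absorbed, on that anti-diagonal, into $|k-\ell|^{m+p}$.
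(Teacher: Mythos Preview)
Your proposal is correct and is precisely the approach the paper intends: the paper itself gives no independent proof of this lemma, stating only that it follows ``by the same arguments as in the case of Lemma~\ref{lemma_comm_paolo}'', and your write-up carries out exactly that adaptation, with the extra factor $(i\ell)^p$ absorbed via $|k|^m|\ell|^p\le|k-\ell|^{m+p}$ on the anti-diagonal support of $\Lambda$.
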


The following product and commutator estimates are a consequence of the well known Gagliardo-Nirenberg inequalities, see f.i. \cite{majda}.
\begin{lemma}\label{lemma_comm_1}
For every integer $m\ge 1$ there exists a constant $C_m>0$ such that the following holds:
\begin{itemize}
\item[i.] for all functions $f\,,v\in H^m(\mathbb T)\cap L^\infty(\mathbb T)$
\begin{equation}
\Vert vf\Vert_{H^m(\mathbb T)}\le C_m\left\{\Vert v\Vert_{L^\infty(\mathbb T)}\Vert f\Vert_{H^m(\mathbb T)}+\Vert v\Vert_{H^m(\mathbb T)}\Vert f\Vert_{L^\infty(\mathbb T)}\right\}\,;\label{stima_prod_4}
\end{equation}
\item[ii.] for all positive integers $k\le m$ and functions $f\,,v\in H^m(\mathbb T)\cap L^\infty(\mathbb T)$
\begin{equation}
\Vert \left[\partial_x^k\,;\,v\right]f\Vert_{L^2(\mathbb T)}\le C_m\left\{\Vert v\Vert_{L^\infty(\mathbb T)}\Vert f\Vert_{H^k(\mathbb T)}+\Vert v\Vert_{H^k(\mathbb T)}\Vert f\Vert_{L^\infty(\mathbb T)}\right\}\,;\label{stima_comm_4}
\end{equation}
\item[iii.] for all positive integers $k\le m$ and functions $f\in H^{m-1}(\mathbb T)\cap L^\infty(\mathbb T)$ and $v\in H^m(\mathbb T)$ such that $v_x\in L^\infty(\mathbb T)$
\begin{equation}
\Vert \left[\partial_x^k\,;\,v\right]f\Vert_{L^2(\mathbb T)}\le C_m\left\{\Vert v_x\Vert_{L^\infty(\mathbb T)}\Vert f\Vert_{H^{k-1}(\mathbb T)}+\Vert v\Vert_{H^k(\mathbb T)}\Vert f\Vert_{L^\infty(\mathbb T)}\right\}\,.\label{stima_comm_5}
\end{equation}
\end{itemize}
\end{lemma}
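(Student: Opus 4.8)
The plan is to derive \eqref{stima_prod_4}--\eqref{stima_comm_5} as instances of the classical Moser (Kato--Ponce) product and commutator inequalities, whose engine on the torus is the combination of the Leibniz rule with the one-dimensional Gagliardo--Nirenberg interpolation inequalities (see \cite{majda}). First I would record the form of the latter that is needed here: for integers $1\le j\le\ell$ and $u\in H^\ell(\mathbb T)\cap L^\infty(\mathbb T)$,
\[
\Vert\partial_x^j u\Vert_{L^{2\ell/j}(\mathbb T)}\le C_\ell\Vert\partial_x^\ell u\Vert_{L^2(\mathbb T)}^{j/\ell}\Vert u\Vert_{L^\infty(\mathbb T)}^{1-j/\ell}+C_\ell\Vert u\Vert_{L^\infty(\mathbb T)}\,,
\]
the additive term appearing because $\mathbb T$ is bounded. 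Since $H^1(\mathbb T)\hookrightarrow L^\infty(\mathbb T)$ yields $\Vert u\Vert_{L^\infty(\mathbb T)}\le C\Vert u\Vert_{H^r(\mathbb T)}$ for every $r\ge1$, such lower-order additive terms will be absorbed into the right-hand sides of \eqref{stima_prod_4}--\eqref{stima_comm_5} (recall $m\ge1$ and $1\le k\le m$).

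Next I would prove \eqref{stima_prod_4}. Expanding $\partial_x^m(vf)=\sum_{j=0}^m\binom{m}{j}\partial_x^j v\,\partial_x^{m-j}f$ by the Leibniz rule, the extreme terms are immediate from H\"older: $\Vert v\,\partial_x^m f\Vert_{L^2}\le\Vert v\Vert_{L^\infty}\Vert f\Vert_{H^m}$ and $\Vert f\,\partial_x^m v\Vert_{L^2}\le\Vert f\Vert_{L^\infty}\Vert v\Vert_{H^m}$. For $1\le j\le m-1$ I would apply H\"older with the conjugate exponents $2m/j$ and $2m/(m-j)$, then the Gagliardo--Nirenberg inequality above with $\ell=m$ on each factor, and finally Young's inequality with exponents $m/j$ and $m/(m-j)$; this converts the geometric mean $(\Vert v\Vert_{L^\infty}\Vert f\Vert_{H^m})^{1-j/m}(\Vert v\Vert_{H^m}\Vert f\Vert_{L^\infty})^{j/m}$ into $C\{\Vert v\Vert_{L^\infty}\Vert f\Vert_{H^m}+\Vert v\Vert_{H^m}\Vert f\Vert_{L^\infty}\}$, while the cross terms produced by the additive Gagliardo--Nirenberg pieces are of lower order and absorbed as above.

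For \eqref{stima_comm_4} the starting point is the cancellation
\[
\left[\partial_x^k\,;\,v\right]f=\partial_x^k(vf)-v\,\partial_x^k f=\sum_{j=1}^k\binom{k}{j}\partial_x^j v\,\partial_x^{k-j}f\,,
\]
where the $j=0$ term has dropped out, so every summand carries at least one $x$-derivative on $v$. Running the same H\"older $+$ Gagliardo--Nirenberg $+$ Young scheme with $\ell=k$ (the term $j=k$ bounded directly by $\Vert\partial_x^k v\Vert_{L^2}\Vert f\Vert_{L^\infty}$, and the case $k=1$ being simply $\Vert v_x f\Vert_{L^2}\le\Vert v\Vert_{H^1}\Vert f\Vert_{L^\infty}$) gives \eqref{stima_comm_4}. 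For \eqref{stima_comm_5} I only want $v_x$ in $L^\infty$, so in the same sum I would write $\partial_x^j v=\partial_x^{j-1}(v_x)$ for $1\le j\le k$, getting $\left[\partial_x^k\,;\,v\right]f=\sum_{j=1}^k\binom{k}{j}\partial_x^{j-1}(v_x)\,\partial_x^{k-j}f$, in which at most $k-1$ derivatives fall on $v_x$ and at most $k-1$ on $f$. Then H\"older with conjugate exponents $2(k-1)/(j-1)$ and $2(k-1)/(k-j)$, Gagliardo--Nirenberg with $\ell=k-1$ (so the top-order $v_x$ norm is $\Vert\partial_x^{k-1}(v_x)\Vert_{L^2}=\Vert\partial_x^k v\Vert_{L^2}\le\Vert v\Vert_{H^k}$), and Young with exponents $(k-1)/(j-1)$ and $(k-1)/(k-j)$ produce $C\{\Vert v_x\Vert_{L^\infty}\Vert f\Vert_{H^{k-1}}+\Vert v\Vert_{H^k}\Vert f\Vert_{L^\infty}\}$; the small cases $k=1$ (where $\left[\partial_x\,;\,v\right]f=v_x f$) and $k=2$ (no interpolation needed) are checked directly.

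I do not expect a genuine obstacle: the whole argument is routine and could essentially be quoted from \cite{majda}. The points that need a little attention are the separate treatment of the endpoint indices $j\in\{0,k\}$ and of small $k$, where certain H\"older exponents degenerate to $1$ or $\infty$; the exact bookkeeping of the Gagliardo--Nirenberg and Young exponents required to turn geometric means into the sums displayed on the right-hand sides; and the verification that the additive lower-order terms intrinsic to Gagliardo--Nirenberg on the compact torus are absorbed, which rests on $H^1(\mathbb T)\hookrightarrow L^\infty(\mathbb T)$ together with the standing assumptions $m\ge1$ and $1\le k\le m$.
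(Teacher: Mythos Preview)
Your proposal is correct and matches the paper's approach: the paper does not give a detailed proof but simply states that these estimates ``are a consequence of the well known Gagliardo--Nirenberg inequalities, see f.i.\ \cite{majda}'', and your Leibniz + H\"older + Gagliardo--Nirenberg + Young scheme is precisely the standard argument behind that reference. In fact you have supplied more detail than the paper does.
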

\begin{lemma}\label{lemma_prod_alg}
For all real numbers $s>1/2$ and $m\in\mathbb N$, the set inclusion $H^s(\mathbb T)\cdot H^m(\mathbb T)\subset H^m(\mathbb T)$ holds with continuous imbedding. In particular, for $m=s\ge1$ the space $H^s(\mathbb T)$ is an algebra for the point-wise product of functions.
\end{lemma}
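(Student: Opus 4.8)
The plan is to reduce the statement to a weighted Young-type inequality for the discrete convolution of Fourier coefficients, the single structural input being that $\sum_{k\in\mathbb Z}(1+|k|)^{-2s}<+\infty$, which holds \emph{exactly} because $s>1/2$. Concretely, I would prove the quantitative estimate
\[
\Vert fg\Vert_{H^m(\mathbb T)}\le C\,\Vert f\Vert_{H^s(\mathbb T)}\,\Vert g\Vert_{H^m(\mathbb T)}
\]
for every integer $m$ with $1\le m\le s$, which is the range of indices occurring in all the applications of the lemma in the paper; the algebra assertion is then the special case $m=s$ (a positive integer, for which $s>1/2$ is automatic), using the symmetry of the two factors.

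First I would recall $\widehat{fg}(k)=\tfrac1{2\pi}(\widehat f\ast\widehat g)(k)$ (formulas \eqref{convoluzione}--\eqref{conv}) and invoke the elementary weight bound $(1+|k|)^m\le 2^m\big((1+|k-\ell|)^m+(1+|\ell|)^m\big)$, which follows from $1+|k|\le(1+|k-\ell|)+(1+|\ell|)$. Inserting this into $(1+|k|)^m|\widehat{fg}(k)|$ splits it into two nonnegative convolution sums: a ``low-high'' sum, where the weight $(1+|\ell|)^m$ is attached to $\widehat g$, and a ``high-low'' sum, where the weight $(1+|k-\ell|)^m$ is attached to $\widehat f$.

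For the low-high sum I would bound the $\ell^2_k$ norm by $\Vert\widehat f\Vert_{\ell^1}\,\Vert(1+|\cdot|)^m\widehat g\Vert_{\ell^2}$ via Young's inequality for sequences; by Parseval $\Vert(1+|\cdot|)^m\widehat g\Vert_{\ell^2}=\sqrt{2\pi}\,\Vert g\Vert_{H^m(\mathbb T)}$, while Cauchy--Schwarz gives $\Vert\widehat f\Vert_{\ell^1}\le\big(\sum_k(1+|k|)^{-2s}\big)^{1/2}\big(\sum_k(1+|k|)^{2s}|\widehat f(k)|^2\big)^{1/2}=C_s\sqrt{2\pi}\,\Vert f\Vert_{H^s(\mathbb T)}$, the first factor being finite thanks to $s>1/2$. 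For the high-low sum I would instead bound the $\ell^2_k$ norm by $\Vert(1+|\cdot|)^m\widehat f\Vert_{\ell^2}\,\Vert\widehat g\Vert_{\ell^1}$; here $\Vert(1+|\cdot|)^m\widehat f\Vert_{\ell^2}=\sqrt{2\pi}\,\Vert f\Vert_{H^m(\mathbb T)}\le\sqrt{2\pi}\,\Vert f\Vert_{H^s(\mathbb T)}$ since $m\le s$, and $\Vert\widehat g\Vert_{\ell^1}\le C_m\sqrt{2\pi}\,\Vert g\Vert_{H^m(\mathbb T)}$ again by Cauchy--Schwarz, this time using $m\ge1>1/2$. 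Squaring, summing over $k$ and applying Parseval on the left then yields the claimed inequality; in particular, taking $m=s$ gives the Banach algebra property of $H^s(\mathbb T)$.

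I do not expect a genuine difficulty here: the entire content is the bookkeeping that distributes the weight $(1+|k|)^m$ so that neither factor is asked for more derivatives than it carries, which is precisely what pins down the two regimes $m\le s$ (weight on $\widehat f$) and $m\ge1$ (so that $\widehat g\in\ell^1$). As an alternative, when $s$ is itself a positive integer one can bypass the Fourier computation entirely: apply the Moser-type product estimate \eqref{stima_prod_4} of Lemma \ref{lemma_comm_1} with $v=f$, and conclude using $\Vert f\Vert_{H^m(\mathbb T)}\le\Vert f\Vert_{H^s(\mathbb T)}$ together with the Sobolev embeddings $H^s(\mathbb T)\hookrightarrow L^\infty(\mathbb T)$ and $H^m(\mathbb T)\hookrightarrow L^\infty(\mathbb T)$, which are legitimate since $s>1/2$ and $m\ge1$.
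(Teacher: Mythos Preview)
The paper does not actually prove this lemma: it is stated without argument in the appendix, immediately after Lemma~\ref{lemma_comm_1}, and is treated as a standard fact (in the same spirit as the Gagliardo--Nirenberg product estimates quoted there). So there is no ``paper's proof'' to compare against; your Fourier-side argument is the canonical one and is correct as written.

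One remark worth making explicit: your restriction to $1\le m\le s$ is not merely a convenience matching the applications, it is \emph{necessary}. The lemma as literally printed---``for all $m\in\mathbb N$''---is false without the constraint $m\le s$: take $g\equiv 1\in H^m(\mathbb T)$ for every $m$, and any $f\in H^s(\mathbb T)\setminus H^m(\mathbb T)$ with $1/2<s<m$; then $fg=f\notin H^m(\mathbb T)$. So you have in fact proved the correct statement, and the paper's formulation is slightly imprecise. The missing endpoint $m=0$ (not covered by your Cauchy--Schwarz on $\widehat g$, which needs $m>1/2$) follows immediately from $H^s(\mathbb T)\hookrightarrow L^\infty(\mathbb T)$ and H\"older. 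Your alternative route via \eqref{stima_prod_4} for integer $s$ is also fine.
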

\begin{lemma}\label{lemma_comm_2}
For every integer $m\ge 1$ there exists a positive constant $C_m$ such that for all functions $v\in H^{m+1}(\mathbb T)$, $f\in H^2(\mathbb T)$
\begin{equation}\label{stima_comm_6}
\left\Vert \left[\mathbb H\,;\,\left[\partial_x^m\,;\,v\right]\right]\partial^2_x f\right\Vert_{L^2(\mathbb T)}\le C_m\Vert v_x\Vert_{H^m(\mathbb T)}\Vert f_x\Vert_{H^1(\mathbb T)}\,.
\end{equation}
\end{lemma}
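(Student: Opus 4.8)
The plan is to argue on the Fourier side, exactly as in the proof of \eqref{stima_comm_3} and of Lemma~\ref{lemma_comm_paolo}: represent the operator applied to $f$ as a discrete convolution against the Fourier coefficients of $v$, estimate the resulting kernel pointwise, and close with Parseval's identity and Young's inequality for convolutions. First I would record the symbol of the inner commutator: combining the convolution formulas \eqref{convoluzione}, \eqref{conv} with the Fourier multiplier $(ik)^m$ of $\partial_x^m$, one gets, for every sufficiently smooth $g$,
\begin{equation*}
\widehat{[\partial_x^m\,;\,v]g}(k)=\frac1{2\pi}\sum_{\ell}\bigl((ik)^m-(i\ell)^m\bigr)\,\widehat v(k-\ell)\,\widehat g(\ell)\,,\qquad k\in\mathbb Z\,.
\end{equation*}
Commuting with $\mathbb H$ (symbol $-i\,{\rm sgn}\,k$, see \eqref{hilbert1}) and then setting $g=\partial_x^2f$, so that $\widehat g(\ell)=-\ell^2\widehat f(\ell)$, one is led to
\begin{equation*}
\widehat{\bigl[\mathbb H\,;\,[\partial_x^m\,;\,v]\bigr]\partial_x^2f}(k)=\frac1{2\pi}\sum_{\ell}\Lambda(k,\ell)\,\widehat v(k-\ell)\,\widehat f(\ell)\,,\qquad |\Lambda(k,\ell)|=\ell^2\,\bigl|(ik)^m-(i\ell)^m\bigr|\,\bigl|{\rm sgn}\,k-{\rm sgn}\,\ell\bigr|\,.
\end{equation*}

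The heart of the argument is then the pointwise kernel bound
\begin{equation*}
|\Lambda(k,\ell)|\le 2\,(1+|k-\ell|)^{m}\,(1+|\ell|)\,|\ell|\,,\qquad\forall\,(k,\ell)\in\mathbb Z^2\,,
\end{equation*}
which I would prove by a short case analysis: $\Lambda$ vanishes whenever $k$ and $\ell$ have the same sign (then ${\rm sgn}\,k={\rm sgn}\,\ell$) and whenever $\ell=0$ (then $\ell^2=0$); in the remaining cases $k$ and $\ell$ have strictly opposite signs, or $k=0$, so that $|k-\ell|=|k|+|\ell|$ (resp.\ $|k-\ell|=|\ell|$), and the elementary inequality $|k|^m+|\ell|^m\le(|k|+|\ell|)^m$, valid for $m\ge1$, gives $|(ik)^m-(i\ell)^m|\le|k-\ell|^m\le(1+|k-\ell|)^m$, while $|{\rm sgn}\,k-{\rm sgn}\,\ell|\le2$ and $\ell^2\le(1+|\ell|)|\ell|$. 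Two structural features of $\Lambda$ are worth flagging, because they are exactly what make the conclusion sharp: $\Lambda$ vanishes on the diagonal $k=\ell$, so the coefficient $\widehat v(0)$ never enters the sum (hence no zero-mean hypothesis on $v$ is needed), and $\Lambda$ carries precisely the weights $(1+|\ell|)|\ell|$, which are those defining $\Vert f_x\Vert_{H^1(\mathbb T)}$.

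With the kernel bound in hand, I would take absolute values and recognise the right-hand side as $\tfrac1{\pi}(\tilde a\ast b)(k)$, where $\tilde a(j):=(1+|j|)^m|\widehat v(j)|$ for $j\neq0$, $\tilde a(0):=0$, and $b(\ell):=(1+|\ell|)|\ell|\,|\widehat f(\ell)|$; then Parseval's identity and Young's inequality $\ell^1\ast\ell^2\hookrightarrow\ell^2$ give $\Vert[\mathbb H\,;\,[\partial_x^m\,;\,v]]\partial_x^2f\Vert_{L^2(\mathbb T)}\le C\Vert\tilde a\Vert_{\ell^1}\Vert b\Vert_{\ell^2}$. By definition of the Sobolev norm \eqref{normaHs} one has $\Vert b\Vert_{\ell^2}=\Vert f_x\Vert_{H^1(\mathbb T)}$, while $\Vert\tilde a\Vert_{\ell^1}\le C\Vert v_x\Vert_{H^m(\mathbb T)}$ follows from the Cauchy--Schwarz split $\tilde a(j)=(1+|j|)^{-1}\cdot(1+|j|)^{m+1}|\widehat v(j)|$ together with the convergence of $\sum_{j\neq0}(1+|j|)^{-2}$ and the inequality $(1+|j|)^2\le4j^2$ for $j\neq0$ (equivalently one may invoke \eqref{imm_sobolev} with $s=1$). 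This proves \eqref{stima_comm_6}.

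I expect the only genuinely delicate point to be keeping the kernel estimate \emph{sharp in $v$}, i.e.\ landing on the exponent $m$ rather than $m+1$ on $(1+|k-\ell|)$: with $m+1$ one would end up with $\Vert v_x\Vert_{H^{m+1}(\mathbb T)}$ on the right, one derivative too many to be of any use in the tame estimate \eqref{stima_tame_1}. The vanishing of $\Lambda$ on the diagonal (which removes the troublesome $\widehat v(0)$) and the fact that it carries exactly the $\Vert f_x\Vert_{H^1}$-weights are the two observations that make the clean bound $\Vert v_x\Vert_{H^m}\Vert f_x\Vert_{H^1}$ attainable; everything else is routine.
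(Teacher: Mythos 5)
Your proof is correct and follows essentially the same route as the paper's: passing to the Fourier side, computing the convolution kernel of $[\mathbb H\,;\,[\partial_x^m\,;\,v]]\partial_x^2$, exploiting its vanishing when ${\rm sgn}\,k={\rm sgn}\,\ell$, bounding the surviving factor $|k^m-\ell^m|$ by $(|k|+|\ell|)^m=|k-\ell|^m$, and closing with Parseval and Young. The only cosmetic difference is that the paper splits into the cases $k>0$, $k<0$, $k=0$ and invokes the binomial expansion of $k^m-\ell^m$, whereas you give a single uniform kernel estimate via the elementary inequality $|k|^m+|\ell|^m\le(|k|+|\ell|)^m$; both yield the same bound.
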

\begin{proof}
We proceed again as in the proof of Lemma \ref{lemma_comm}, by computing explicitly the Fourier coefficients of the function $\left[\mathbb H\,;\,\left[\partial_x^m\,;\,v\right]\right]\partial^2_x f$. For every integer $k$ we find
\begin{equation}\label{stima_comm_6_1}
\begin{split}
\left(\left[\mathbb H\,;\,\left[\partial_x^m\,;\,v\right]\right]\partial_x^2 f\right)^{\wedge}(k)=\bigg(\mathbb H\left[\left[\partial_x^m\,;\,v\right]\partial^2_x f\right]-\left[\partial_x^m\,;\,v\right]\mathbb H\left[\partial^2_x f\right]\bigg)^{\wedge}(k)\\
=-i{\rm sgn}\,k\,\left(\left[\partial_x^m\,;\,v\right]\partial^2_x f\right)^{\wedge}(k)-\left(\left[\partial_x^m\,;\,v\right]\partial_x^2\mathbb H\left[f\right]\right)^{\wedge}(k)\,.
\end{split}
\end{equation}
Let us first develop the Fourier coefficients of $\left[\partial_x^m\,;\,v\right]\partial^2_x f$; we compute
\begin{equation*}
\begin{split}
&\left(\left[\partial_x^m\,;\,v\right]\partial^2_x f\right)^{\wedge}(k)=\left(\partial_x^m\left(v\partial_x^2 f\right)-v\partial^{m+2}_x f\right)^{\wedge}(k)\\
&\qquad=(ik)^m\left(v\partial_x^2 f\right)^{\wedge}(k)-\frac1{2\pi}\sum\limits_{\ell}\widehat{v}(k-\ell)\left(\partial^{m+2}_x f\right)^{\wedge}(\ell)\\
&\qquad=\frac1{2\pi}(ik)^m\sum\limits_{\ell}\widehat{v}(k-\ell)(i\ell)^2\widehat{f}(\ell)-\frac1{2\pi}\sum\limits_{\ell}\widehat{v}(k-\ell)(i\ell)^{m+2}\widehat{f}(\ell)\\
&\qquad=-\frac1{2\pi}i^m\sum\limits_{\ell}\ell^2(k^m-\ell^m)\widehat{v}(k-\ell)\widehat{f}(\ell)=-\frac1{2\pi}i^m\sum\limits_{\ell\neq 0}\ell^2(k^m-\ell^m)\widehat{v}(k-\ell)\widehat{f}(\ell)\,.
\end{split}
\end{equation*}
Substituting the above expression in \eqref{stima_comm_6_1} we get
\begin{equation*}%\label{stima_comm_6_2}
\begin{split}
&\left(\left[\mathbb H\,;\,\left[\partial_x^m\,;\,v\right]\right]\partial_x^2 f\right)^{\wedge}(k)=\\
&\qquad =\frac{i^{m+1}}{2\pi}{\rm sgn}\,k\,\sum\limits_{\ell\neq 0}\ell^2(k^m-\ell^m)\widehat{v}(k-\ell)\widehat{f}(\ell)      +\frac1{2\pi}i^m\sum\limits_{\ell\neq 0}\ell^2(k^m-\ell^m)\widehat{v}(k-\ell)\widehat{\mathbb H\left[f\right]}(\ell)\\
&\qquad =\frac{i^{m+1}}{2\pi}{\rm sgn}\,k\,\sum\limits_{\ell\neq 0}\ell^2(k^m-\ell^m)\widehat{v}(k-\ell)\widehat{f}(\ell)      -\frac{i^{m+1}}{2\pi}\sum\limits_{\ell\neq 0}\ell^2(k^m-\ell^m)\widehat{v}(k-\ell)\,{\rm sgn}\,\ell\,\widehat{f}(\ell)\\
&\qquad =\frac{i^{m+1}}{2\pi}\sum\limits_{\ell\neq 0}\left({\rm sgn}\,k-{\rm sgn}\,\ell\right)\ell^2(k^m-\ell^m)\widehat{v}(k-\ell)\,\widehat{f}(\ell)\,.
\end{split}
\end{equation*}
In view of \eqref{segno} we have for $k>0$
\begin{equation*}\label{casi+}
{\rm sgn}\,k-{\rm sgn}\,\ell=\begin{cases}0\,,\qquad\mbox{if}\,\,\ell>0\,,\\ 2\,,\qquad\mbox{if}\,\,\ell<0\,;\end{cases}
\end{equation*}
for $k=0$
\begin{equation*}\label{casi0}
{\rm sgn}\,k-{\rm sgn}\,\ell=\begin{cases}-1\,,\qquad\mbox{if}\,\,\ell>0\,,\\ 1\,,\qquad\mbox{if}\,\,\ell<0\,;\end{cases}
\end{equation*}
for $k<0$
\begin{equation*}\label{casi-}
{\rm sgn}\,k-{\rm sgn}\,\ell=\begin{cases}-2\,,\qquad\mbox{if}\,\,\ell>0\,,\\ 0\,,\qquad\mbox{if}\,\,\ell<0\,.\end{cases}
\end{equation*}
Consequently we obtain
\begin{equation}\label{stima_comm_6_3}
\begin{split}
\left(\left[\mathbb H\,;\,\left[\partial_x^m\,;\,v\right]\right]\partial_x^2 f\right)^{\wedge}(k)=\begin{cases}\displaystyle\frac{i^{m+1}}{\pi}\sum\limits_{\ell<0}\ell^2(k^m-\ell^m)\widehat{v}(k-\ell)\,\widehat{f}(\ell)\,,\quad\mbox{if}\,\,k>0\,,\\ \displaystyle-\frac{i^{m+1}}{\pi}\sum\limits_{\ell>0}\ell^2(k^m-\ell^m)\widehat{v}(k-\ell)\,\widehat{f}(\ell)\,,\quad\mbox{if}\,\,k<0\,,\\
\displaystyle\frac{i^{m+1}}{2\pi}\sum\limits_{\ell\neq 0}{\rm sgn}\,\ell\,\ell^{m+2}\widehat{v}(-\ell)\,\widehat{f}(\ell)\,,\quad\mbox{if}\,\,k=0\,.\end{cases}
\end{split}
\end{equation}
In view of the above formulas, for $k=0$ we immediately get:
\begin{equation*}\label{stima_comm_6_4}
\begin{split}
&\left\vert\left(\left[\mathbb H\,;\,\left[\partial_x^m\,;\,v\right]\right]\partial_x^2 f\right)^{\wedge}(0)\right\vert\le\frac1{2\pi}\sum\limits_{\ell\neq 0}\vert(-i\ell)^m\widehat{v}(-\ell)\vert\vert(i\ell)^2\widehat{f}(\ell)\vert\\
&\quad =\frac1{2\pi}\sum\limits_{\ell\neq 0}\vert\widehat{\partial_x^m v}(-\ell)\vert\widehat{\partial_x^2 f}(\ell)\vert\le\frac1{2\pi}\left(\vert\widehat{\partial_x^mv}\vert\ast\vert\widehat{\partial^2_x f}\vert\right)(0)\,.
\end{split}
\end{equation*}
%(or even $\left\vert\left(\left[\mathbb H\,;\,\left[\partial_x^m\,;\,v\right]\right]\partial_x^2 f\right)^{\wedge}(0)\right\vert\le\frac1{2\pi}\left(\vert\widehat{\partial_x^{m+2}v}\vert\ast\vert\widehat{f}\vert\right)(0)$).

If $k>0$ we expand the factor $(k^m-\ell^m)$ in the corresponding expression in \eqref{stima_comm_6_3} by Newton's formula to get
\begin{equation*}\label{stima_comm_6_5}
\begin{split}
&\left(\left[\mathbb H\,;\,\left[\partial_x^m\,;\,v\right]\right]\partial_x^2 f\right)^{\wedge}(k)=\frac{i^{m+1}}{\pi}\sum\limits_{\ell<0}\ell^2(k^m-\ell^m)\widehat{v}(k-\ell)\,\widehat{f}(\ell)\\
&\quad=\frac{i^{m+1}}{\pi}\sum\limits_{\ell<0}\left(\sum\limits_{h=0}^{m-1}\binom{m}{h}(k-\ell)^{m-h}\ell^h\right)\widehat{v}(k-\ell)\,\ell^2\,\widehat{f}(\ell)\,.
\end{split}
\end{equation*}
For $k>0$ and $\ell<0$ one has $0<\vert\ell\vert=-\ell\le k-\ell$, hence there exists $C>0$, depending only on $m$, such that
\begin{equation*}
\left\vert\sum\limits_{h=0}^{m-1}\binom{m}{h}(k-\ell)^{m-h}\ell^h\right\vert\le C(k-\ell)^{m}
\end{equation*}
and
\begin{equation*}\label{stima_comm_6_6}
\begin{split}
&\left\vert\left(\left[\mathbb H\,;\,\left[\partial_x^m\,;\,v\right]\right]\partial_x^2 f\right)^{\wedge}(k)\right\vert\le\frac{C}{\pi}\sum\limits_{\ell<0}\vert (i(k-\ell))^m\widehat{v}(k-\ell)\vert\,\vert\widehat{(i\ell)^2 f}(\ell)\vert\le\frac{C}{\pi}\left(\vert\widehat{\partial_x^m v}\vert\ast\vert\widehat{\partial_x^2 f}\vert\right)(k)\,.
\end{split}
\end{equation*}
%(or even $\left\vert\left(\left[\mathbb H\,;\,\left[\partial_x^m\,;\,v\right]\right]\partial_x^2 f\right)^{\wedge}(k)\right\vert\le\frac{C}{\pi}\left(\vert\widehat{\partial_x^{m+2} v}\vert\ast\vert\widehat{f}\vert\right)(k)$).

If $k<0$ and $\ell>0$ we have again $\vert\ell\vert=\ell<\ell-k=\vert k-\ell\vert$ and all the arguments used in the case $k>0$ can be repeated to get the same estimates as above for the Fourier coefficients $\left(\left[\mathbb H\,;\,\left[\partial_x^m\,;\,v\right]\right]\partial_x^2 f\right)^{\wedge}(k)$.

Then  we end the proof of the estimate \eqref{stima_comm_6} by arguing as in the proof of Lemma \ref{lemma_comm}, by using Parseval and Young's inequalities.
\end{proof}

\section{Nash-Moser's theorem}\label{sec_N-M}
In this section, for reader's convenience, we report the assumptions and the Nash-Moser's theorem in \cite{secchi-nash} (we adopt the same notation in \cite{secchi-nash}) that we apply in order to get the existence of the solution of the nonlinear problem \eqref{cp}.

\smallskip

Let $\{X_m\}_{m\geq 0}$ and $\{Y_m\}_{m\geq 0}$ be two decreasing families of Banach spaces, each satisfying the {\it smoothing hypothesis} (see \cite{secchi-nash}).  Let $\mathcal L:X_m\rightarrow Y_m$ be twice differentiable for every $m\geq 0$,  with $\mathcal L (0)=0$.
\medskip

{\bf Assumption 2.1:} For all $u\in U\cap X_{\infty}$, where $U$ is a bounded open neighborhood of $0$ in $X_{m_0}$ for some $m_0\geq 0$, the function $\mathcal L:X_m\rightarrow Y_m$ satisfies the tame estimate
\begin{equation*}\label{assumption2.1}
\Vert d^2\mathcal L (u)(v_1,v_2)\Vert_{Y_m} \leq C\left( \Vert v_1\Vert_{X_{m+r}} \Vert v_2\Vert_{X_{m_0}} + \Vert v_1\Vert_{X_{m_0}} \Vert v_2\Vert_{X_{m+r}} +\Vert v_1\Vert_{X_{m_0}} \Vert v_2\Vert_{X_{m_0}} \left(1+\Vert u\Vert_{X_{m+r^\prime}} \right)\right)\,,
\end{equation*}
for all $m\geq 0$ and for all $v_1,v_2\in X_{\infty}$, for some fixed integers $r,r^\prime \geq 0$. The constant $C$ is bounded for $m$ bounded.

\medskip

{\bf Assumption 2.2:} For all $u\in U\cap X_{\infty}$, there exists a linear mapping $\Psi(u):Y_\infty \rightarrow X_\infty$ such that $d\mathcal L(u)\Psi(u)=Id$ and satisfying the tame estimate
\begin{equation*}\label{assumtion2.2}
\Vert\Psi(u)g\Vert_{X_m}\leq C\left( \Vert g\Vert_{Y_{m+s}} + \Vert g\Vert_{Y_{m_0}}\Vert u\Vert_{X_{m+s^\prime}}\right)
\end{equation*}
for all $m\geq 0$ and  some fixed integers $s,s^\prime \geq 0$. The constant $C$ is bounded for $m$ bounded.

\medskip

The Nash-Moser's theorem requires a family of smoothing operators (see \cite[Definition 2.3]{secchi-nash}). We recall here the statement of \cite[Theorem 2.4]{secchi-nash} which is a suitable version of Nash-Moser's theorem.
\medskip

\begin{theorem}\label{N-M_paolo}
Let $\{X_m\}_{m\geq 0}$ and $\{Y_m\}_{m\geq 0}$ be two decreasing families of Banach spaces, each satisfying the smoothing hypothesis, and assume that both Assumptions 2.1 and 2.2 hold. Let $m^\prime$ be a positive integer such that $m^\prime\geq m_0+\max{\{r,r^\prime\}}+ \max{\{s,s^\prime\}}$.
\begin{itemize}
\item[(i)] There exists a constant $0<\varepsilon \leq 1$ such that if $f\in Y_{m^\prime+s+1}$ with
    \begin{equation*}\label{norma-piccola}
    \Vert f\Vert_{Y_{m^\prime+s+1}}\le\varepsilon,
  \end{equation*}
the equation $\mathcal L(u)=f$ has a solution $u\in X_{m^\prime}$, in the sense that there exists a sequence $\{u_n\}\subset X_\infty$ such that $u_n\rightarrow u$ in $X_{m^\prime}$, $\mathcal L(u_n) \rightarrow f$ in $Y_{m^\prime+s}$, as $n\rightarrow \infty$.
\item[(ii)] If (i) holds and if there exists $m^{\prime\prime}>m^\prime$ such that $f\in Y_{m^{\prime\prime}+s+1}$, then the solution constructed $u\in X_{m^{\prime\prime}}$.
    \end{itemize}
\end{theorem}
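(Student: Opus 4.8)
The plan is to reformulate the Cauchy problem \eqref{cp} as an abstract operator equation and to solve it by the Nash--Moser iteration of Theorem \ref{N-M_paolo}. The loss of two spatial derivatives in the tame estimate \eqref{stima_tame_1} (from the coefficient $\varphi_0$ to the solution $\varphi^\prime$) rules out a contraction argument or the implicit function theorem, so a Nash--Moser scheme is genuinely needed. First I would fix a lifting $\varphi^a$ of the data $\varphi^{(0)},\varphi^{(1)}$ as in \eqref{reg_phia}, so that the sign condition holds with the safety margin $3\delta/4$, and set $F^a$ as in \eqref{Fa}. By Lemma \ref{reg_Fa} and Remark \ref{rmk:1}, $F^a$ belongs to $Y_{\nu-1}$ whenever $\varphi^{(0)}\in H^{\nu+1}(\mathbb T)$, $\varphi^{(1)}\in H^{\nu}(\mathbb T)$, with a quadratic bound vanishing at the origin; moreover, since $F^a$ vanishes for $t<0$, its $L^2_\gamma(-\infty,T;H^{\nu-1}(\mathbb T))$-norm can be made as small as desired by shrinking $T$ (Remark \ref{rmk:2}). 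Writing the sought solution as $\varphi=\varphi^a+\varphi^\prime$ with $\varphi^\prime|_{t<0}=0$, the problem becomes $\mathcal L[\varphi^\prime]=F^a$ on $(-\infty,T]\times\mathbb T$ with $\mathcal L$ as in \eqref{oprt_L}; the zero initial data \eqref{ic} are then automatic from the continuity in time built into the spaces $X_m$ (Remark \ref{rmk:3}).

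Next I would verify the two structural hypotheses of Theorem \ref{N-M_paolo} for $\mathcal L:X_m\to Y_m$ on the neighbourhood $\mathcal U$ of $0$ in $X_4$ defined in \eqref{neighbrhd}, using the identities \eqref{diffL}. For Assumption~2.2, the existence of a right inverse of $d\mathcal L[\varphi_0]=\mathbb L^\prime[\varphi^a+\varphi_0]$ follows from Theorem \ref{teorema_2} applied to the time-extended equation \eqref{ext_eq} and then restricted to $(-\infty,T]$ via the extension estimates \eqref{ext_cont}; the tame bound is obtained by combining the higher-order estimate \eqref{stima_tame_1} of Proposition \ref{prop_stima_tame} for $\varphi^\prime_t,\varphi^\prime_x$ with the estimate \eqref{stima_dm-1phitt} of Proposition \ref{prop_stima_tame_2} for $\varphi^\prime_{tt}$, after bounding the coefficient norms $\Vert\varphi^a+\tilde\varphi_0\Vert_{L^\infty(\mathbb R;H^5(\mathbb T))}$ and $\Vert\varphi^a_t+\tilde\varphi_{0,t}\Vert_{L^\infty(\mathbb R;H^2(\mathbb T))}$ by a data-dependent constant on $\mathcal U$ (inequalities \eqref{CX.1}) and checking that the sign condition \eqref{sign_phia_phi0} holds on $\mathcal U$ (inequalities \eqref{sgn_restr}). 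This yields \eqref{stima_phiprimeXm_fine2}, that is Assumption~2.2 with $m_0=4$, $s=1$, $s^\prime=3$ and constants bounded for bounded $m$. For Assumption~2.1, the explicit form of $\mathbb L^{\prime\prime}$ computed in Section \ref{sec_2der} together with the tame estimate \eqref{stima_der2_T} of Proposition \ref{prop_2der} (restated on $(-\infty,T]$, Remark \ref{rem_der2}) gives the required bound with $r=r^\prime=0$ and $m_0=4$.

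Finally I would apply Theorem \ref{N-M_paolo} with $m^\prime=m_0+\max\{r,r^\prime\}+\max\{s,s^\prime\}=4+0+3=7$. Part~(i) requires $F^a\in Y_{m^\prime+s+1}=Y_9$ with small norm: the assumptions $\varphi^{(0)}\in H^{11}(\mathbb T)$, $\varphi^{(1)}\in H^{10}(\mathbb T)$ give $F^a\in Y_9$, and smallness is achieved by choosing $T$ small in terms of $\Vert\varphi^{(0)}\Vert_{H^{11}(\mathbb T)}$, $\Vert\varphi^{(1)}\Vert_{H^{10}(\mathbb T)}$ and $\delta$ only. This produces $\varphi^\prime\in X_7$, whence by \eqref{Xm} and the regularity \eqref{reg_phia} of $\varphi^a$ the function $\varphi=\varphi^a|_{[0,T]}+\varphi^\prime|_{[0,T]}$ lies in $L^2(0,T;H^9(\mathbb T))\cap H^1(0,T;H^8(\mathbb T))\cap H^2(0,T;H^7(\mathbb T))$ and solves \eqref{cp}. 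The bound $\mu-2\mathbb H[\varphi]_x\ge\delta/2$ on $[0,T]\times\mathbb T$ follows from $\eqref{reg_phia}_3$, the embedding $H^1_\gamma\hookrightarrow L^\infty$ and the smallness of $\Vert\varphi^\prime\Vert_{X_1}$ (shrinking $T$ further if needed), and uniqueness follows from a direct energy estimate on the difference of two solutions, of the type proved in Theorem \ref{teorema_1}. For statement~(2), part~(ii) of Theorem \ref{N-M_paolo} with $m^{\prime\prime}=\nu-3$ applies provided $\nu-1=m^{\prime\prime}+s+1>9$, i.e. $\nu>10$, and gives $\varphi^\prime\in X_{\nu-3}$, hence $\varphi\in L^2(0,T;H^{\nu-1}(\mathbb T))\cap H^1(0,T;H^{\nu-2}(\mathbb T))\cap H^2(0,T;H^{\nu-3}(\mathbb T))$ on the \emph{same} interval, since $T$ is fixed only through the $Y_9$-smallness of $F^a$.

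\textbf{Main obstacle.} Granting the results of Sections \ref{equazione_lineare}--\ref{sec_2der}, the delicate point in this final step is the bookkeeping required to verify Assumption~2.2: one must run the tame estimates on the whole line, transfer them to $(-\infty,T]$ through the extension operators \eqref{ext_cont} without spoiling the tame structure, and check that the thresholds $\widehat\gamma_1$ and the constants $\widehat C_1$ of \eqref{Cgamma} depend only on $m$, $\delta$ and the initial data (boundedly for bounded $m$), so that $\gamma$ may be frozen independently of $m$. Conceptually, of course, the real work lies upstream, in the loss-free $L^2$ energy estimate \eqref{stima_apriori_1} of Theorem \ref{teorema_1} and in the Hilbert-transform commutator lemmas of \ref{stima_commutatore}, on which the whole construction rests.
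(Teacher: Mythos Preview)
Your proposal does not prove the stated theorem. Theorem \ref{N-M_paolo} is the \emph{abstract} Nash--Moser theorem: given two scales of Banach spaces with smoothing operators and an operator $\mathcal L$ satisfying Assumptions~2.1 and~2.2, the equation $\mathcal L(u)=f$ is solvable for small $f$. The paper does not prove this result at all; it is simply quoted from \cite{secchi-nash} (Theorem~2.4 there) for the reader's convenience. A proof would require constructing the Nash--Moser iteration scheme itself: defining approximants $u_{n+1}=u_n+\Psi(S_{\theta_n}u_n)\big(S_{\theta_n}(f-\mathcal L(u_n))\big)$ (or a variant thereof), using the smoothing operators $S_\theta$ to compensate the derivative losses quantified by $r,r',s,s'$, and proving convergence via careful interpolation between the scales.

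What you have written is instead a correct and detailed sketch of the proof of Theorem~\ref{nonlin_th}, the main well-posedness result of the paper, which \emph{applies} Theorem~\ref{N-M_paolo} as a black box. Indeed, your outline matches the paper's own argument in Section~\ref{nlpb} (in particular Subsection~\ref{proof-theorem}) essentially step for step: the reduction to $\mathcal L[\varphi']=F^a$, the verification of Assumptions~2.1 and~2.2 with $m_0=4$, $r=r'=0$, $s=1$, $s'=3$, the computation $m'=7$, and the smallness of $\|F^a\|_{Y_9}$ obtained by shrinking $T$. But none of this is a proof of the abstract iteration theorem; you are using it, not establishing it.
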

%%%%%%%%%%%%%%%%%%%%%%%%%%%%%%%%%%%%

%\bibliographystyle{elsarticle-harv}
%\bibliography{ondeH}

\end{document}